\theoremstyle{definition}
\newtheorem{thm}{Theorem}[section]
\newtheorem{lem}[thm]{Lemma}
\newtheorem{prop}[thm]{Proposition}
\newtheorem{cor}[thm]{Corollary}
\newtheorem*{rmk}{Remark}
\newcommand{\R}{\mathbb{R}}
\newcommand{\N}{\mathbb{N}}
\newcommand{\T}{\mathbb{T}}
\newcommand{\TP}{\overline{\partial}{}}
\newcommand{\TL}{\overline{\Delta}{}}
\newcommand{\tpl}{\overline{\partial}^2\overline{\Delta}}
\newcommand{\curl}{\text{curl }}
\newcommand{\dive}{\text{div }}
\newcommand{\q}{\quad}
\newcommand{\p}{\partial}
\newcommand{\dd}{\mathfrak{D}}
\newcommand{\DD}{\mathcal{D}}
\newcommand{\nab}{\nabla}
\newcommand{\ak}{\tilde{a}}
\newcommand{\Ak}{\tilde{A}}
\newcommand{\ek}{\tilde{\eta}}
\newcommand{\ar}{\mathring{a}}
\newcommand{\ark}{\mathring{\tilde{{a}}}}
\newcommand{\Ark}{\mathring{\tilde{{A}}}}
\newcommand{\Jrk}{\mathring{\tilde{{J}}}}
\newcommand{\er}{\mathring{\eta}}
\newcommand{\qr}{\mathring{q}}
\newcommand{\erk}{\mathring{\tilde{{\eta}}}}
\newcommand{\vr}{\mathring{v}}
\newcommand{\br}{\mathring{b}}
\newcommand{\Jr}{\mathring{J}}
\newcommand{\Jk}{\tilde{J}}
\newcommand{\psir}{\mathring{\psi}}
\newcommand{\park}{\nabla_{\mathring{\tilde{a}}}}
\newcommand{\divr}{\text{div}_{\mathring{\tilde{a}}}}
\newcommand{\curlr}{\text{curl}_{\mathring{\tilde{a}}}}
\newcommand{\pa}{\nabla_a}
\newcommand{\pak}{\nabla_{\tilde{a}}}
\newcommand{\diva}{\text{div}_{\tilde{a}}}
\newcommand{\curla}{\text{curl}_{\tilde{a}}}
\newcommand{\lkk}{\Lambda_{\kk}}
\newcommand{\lapak}{\Delta_{\tilde{a}}}
\newcommand{\lapark}{\Delta_{\mathring{\tilde{a}}}}
\newcommand{\di}{\text{div}\,}
\newcommand{\cp}{\overline{\partial}{}}
\newcommand{\dx}{\,dx}
\newcommand{\dy}{\,dy}
\newcommand{\dz}{\,dz}
\newcommand{\dt}{\,dt}
\newcommand{\dS}{\,dS}
\newcommand{\kk}{\kappa}
\newcommand{\EE}{\mathcal{E}}
\newcommand{\XX}{\mathbf{X}}
\newcommand{\VV}{\mathbf{V}}
\newcommand{\QQ}{\mathbf{Q}}
\newcommand{\GG}{\mathbf{G}}
\newcommand{\FF}{\mathbf{F}}
\newcommand{\VVr}{\mathring{\mathbf{V}}}
\newcommand{\QQr}{\mathring{\mathbf{Q}}}
\newcommand{\FFr}{\mathring{\mathbf{F}}}
\newcommand{\w}{\widetilde}
\newcommand{\PP}{\mathcal{P}}
\newcommand{\wt}{\rho'(p)}
\newcommand{\idt}{\int_{\mathcal{D}_t}}
\newcommand{\ipdt}{\int_{\partial\mathcal{D}_t}}
\newcommand{\io}{\int_{\Omega}}
\newcommand{\ig}{\int_{\Gamma}}
\numberwithin{equation}{section}
\begin{document}

\title{\textbf{Local Well-posedness of the Free-Boundary Problem in Compressible Resistive Magnetohydrodynamics}}
\author{{\sc Junyan Zhang}\\{\footnotesize Department of Mathematics, National University of Singapore,}\\{\footnotesize 10 Lower Kent Ridge Road, Singapore 119076.}\\{\footnotesize Email: \texttt{zhangjy@nus.edu.sg}}
 }
\date{\today}
\maketitle

\setcounter{tocdepth}{2}

\begin{abstract}
 We prove the local well-posedness in Sobolev spaces of the free-boundary problem for compressible inviscid resistive isentropic MHD system under the Rayleigh-Taylor physical sign condition, which describes the motion of a free-boundary compressible plasma in an electro-magnetic field with magnetic diffusion. We use Lagrangian coordinates and apply the tangential smoothing method introduced by Coutand-Shkoller \cite{coutand2007LWP,coutand2012LWP} to construct the approximation solutions. One of the key observations is that the Christodoulou-Lindblad type elliptic estimate \cite{christodoulou2000motion} together with magnetic diffusion not only gives the common control of magnetic field and fluid pressure directly, but also controls the Lorentz force which is a higher order term in the energy functional. 
\end{abstract}

\tableofcontents

\section{Introduction}
In this paper, we consider the 3D magnetohydrodynamics (MHD) equations with magnetic resistivity
\begin{equation}\label{CMHD}
\begin{cases}
\rho D_t u=B\cdot\nabla B-\nabla P,~~P:=p+\frac{1}{2}|B|^2~~~& \text{in}~\DD; \\
D_t\rho+\rho\dive u=0~~~&\text{in}~\DD; \\
D_t B-\lambda\Delta B=B\cdot\nabla u-B\dive u,~~~&\text{in}~\DD; \\
\dive B=0~~~&\text{in}~\DD,
\end{cases}
\end{equation}
describing the motion of a compressible conducting fluid in an electro-magnetic field with magnetic diffusion, $\lambda>0$ is the magnetic diffusivity constant. $\DD:={\bigcup}_{0\leq t\leq T}\{t\}\times \DD_t$ and $\DD_t\subset \R^3$ is the domain occupied by the conducting fluid whose boundary $\p\DD_t$ moves with the velocity of the fluid. $\nabla:=(\p_{x_1},\p_{x_2},\p_{x_3})$ is the standard spatial derivative and $\dive X:=\p_{x_\alpha} X^\alpha$ is the standard divergence for any vector field $X$. Throughout this paper, $X^\alpha=\delta^{\alpha\beta}X_\beta$ for any vector field $X$, i.e., we use Einstein summation convention. The fluid velocity $u=(u_1,u_2,u_3)$, the magnetic field $B=(B_1,B_2,B_3)$, the fluid density $\rho$, the pressure $p$ and the domain $\DD\subseteq[0,T]\times\R^3$ are to be determined. $D_t:=\p_t+u\cdot\nabla$ is the material derivative. Here we consider the isentropic case, and thus the fluid pressure $p=p(\rho)$ should be a given strictly increasing smooth function of the density $\rho$. 

\subsection{Initial and boundary conditions and constraints}

We consider the Cauchy problem of \eqref{CMHD}: Given a simply-connected bounded domain $\DD_0\subset \R^3$ and the initial data $u_0$, $\rho_0$ and $B_0$ satisfying the constraints $\dive B_0=0$ in $\DD_0$ and $B=\mathbf{0}$ on $\p\DD_0$, we want to find a set $\DD$, the vector field $u$, the magnetic field $B$, and the density $\rho$ solving \eqref{CMHD} satisfying the initial conditions:
\begin{equation}\label{MHDI} 
\DD_0=\{x: (0,x)\in \DD\},\q (u,B,\rho)=(u_0, B_0,\rho_0),\q \text{in}\,\,\{0\}\times \DD_0.
\end{equation}

Thus we need to introduce the initial and boundary conditions. First, we require the following boundary conditions on the free boundary $\p\DD={\cup}_{0\leq t\leq T}\{t\}\times \p\DD_t$:
\begin{equation}\label{MHDB}
\begin{cases}
D_t|_{\partial\DD}\in T(\partial\DD) \\
p=0&\text{on}~\partial\DD, \\
B=\mathbf{0}&\text{on}~\partial\DD.
\end{cases}
\end{equation}

\subsubsection*{Illustration on the boundary conditions}

The first condition in \eqref{MHDB} means that the boundary moves with the velocity of the fluid. It can be equivalently rewritten as: $V(\p\DD_t)=u\cdot n$ on $\p\DD$ or $(1,u)$ is tangent to $\p\DD$. The second condition in \eqref{MHDB} means that outside the fluid region $\DD_t$ is the vacuum. The third boundary condition requires that the magnetic field vanishes on the boundary. 

\begin{rmk} 
~
\begin{enumerate}
\item In \eqref{CMHD}, the divergence-free condition of $B$ is just a constraint for initial data and thus the system is not over-determined. Indeed, the original version of the third equation in \eqref{CMHD} is $\p_t B=-\nab \times E$ where $E:=-u\times B+\lambda j$ is the electric field and $j:=\nab\times B$ is the current density. Taking divergence in this equation and using the continuity equation will give us $D_t(\rho^{-1}\dive B)=0$ and thus $\dive B=0$ is preserved if it initially holds. Throughout this manuscript, we will always use the heat equation of $B$ to do estimates.
\item For ideal MHD ($\lambda=0$), the boundary condition $B=\mathbf{0}$ should NOT be an imposed conditions for the system, otherwise the system is over-determined. Instead, this is a direct result of propagation of the constraints on initial data. See also Hao-Luo \cite{hao2014priori} for details. However, for resistive MHD, the equation of $B$ is a parabolic equation which has to be given a boundary condition as opposed to the ideal case. Such a condition will not make the system be over-determined. See also Wang-Xin \cite{wangxinMHD2} in the case of incompressible inviscid resistive MHD surface waves.
\end{enumerate}
\end{rmk}

\subsubsection*{Related physical background}

 The free-boundary problem considered in this manuscript originates from the plasma-vacuum free-interface model, which is an important theoretic model both in laboratory and in astro-physical magnetohydrodynamics: The plasma is confined in a vacuum in which there is another magnetic field $\hat{B}$, and there is a free interface $\Gamma(t)$, moving with the motion of plasma, between the plasma region $\Omega_+(t)$ and the vacuum region $\Omega_{-}(t)$. This model requires that \eqref{CMHD} holds in the plasma region $\Omega_+(t)$ and the pre-Maxwell system holds in vacuum $\Omega_{-}(t)$:
\begin{equation}\label{outsideB}
\curl\hat{B}=\mathbf{0},~~~\dive\hat{B}=0.
\end{equation}  
On the interface $\Gamma(t)$, it is required that there is no jump for the pressure or the \textbf{normal components of magnetic fields}:
\begin{equation}\label{interface}
B\cdot n=\hat{B}\cdot n,~~~P:=p+\frac12|B|^2=\frac12|\hat{B}|^2
\end{equation}  where $n$ is the exterior unit normal to $\Gamma(t)$. Note that for ideal MHD ($\lambda=0$) \eqref{interface} should also be a constraint on initial data which propagates instead of an imposed boundary condition. For more details, we refer readers to Chapter 4, 6 in \cite{MHDphy}.

Hence, the case considered in this manuscript is that the vacuum magnetic field $\hat{B}$ vanishes plus the imposed condition $B=\mathbf{0}$ on the boundary which is reasonable for resistive MHD. It characterizes the motion of an isolated plasma in an electro-magnetic field with magnetic diffusion.

\subsubsection*{Energy conservation/dissapation}

The free-boundary compressible resistive MHD system together with boundary conditions \eqref{MHDB} satisfies the following energy conservation/dissapation:  Set $Q(\rho)=\int_1^{\rho} p(R)/R^2 dR$, then we use \eqref{CMHD} to get
\begin{equation}\label{conserve1}
\begin{aligned}
&~~~~\frac{d}{dt}\left(\frac{1}{2}\idt\rho |u|^2\dx+\frac{1}{2}\idt|B|^2 \dx +\idt \rho Q(\rho)\dx\right)  \\
&=\idt \rho u\cdot D_tu\dx+\idt B\cdot D_t B\dx+\idt \rho D_t Q(\rho)\dx+\frac{1}{2}\idt \rho D_t(1/\rho)|B|^2\dx\\
&=\idt u\cdot (B\cdot\nabla B)\dx-\idt u\cdot\nabla P \dx+\idt B\cdot(B\cdot\nabla u)\dx-\idt |B|^2\dive u\dx \\
&~~~~+\idt p(\rho) \frac{D_t \rho}{\rho} \dx -\frac{1}{2}\idt \frac{D_t \rho}{\rho}|B|^2\dx.
\end{aligned}
\end{equation}

Integrating by part in the first term in the last equality, this term will cancel with $\idt B\cdot(B\cdot\p u)\dx$ because the boundary term and the other interior term vanishes due to $B\cdot n=0$ and $\dive B=0$ respectively. Here $n$ is the exterior unit normal vector to $\p\DD_t$. Also we integrate by parts in the second term and then use the continuity equation to get
\begin{equation}\label{conserve2}
\begin{aligned}
-\idt u\cdot\nabla P\dx&=\idt P\dive u\dx-\underbrace{\ipdt (u\cdot N)P dS}_{=0}=-\idt p\frac{D_t \rho}{\rho} \dx +\frac{1}{2}\idt |B|^2\dive u\dx\\
&=-\idt p\frac{D_t \rho}{\rho} \dx +\idt |B|^2\dive u\dx -\frac{1}{2}\idt |B|^2\dive u\dx \\
&=-\idt p\frac{D_t \rho}{\rho} \dx +\idt |B|^2\dive u\dx+\frac{1}{2}\idt \frac{D_t \rho}{\rho}|B|^2\dx.
\end{aligned}
\end{equation}

Summing up \eqref{conserve1} and \eqref{conserve2}, one can get the energy conservation for the free-boundary ideal compressible MHD:
\begin{equation}
\frac{d}{dt}\left(\frac{1}{2}\idt\rho |u|^2\dx+\frac{1}{2}\idt|B|^2 \dx +\idt \rho Q(\rho)\dx\right)  =0.
\end{equation} Also one can see this energy conservation coincides with the analogue for the free-boundary compressible Euler's equations in Lindblad-Luo \cite{lindblad2018priori}.

For the resistive compressible MHD as stated in \eqref{CMHD}, there will be one extra dissipation term, and one can integrate by part to get the energy dissipation.
\begin{equation}
\begin{aligned}
&~~~~\frac{d}{dt}\left(\frac{1}{2}\idt\rho |u|^2\dx+\frac{1}{2}\idt|B|^2 \dx +\idt \rho Q(\rho)\dx\right) \\
&=0+\lambda\idt B\cdot\Delta B \dx
=-\lambda\idt |\nabla B|^2 \dx<0.
\end{aligned}
\end{equation}

\subsubsection*{Equation of state: Isentropic liquid}

Since $p=p(\rho)$ and $p|_{\p\DD}=0$, we know the fluid density also has to be a constant $\bar{\rho_0}\geq 0$ on the boundary. We assume $\bar{\rho_0}>0$, corresponding to the case of liquid as opposed to gas. Hence
\begin{equation}\label{liquid}
p(\bar{\rho_0})=0,~~p'(\rho)>0,~~\text{for }\rho\geq \bar{\rho_0}.
\end{equation}

\subsubsection*{Physical constraints}

Next we impose the following natural conditions on $\rho'(p)$ for some fixed constant $A_0>1$. See also Luo \cite{luo2018cww}.
\begin{equation}\label{weight}
A_0^{-1}\leq|\rho^{(m)}(p)|\leq A_0,\text{ and }~A_0^{-1}|\wt|^m\leq|\rho^{(m)}(p)|\leq A_0|\wt|^m,~~for~1\leq m\leq 5.
\end{equation}

We also need to impose the Rayleigh-Taylor sign condition
\begin{equation}\label{sign}
-\nabla_n P\geq c_0>0~~on~\p\DD_t,
\end{equation} where , $c_0>0$ is a constant and $P:=p+\frac{1}{2}|B|^2$ is the total pressure. When $B=0$, Ebin \cite{ebin1987equations} proved the ill-posedness of the free-boundary incompressible Euler equations without Rayleigh-Taylor sign condition. For the free-boundary MHD equations, \eqref{sign} is also necessary: Hao-Luo \cite{hao2018ill} proved that the free-boundary problem of 2D incompressible MHD equations is ill-posed when \eqref{sign} fails. We also note that \eqref{sign} is only required for initial data and it propagates in a short time interval because one can prove it is $C_{t,x}^{0,1/4}$ H\"older continuous by using Morrey's embedding. See Luo-Zhang \cite{luozhang2019MHD2.5} and Zhang \cite{ZhangCRMHD1}.

\subsubsection*{Compatibility conditions on initial data}

To make the initial-boundary value problem \eqref{CMHD}-\eqref{MHDB} be well-posed, the initial data has to satisfy certain compatibility conditions on the boundary. In fact, the continuity equation implies that $\dive v|_{\p\DD}=0$ and thus we have to require $p_0|_{\p\DD_0}$=0 and $\dive v_0|_{\p\DD_0}$=0. Also the constraints on the magnetic field $\dive B=0$ and $B|_{\p\DD}=\mathbf{0}$ requires that $\dive B_0=0$ and $B_0|_{\p \DD_0}=\mathbf{0}$. Furthermore, we define the $k$-th($k\geq 0$) order compatibility condition as follows:
\begin{equation}\label{cck}
D_t^j p|_{\p\DD_0}=0,~~D_t^j B|_{\p\DD_0}=\mathbf{0}~~\text{at time }t=0~~\forall 0\leq j\leq k.
\end{equation} Such initial data has been constructed in the author's previous paper \cite{ZhangCRMHD1}.

In this manuscript, we prove the local well-posedness of the free-boundary compressible resistive MHD system under Rayleigh-Taylor sign condition: System \eqref{CMHD} with initial-boundary condition \eqref{MHDI}-\eqref{MHDB} and physical conditions \eqref{liquid}-\eqref{sign}.

\subsection{History and background}

\subsubsection*{Free-boundary Euler equations}

In the past a few decades, there have been numerous studies of the free-boundary problems in inviscid hydrodynamics. For incompressible Euler equations, Wu's work \cite{wu1997LWPww, wu1999LWPww} on the local well-posedness(LWP) of full water wave system have been considered as the first breakthrough in the study of free-surface perfect fluid. Lannes \cite{lannes2005ww} proved the case when the water wave has a fixed bottom. Note that the incompressible irrotational water wave system can be equivalently written as a dispersive system on the boundary, and thus long time behaviors are expected to be considered. Wu \cite{wu2009GWPww, wu2011GWPww} first established the (almost) global well-posedness. There are also other huge works on the global solution of incompressible water wave with or without surface tension, see Germain-Masmoudi-Shatah \cite{GMSgwp12, GMSgwp14}, Alazard-Burq-Zuily \cite{alazard2014cauchy}, Alazard-Delort \cite{alazard2015global}, Deng-Ionescu-Pausader-Pusateri \cite{dengGWPwwst}, Ifrim-Tataru \cite{ifrim2}, etc. In the case of nonzero vorticity, Christodoulou-Lindblad \cite{christodoulou2000motion} first established the a priori estimates without loss of regularity. Lindblad \cite{lindblad2003, lindblad2005well} proved the local well-posedness by using Nash-Moser iteration. Coutand-Shkoller \cite{coutand2007LWP, coutand2010LWP} introduced the tangential smoothing method to proved the local well-posedness with or without surface tension and avoid the loss of regularity. See also Shatah-Zeng \cite{shatah2008geometry, shatah2008priori,shatah2011local} for nonzero surface tension, and Zhang-Zhang \cite{zhangzhang08Euler} for incompressible water wave with vorticity.

The study of compressible perfect fluids is not quite developed as opposed to incompressible case. Lindblad \cite{lindblad2004, lindblad2005cwell} first established the local well-posedness by Nash-Moser iteration. Trakhinin \cite{trakhiningas2009} proved the case of a non-relativistic and relativistic gas (and liquid) in an unbounded domain by means of hyperbolic system and Nash-Moser iteration. Lindblad-Luo \cite{lindblad2018priori} established the first a priori estimates without loss of regularity and incompressible limit in the case of a liquid by using the wave equation together with delicate elliptic estimates and Luo \cite{luo2018cww} generalized to compressible water wave with vorticity. Later, Ginsberg-Lindblad-Luo \cite{GLL2019LWP} proved the LWP for a self-gravitating liquid, Luo-Zhang \cite{luozhangCWWlwp} proved the LWP for a gravity water wave with vorticity. In the case of nonzero surface tension, we refer readers to Coutand-Hole-Shkoller \cite{coutand2013LWP} for LWP and vanishing surface tension limit, Disconzi-Kukavica \cite{disconzi2017prioriI} for low regularity a priori estimates, Disconzi-Luo \cite{luo2019limit} for the incompressible limit. Among other things, we also mention the related works on the study of a gas: Coutand-Lindblad-Shkoller \cite{coutand2010priori} for the a priori bound, Coutand-Shkoller \cite{coutand2012LWP}, Jang-Masmoudi \cite{jang2014gas} and Luo-Xin-Zeng \cite{luoxinzeng2014} for LWP.

\subsubsection*{Free-boundary MHD equations: Incompressible case}

Compared with perfect fluids, the study of free-boundary MHD is much more complicated due to the strong coupling between fluid and magnetic field and the failure of irrotational assumption and enhanced-regularity curl estimates (See Luo-Zhang \cite{luozhang2019MHD2.5} for detailed discussion). For the incompressible ideal free-boundary MHD, Hao-Luo \cite{hao2014priori} first established the Christodoulou-Lindblad type a priori estimates under Rayleigh-Taylor sign condition and Gu-Wang \cite{gu2016construction} first proved the LWP. Then Hao-Luo \cite{haoluo2019} proved the LWP for the linearized system when the fluid region is diffeomorphic to a ball. Luo-Zhang \cite{luozhang2019MHD2.5} established the low regularity a priori estimates in a small fluid domain. 

For the full plasma-vacuum model, Gu \cite{guaxi1,guaxi2} proved the LWP for the axi-symmetric case. In general case, all of the existing results require a non-collinearity condition $|B\times\hat{B}|\geq c_0>0$ on the free interface which has stronger stabilization effect than Taylor sign condtion \eqref{sign}. Morando-Trakhinin-Trebeschi \cite{iMHDlinear} proved LWP for linearized case. Then Sun-Wang-Zhang \cite{sun2017well} proved the LWP for the full plasma-vacuum model. We also note that the a priori estimates and LWP of the full plasma-vacuum model in incompressible ideal MHD under Rayleigh-Taylor sign condition is still open when the vacuum magnetic field $\hat{B}$ is non-trivial. 

For the viscous and resistive case, we refer to Lee \cite{leeMHD1,leeMHD2}, and Padula-Solonnikov \cite{Solonnikov}. In the case of nonzero surface tension, Luo-Zhang \cite{luozhang2019MHDST} first established the a priori estimates for ideal MHD. Chen-Ding \cite{chendingMHDlimit} proved the inviscid limit. Wang-Xin \cite{wangxinMHD2} proved the global well-posedness for the plasma-vacuum model for inviscid resistive MHD around a uniform transversal magnetic field. Guo-Zeng-Ni \cite{GuoMHDSTviscous} proved the decay rate of viscous-resistive incompressible MHD with surface tension.

\subsubsection*{Free-boundary MHD equations: Compressible case}

The study of free-boundary compressible MHD is even more delicate due to the extra coupling between pressure wave and magnetic field. For compressible ideal MHD ($\lambda=0$), there is one derivative loss in the curl estimates, and thus a loss of normal derivative. We note that analogous loss does not appear in tangential estimates. The reason is that taking curl eliminates the symmetry enjoyed by the equations. Briefly speaking, one can recall the derivation of energy conservation that the term $-\frac12\idt |B|^2\dive u$ is cancelled by part of $-\idt u\cdot\nabla P$. But taking the curl eliminates the counterpart of $-\idt u\cdot\nabla P$ \textbf{before} such cancellation is produced because of $\curl\nabla P=0$. On the other hand, if we taking tangential derivatives instead of curl, then analogous cancellation is still preserved. We recommend readers to read Section 1.5 in the author's previous work \cite{ZhangCRMHD1} for detailed discussion.  

Before further discussion on the free-boundary problem in compressible MHD, let us first review the results for the fixed-domain problems. The compressible ideal MHD system in a fixed domain is a quasi-linear symmetric hyperbolic system with characteristic boundary conditions \cite{secchi1996}. The loss of normal derivatives seems necessary, especially when the uniform Lopatinskii condition fails for the linearized counterpart. On the one hand, Ohno-Shirota \cite{MHDexample} constructed explicit counter-examples to the local existence in standard Sobolev space $H^l(l\geq 2)$ for the linearized problem. On the other hand, Chen Shu-Xing \cite{CSX} first introduced the anisotropic Sobolev spaces $H_*^m$ to compensate the normal derivative loss for characteristic boundary problems. With the help of such funtional spaces, Yanagisawa-Matsumura \cite{MHDfirst} established the first LWP result for compressible ideal MHD. Later, the LWP result was improved by Secchi \cite{secchi1995,secchi1996} such that there is no regularity loss from initial data to solution, both are in $H_*^m$ for $m\geq 16$. However, it is still difficult to generalized Secchi's results to free-boundary case due to the extra derivative loss brought by free boundary.

Although there is derivative loss even for the linearized problem due to the failure of uniform Lopatinskii condition, the tame estimates, which allows a fixed order loss of derivatives, can still be established in anisotropic Sobolev space. Recently, Trakhinin-Wang \cite{trakhininMHD2020} proved the LWP for free-boundary compressible ideal MHD under Rayleigh-Taylor sign condition by using such method and Nash-Moser iteration. However, a suitable energy estimates without loss of regularity cannot be established by Nash-Moser iteration. To resolve such problem, we observe that the magnetic resistivity exactly compensates the derivative loss mentioned above. With the help of this observation, the author \cite{ZhangCRMHD1} recently proved the Christodoulou-Lindblad type a priori estimates in standard Sobolev spaces and incompressible limit for the free-boundary compressible resistive MHD system. As a contiuation, the presenting manuscript deals with the local well-posedness of this problem. Note that it is still quite non-trivial to pass from a priori bound to local existence for a free-boundary problem: Direct iteration and fixed-point argument often fails due to the lack of regularity of free boundary. Once the local well-posedness is established, the next possible goal is to consider the vanishing resistivity limit, though it seems quite difficult now.

As for the full plasma-vacuum model in compressible ideal MHD, Secchi-Trakhinin \cite{secchi2013well} proved the LWP under the non-collinearity condition $|B\times\hat{B}|\geq c_0>0$. The study of plasma-vacuum model in compressible ideal MHD under Rayleigh-Taylor sign condition is still open. See Trakhinin \cite{trakhininMHD2016} for more detailed discussion.

\subsection{Reformulation in Lagrangian coordinates}

We use Lagrangian coordinates to reduce free-boundary problem to a fixed domain. We assume $\Omega:=\T^2\times(0,1)$ to be the reference doamin. The coordinates on $\Omega$ is $y=(y_1,y_2,y_3)$. We define $\eta:[0,T]\times\Omega\to \DD$ as the flow map of velocity field $u$, i.e., 
\begin{align}
\p_t\eta(t,y) = u(t, \eta(t,y)),\q \eta(0,y)=y. 
\end{align}
By chain rule, it is easy to see that $D_t$ becomes $\p_t$ in the $(t,y)$ coordinates and the free-boundary $\p\DD_t$ becomes fixed ($\Gamma=\T^2$). We introduce the Lagrangian varaibles as follows: $v(t,y):=u(t, \eta(t,y))$, $b(t,y):=B(t, \eta(t,y))$, $q(t,y):=Q(t, \eta(t,y))$, and $R(t,y):=\rho(t, \eta(t,y))$.

Let $\p=\p_y$ be the spatial derivative in the Lagrangian coordinates. We introduce the cofactor matrix $a=[\p \eta]^{-1}$, specifically $a^{\mu\alpha}=a^{\mu}_{\alpha}:=\frac{\p y^{\mu}}{\p \eta^{\alpha}}$. In terms of $\eta,v,b,q,R$, the system \eqref{CMHD}-\eqref{sign} becomes

\begin{equation}\label{CMHDL}
\begin{cases}
\p_t\eta=v &~~~ \text{in}\,[0,T]\times\Omega\\
R\p_t v=\left(b\cdot\pa\right)b-\pa Q &~~~ \text{in}\,[0,T]\times\Omega\\
R'(q)\p_tq+R\di_a v=0 &~~~ \text{in}\,[0,T]\times\Omega\\
\p_t b-\lambda\Delta_a b=\left(b\cdot\pa\right)v-b\di_a v  &~~~ \text{in}\,[0,T]\times\Omega\\
\di_a b=0  &~~~ \text{in}\,[0,T]\times\Omega\\
\p_t|_{[0,T]\times \Gamma}\in T([0,T]\times \Gamma)\\
p=0,~~B=\mathbf{0} &~~~\text{on}\,[0,T]\times\Gamma\\
-\p_3 Q_0|_{\Gamma}\geq c_0>0\\
(\eta,v,b,q,R)=(\text{Id},v_0,b_0,q_0,\rho_0).
\end{cases}
\end{equation}

Here, $\nab_a^\alpha = a^{\mu\alpha}\p_\mu$ and $\di_a X=\nab_a\cdot X=a^{\mu\alpha}\p_\mu X_\alpha$ denote the covariant derivative and divergence in Lagrangian coordinates (or say Eulerian derivative/divergence). In the manuscript, we adopt the convention that the Greek indices range over $1,2,3$, and the Latin indices range over $1$ and $2$. In addition, since $\eta(0,\cdot)=\text{Id}$, we have $a(0,\cdot)=I$, where $I$ is the identity matrix, and $u_0,B_0,p_0$ and $v_0,b_0,q_0$ agree respectively. Furthermore, let $J:=\det [\p\eta]$ and $A:=Ja$. Then we have the Piola's identity
\begin{align}
\p_\mu A^{\mu\alpha}=0
\end{align}
And $J$ satisfies
\begin{align}
\p_t J = Ja^{\mu\alpha} \p_\mu v_\alpha, 
\label{eq of J}
\end{align} which together with $R'(q)\p_tq+R\di_a v=0$ gives that $\rho_0=RJ$. Therefore, $R$ can be fully expressed in terms of $\eta$ and $\rho_0$, and thus the initial-boundary value problem can be expressed as follows:

\begin{equation}\label{CRMHDL}
\begin{cases}
\p_t\eta=v &~~~ \text{in}\,[0,T]\times\Omega\\
\rho_0J^{-1}\p_t v=\left(b\cdot\pa\right)b-\pa Q &~~~ \text{in}\,[0,T]\times\Omega\\
\frac{JR'(q)}{\rho_0}\p_tq+\di_a v=0 &~~~ \text{in}\,[0,T]\times\Omega\\
\p_t b-\lambda\Delta_a b=\left(b\cdot\pa\right)v-b\di_a v  &~~~ \text{in}\,[0,T]\times\Omega\\
\di_a b=0  &~~~ \text{in}\,[0,T]\times\Omega\\
\p_t|_{[0,T]\times \Gamma}\in T([0,T]\times \Gamma)\\
p=0,~~B=\mathbf{0} &~~~\text{on}\,[0,T]\times\Gamma\\
-\p_3 Q|_{\Gamma}\geq c_0>0  &~~~\text{on}\,\{t=0\}\\
(\eta,v,b,q)=(\text{Id},v_0,b_0,q_0).
\end{cases}
\end{equation}

Our result is 
\begin{thm}
Let the initial data $v_0\in H^4(\Omega),~b_0\in H^{5}(\Omega),~q_0\in H^{4}(\Omega)$ satisfy the compatibility conditions up to 5-th order. Then there exists some $T_1>0$, such that the system \eqref{CRMHDL} has a unique solution $(\eta,v,b,q)$ in $[0,T_1]$ satisfying the following estimates
\begin{align}\label{noloss}
\sup_{0\leq T\leq T_1}\EE(T)\leq 2\left(\left\|v_0\right\|_4^2+\left\|b_0\right\|_{5}^2+\left\|q_0\right\|_4^2\right),
\end{align}where
\begin{equation}\label{EE}
\EE(T):=E(T)+H(T)+W(T)+\left\|\p_t^{4-k}\left(\left(b\cdot\pa\right)b\right)\right\|_{k}^2,
\end{equation}
where
\begin{align}
\label{E} E(T):=\left\|\eta\right\|_4^2+\left|a^{3\alpha}\TP^4\eta_{\alpha}\right|_0^2&+\sum_{k=0}^4 \left(\left\|\p_t^{4-k}v\right\|_{k}^2+\left\|\p_t^{4-k}b\right\|_{k}^2+\left\|\p_t^{4-k} q\right\|_k^2\right),\\
\label{H} H(T)&:=\int_0^T\io\left|\p_t^{5} b\right|^2\dy\dt+\left\|\p_t^4 b\right\|_1^2,\\
\label{W} W(T)&:=\left\|\p_t^{5} q\right\|_0^2+\left\|\p_t^4 q\right\|_1^2.
\end{align}
\end{thm}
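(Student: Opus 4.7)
The plan is to construct solutions by the Coutand--Shkoller tangential smoothing scheme, build uniform estimates in the energy $\EE(T)$ that parallel the a priori estimates of \cite{ZhangCRMHD1}, and pass to the limit. Concretely, I would introduce the horizontal mollifier $\bw$ acting only in the tangential directions $y_1,y_2$, replace the flow map $\eta$ inside all coefficient matrices by the smoothed version $\ek := \bw^2 \eta$, and thereby form the smoothed coefficients $\ak,\Ak,\Jk$. Then I set up a linear iteration: given $(v^{(n)},b^{(n)},q^{(n)})$, I define $\eta^{(n+1)}$ by $\p_t \eta^{(n+1)} = v^{(n)}$, build $\ek^{(n+1)}$, and then successively solve
\textbf{(i)} a linear parabolic problem $\p_t b - \lambda\lapak b = F(v^{(n)},b^{(n)})$ with Dirichlet data $b=\mathbf{0}$ on $\bou$;
\textbf{(ii)} an elliptic/wave problem for $q^{(n+1)}$ obtained by taking $\diva$ of the momentum equation and using the continuity equation, with Dirichlet data $q=\tfrac12|b^{(n+1)}|^2=0$ on $\bou$;
\textbf{(iii)} the velocity $v^{(n+1)}$ recovered from the momentum equation. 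Standard parabolic theory gives solvability of (i), while (ii) is solved by the Christodoulou--Lindblad elliptic framework, whose coercivity is guaranteed by the Rayleigh--Taylor sign $-\p_3 Q_0 \geq c_0>0$.

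The substantive part of the proof is obtaining bounds on $\EE(T)$ that are uniform in the smoothing parameter $\kk$ and in the iteration index $n$. I would propagate the estimates exactly in the order of the a priori estimates in \cite{ZhangCRMHD1}: apply $\p_t^{4-k}$ to the momentum equation, pair with $\p_t^{4-k} v$ and integrate by parts in $\p_{\mu} \Ak^{\mu\alpha}$, so that Piola's identity and the boundary condition $q=0$ produce the Taylor-sign boundary integral controlling $|\ak^{3\alpha}\TP^4 \eta_\alpha|_0^2$ after time integration, while the Lorentz term $(b\cdot\pak) b$ provides the magnetic energy. Commutators generated by moving $\p_t^{4-k}$ through $\nabla_{\ak}$ and $\lapak$ are the only places where $\kk$-smoothing matters; they are controlled using the self-adjointness identity $\int (\bw f)g = \int f (\bw g)$ and the duality estimates for $\bw$, so the dangerous top-order smoothing errors can be moved off of $\eta$ and absorbed in $\EE(T)$ after integration by parts in time. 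The magnetic diffusion contributes the integrated dissipation $\lambda\int_0^T \|\nabla_{\ak} \p_t^{4-k} b\|_0^2 \dt$, which fuels the bound on $H(T)$ and simultaneously allows the Christodoulou--Lindblad elliptic estimate applied to $q$ to absorb the Lorentz contribution, recovering the higher-order term $\|\p_t^{4-k}((b\cdot\pa)b)\|_k^2$ in $\EE(T)$.

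The main obstacle is precisely the coupling between the parabolic $b$-equation and the pressure equation at the highest order. The standard route of estimating $v,b,q$ one at a time fails, because controlling $q$ in $H^4$ requires $b\in H^5$ (the Lorentz force sits in the source), and controlling $b$ in $H^5$ via parabolic regularity needs $v\in H^4$, which in turn requires $q\in H^4$ through the momentum equation. I would close this loop by performing the elliptic estimate for $q$ \emph{simultaneously} with the parabolic estimate for $b$, treating $(b\cdot\pak)b$ as an unknown higher-order quantity that is recovered a posteriori from the combined elliptic and dissipation control, as pointed out in the abstract. This step is where the construction of initial time derivatives $\p_t^j(v,b,q)|_{t=0}$ matters: the 5-th order compatibility conditions \eqref{cck}, together with the construction in \cite{ZhangCRMHD1}, guarantee that these are well-defined in the right Sobolev spaces and vanish on $\bou$ to the required order, so the iteration starts in a bounded set of $\EE$.

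With uniform bounds on $[0,T_1]$ for a $\kk$-independent $T_1>0$ chosen via a bootstrap on $\EE(T)$, I obtain strong convergence of a subsequence of the approximate solutions by Aubin--Lions compactness, the limit being a solution of \eqref{CRMHDL}. Uniqueness is then proved by a direct energy estimate on the difference of two solutions at one order below the existence regularity: the parabolic regularization of $b$ and the Rayleigh--Taylor sign condition on $Q$ close the Gr\"onwall inequality, yielding the estimate \eqref{noloss} and completing the proof.
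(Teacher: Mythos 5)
Your overall strategy (Coutand--Shkoller tangential smoothing, linearize, iterate, pass to the limit, exploit magnetic diffusion plus Christodoulou--Lindblad elliptic estimates to close the $b$--$q$ coupling, uniqueness at one derivative lower) matches the architecture of the paper's proof, but several of the key mechanisms that make the smoothed estimates actually close are missing or misattributed.

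\begin{enumerate}
\item \textbf{Missing correction term $\psi$.} You set $\p_t\eta^{(n+1)}=v^{(n)}$, but in the paper the smoothed flow map must satisfy $\p_t\eta=v+\psi$, where $\psi$ solves a Laplace problem with boundary data $\TL^{-1}\mathbb{P}_{\neq 0}\big(\TL\eta_\beta\ak^{i\beta}\TP_i\lkk^2 v-\TL\lkk^2\eta_\beta\ak^{i\beta}\TP_i v\big)$. When $\kappa>0$, the $\TP^4$-tangential estimate of $v$ produces two boundary terms that cancel each other only at $\kappa=0$; the correction $\psi$ is engineered precisely so that its contribution restores this cancellation (the Gu--Wang device). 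Your claim that ``the dangerous top-order smoothing errors can be moved off of $\eta$'' by self-adjointness of $\lkk$ alone will not work: self-adjointness lets you move one mollifier, but it does not recreate the algebraic cancellation that is lost upon smoothing, nor does it bound the term $\ig\p_3 Q\,\ak^{3\beta}\tpl\lkk\eta_\beta\,\ak^{3\gamma}\TP_i\lkk^2 v_\gamma\,\ak^{i\alpha}\tpl\lkk\eta_\alpha\,dS$.
\item \textbf{Missing Alinhac good unknowns.} When the four tangential derivatives are purely spatial ($\dd^4=\TP^4$), the commutator $[\TP^4,\ak^{\mu\alpha}]\p_\mu Q$ involves $\TP^4\ak\sim\TP^4\p\ek\cdot\p\ek$, i.e.\ five derivatives of $\eta$, which is out of control. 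The paper replaces $\TP^4 v$ and $\TP^4 Q$ by $\VV=\tpl v-\tpl\ek\cdot\pak v$ and $\QQ=\tpl Q-\tpl\ek\cdot\pak Q$; these good unknowns absorb exactly that worst term. Your sketch treats all $\p_t^{4-k}$ tangential derivatives uniformly and would fail at the $k=4$, pure-$\TP^4$ case.
\item \textbf{Misattribution of the Rayleigh--Taylor sign condition.} You state that the Christodoulou--Lindblad elliptic framework's ``coercivity is guaranteed by the Rayleigh--Taylor sign.'' This is not how the paper uses it: Lemma \ref{GLL} is a plain elliptic estimate, valid regardless of the sign of $\p_3 Q$, applied to $b$, $\p_t^k b$, the Lorentz force, and $\p_t^k q$ because they all vanish on $\Gamma$. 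The RT sign is used only to make the boundary energy $\frac{1}{2}\ig(-\p_3 Q)\,|\ak^{3\alpha}\tpl\lkk\eta_\alpha|^2\,dS$ nonnegative, which gives the $|\ak^{3\alpha}\TP^4\eta_\alpha|_0^2$ piece of $E(T)$.
\item \textbf{Conflation of time-derivative and pure spatial tangential estimates.} You claim that pairing $\p_t^{4-k}$ of the momentum equation with $\p_t^{4-k}v$ ``produces the Taylor-sign boundary integral controlling $|\ak^{3\alpha}\TP^4\eta_\alpha|_0^2$.'' In fact, whenever $\dd^4$ contains at least one $\p_t$, the boundary integral is $\int_\Gamma \ak^{3\alpha}\dd^4 Q\,\dd^4 v_\alpha\,dS$ with $\dd^4 Q|_\Gamma=0$, so it vanishes identically; that case is easy. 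The RT boundary term arises \emph{only} in the $\TP^4$ estimate (through the good unknowns and $\psi$), which you have not addressed.
\end{enumerate}
Without items (1) and (2) the $\kappa$-approximate energy estimate does not close, so the uniform-in-$\kappa$ bound $\EE_\kappa\leq\PP_0$ that your limiting argument and uniqueness proof rely on is not established; you would need to supply these mechanisms to complete the argument.
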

\begin{rmk}
~
\begin{enumerate}
\item For simplicity we set the magnetic diffusivity constant $\lambda=1$.
\item The initial data satisfying the compatibility conditions has been constructed in the author's previous paper \cite{ZhangCRMHD1}, so these steps are omitted in this manuscript.
\end{enumerate}
\end{rmk}

\bigskip

We list all the notations used in this manuscript:

\bigskip

\noindent\textbf{List of Notations: }
\begin{itemize}
\item $\Omega:=\T^2\times(0,1)$ and $\Gamma:=\T^2\times(\{0\}\cup\{1\})$.
\item $\|\cdot\|_{s}$:  We denote $\|f\|_{s}: = \|f(t,\cdot)\|_{H^s(\Omega)}$ for any function $f(t,y)\text{ on }[0,T]\times\Omega$.
\item $|\cdot|_{s}$:  We denote $|f|_{s}: = |f(t,\cdot)|_{H^s(\Gamma)}$ for any function $f(t,y)\text{ on }[0,T]\times\Gamma$.
\item $P(\cdots)$:  A generic polynomial in its arguments;
\item $\PP_0$:  $\PP_0=P(\|v_0\|_4,\|B_0\|_5,\|q_0\|_4)$;
\item $[T,f]g:=T(fg)-T(f) g$, and $[T,f,g]:=T(fg)-T(f)g-fT(g)$, where $T$ denotes a differential operator or the mollifier and $f,g$ are arbitrary functions.
\item $\TP,\TL$: $\TP=\p_1,\p_2$ denotes the tangential derivative and $\TL:=\p_1^2+\p_2^2$ denotes the tangential Laplacian.
\item $\nabla^{\alpha}_a f:=a^{\mu\alpha}\p_{\mu} f$, $\dive_a\mathbf{f}:=a^{\mu\alpha}\p_\mu \mathbf{f}_{\alpha}$ and $(\curl_a\mathbf{f})_\lambda:=\epsilon_{\lambda\mu\alpha}a^{\nu\mu}\p_\nu \mathbf{f}^{\alpha}$, where $\epsilon_{\lambda\mu\alpha}$ is the sign of the 3-permutation $(\lambda\mu\alpha)\in S_3.$
\end{itemize}
\begin{flushright}
$\square$
\end{flushright}

\subsection{Strategy of the proof}\label{strategy}

\subsubsection{Necessity of tangential smoothing}

Since there have been results \cite{ZhangCRMHD1} on the a priori bound of this system, it is natural to consider the fixed point argument and Picard iteration to construct the solution to \eqref{CRMHDL}. Specifically, one starts with trivial solution $(\eta^{(0)},v^{(0)},b^{(0)},q^{(0)})=(\text{Id},\mathbf{0,0},0)$, and inductively define $(\eta^{(n+1)},v^{(n+1)},b^{(n+1)},q^{(n+1)})$ by the following linearized system provided that $\{(\eta^{(k)},v^{(k)},b^{(k)},q^{(k)})\}_{0\leq k\leq n}$ are given. (Note that the div-free condition for magnetic field is  not needed in the nonlinear estimates.)
\begin{equation}\label{linearnn}
\begin{cases}
\p_t \eta^{(n+1)}=v^{(n+1)} &~~~\text{in } \Omega, \\
\frac{\rho_0}{J^{(n)}}\p_t v^{(n+1)}=(b^{(n)}\cdot\nabla_{a^{(n)}}) b^{(n+1)}-\nabla_{a^{(n)}} Q^{(n+1)},~~Q^{(n+1)}=q^{(n+1)}+\frac{1}{2}|b^{(n+1)}|^2 &~~~\text{in } \Omega, \\
\frac{J^{(n)} R'(q^{(n)})}{\rho_0}\p_tq^{(n+1)}+\dive_{a^{(n)}} v^{(n+1)}=0 &~~~\text{in } \Omega, \\
\p_t b^{(n+1)}-\Delta_{a^{(n)}} b^{(n+1)}=(b^{(n)}\cdot\nabla_{a^{(n)}}) v^{(n+1)}-b^{(n)}\dive_{a^{(n)}} v^{(n+1)} , &~~~\text{in } \Omega, \\
q^{(n+1)}=0,~b^{(n+1)}=\mathbf{0} &~~~\text{on } \Gamma, \\
(\eta^{(n+1)},v^{(n+1)}, b^{(n+1)},q^{(n+1)})|_{\{t=0\}}=(\text{Id},v_0, b_0,q_0).
\end{cases}
\end{equation}
Then we need to 
\begin{itemize}
\item Construct the solution to the linearized system \eqref{linearnn}.
\item Prove uniform-in-$n$ a priori estimates of \eqref{linearnn}.
\item Prove  $\{(\eta^{(n)},v^{(n)},b^{(n)},q^{(n)})\}_{n\in \N}$  converges strongly.
\end{itemize}
The first step is usually done by standard fixed point argument. While in the second step, we need to invoke the elliptic estimates \cite{christodoulou2000motion} Lemma \ref{GLL} to control $\pa b$ and $\pa q$ which is necessary for us to construct suitable functional space in step 1:
\[
\left\|\nabla_{a^{(n)}} b^{(n+1)}\right\|_{4}\lesssim P(\|\eta^{(n)}\|_4)\left(\|\Delta_a b^{(n+1)}\|_{3}+\|\TP\eta^{(n)}\|_4\|b\|_4\right).
\] We find that the $\eta$ loses one tangential derivative, which motivates us to regularise the flow map in tangential directions. This method was first introduced by Coutand-Shkoller \cite{coutand2007LWP,coutand2012LWP} and then widely used in numerous works on the study of free-boundary hydrodynamics. One can define $\ek:=\lkk^2\eta$ and replace $a$ by $\ak:=[\p\ek]^{-1}$ where $\lkk$ is the standard convolution mollifier in $\R^2$. Since the region is $\T^2\times(0,1)$, the boundary only appears in normal directions and thus the mollifier is always well-defined. After introducing the tangential smoothing, we need to investigate the following nonlinear $\kk$-approximation system:
\begin{equation}\label{nonlinearkk0}
\begin{cases}
\p_t \eta=v &~~~\text{in } \Omega, \\
\rho_0\Jk^{-1}\p_t v=(b\cdot\pak) b-\pak Q,~~Q=q+\frac{1}{2}|b|^2 &~~~\text{in } \Omega, \\
\frac{\Jk R'(q)}{\rho_0}\p_tq+\diva v=0 &~~~\text{in } \Omega, \\
\p_t b-\lapak b=(b\cdot\pak) v-b\diva v, &~~~\text{in } \Omega, \\
q=0,~b=\mathbf{0} &~~~\text{on } \Gamma, \\
-\p_3Q|_{t=0}\geq c_0>0 &~~~\text{on } \Gamma, \\
(\eta,v, b,q)|_{\{t=0\}}=(\text{Id},v_0, b_0,q_0).
\end{cases}
\end{equation} Specifically, we need to
\begin{enumerate}
\item Establish nonlinear a priori estimates uniform in $\kk>0$.
\item Construct the unique strong solution to the nonlinear $\kk$-approximation system for each fixed $\kk>0$.
\end{enumerate}

\subsubsection{A priori estimates of the smoothed approximation system}

\subsubsection*{Magnetic field and Lorentz force: Elliptic estimates}

The ideas of deriving the a priori estimates are quite different from ideal MHD case. For ideal MHD,  the magnetic field can be written as $b=J^{-1}(b_0\cdot\p)\eta$, which should be controlled together with the same derivatives of $v=\p_t\eta$, and higher order terms are expected to vanish due to subtle cancellation. However, the magnetic resistivity yields a direct control of the Lorentz force $(b\cdot\pak)b$ as a source term. First, we notice that $b$ vanishes on the boundary and so does the Lorentz force. Therefore, Lemma \ref{GLL} can be directly applied to $(b\cdot\pak)b$ in the following way:
\[
\left\|(b\cdot\pak) b\right\|_4\approx\left\|\pak((b\cdot\pak) b)\right\|_{3}\lesssim P(\|\ek\|_{3})\left(\left\|\lapak((b\cdot\pak) b)\right\|_{2}+\left\|\TP\ek\right\|_{3}\left\|(b\cdot\pak) b\right\|_{3}\right).
\] Then we invoke the heat equation of $b$ to replace the Laplacian term by first order derivative and thus control the Lorentz force by lower order terms. Similar processes can be applied to the time derivatives of Lorentz force and the space-time derivatives of magnetic field $b$. 

\begin{rmk}
We can control $\|(b\cdot\pak)b\|_4$ by elliptic estimates as above, but it is NOT possible to control the $H^5$-norm of $b$ even if one has the diffusion effect on the magnetic field. Indeed, if we use elliptic estimates to control $\|b\|_5$, then
\[
\|\pak b\|_4\lesssim P(\|\ek\|_4)(\|\lapak b\|_3+\|\TP\ek\|_4\|b\|_4),
\]where $\|\TP\ek\|_4$ is out of control. In other words, the regularity of the free surface is not enough to enhance the full spatial regularity of $b$.
\end{rmk}

\subsubsection*{Velocity and pressure: Div-curl and tangential estimates}

The second and third equations of the nonlinear $\kk$-system above present the following relations if we temporarily omit the coefficients $\rho_0 J$ and $R'(q)$:
\[
\p\p_t^{k} q\approx \pak \p_t^k q\approx-\p_t^{k+1}v+\p_t^k\left((b\cdot\pak)b-\frac{1}{2}|b|^2\right),
\]
and
\[
\p\p_t^{k}v\xrightarrow{\dive}\p_t^k\diva v+\text{curl}+\text{boundary}\approx\p_t^{k+1}q+\text{curl}+\text{boundary}.
\]
Again we note that, it is exactly the curl part that denies the possibility of an energy control in standard Soboelv spaces for compressible ideal MHD. However, for resistive MHD, the contribution of curl part can be directly controlled by taking $\curla$ in the second equation of \eqref{nonlinearkk0} because $\curla(\pak Q)=0$ and $\curla(b\cdot\pak)b$ has been directly controlled. As for the boundary term, we are able to further reduce to interior tangential estimates and divergence estimates by using normal trace Lemma \ref{normaltrace}. Since the terms containing magnetic field $b$ can be reduced to lower order with the help of magnetic diffusion, the procedure above allows us to trade one spatial derivative by one time derivative:
\begin{align*}
\p^4 q\to\p^3\p_t v\xrightarrow{\dive}\p^2\p_t^2q\to\p\p_t^3v &\xrightarrow{\dive} \p_t^4 q \\
\p^3\p_t q\to\p^2\p_t^2 v\xrightarrow{\dive}\p\p_t^3 q&\to\p_t^4 v,
\end{align*}which reduces to the estimates of full time derivatives.

In the tangential estimates, if the tangential derivatives $\dd^4$ contain at least one time derivative, then the energy estimates can be derived by direct computation because $[\dd^4,\ak^{\mu\alpha}]\p_{\mu}Q$ can be controlled. However, when $\dd^4=\TP^4$ contains no time derivative, that commutator is out of control due to $\TP^4\ak\approx\TP^r\p\ek\cdot\p\ek$. In the study of Euler equations, one can integrate $\TP^{1/2}$ by parts and use the enhanced regularity $\|\eta\|_{9/2}$ to control this commutator, but here the presence of magnetic field destroys such property. The reason is that MHD system does not preserve the irrotational assumption propagated from initial data, and thus it is not possible for the flow map to get enhanced regularity. Our method is to introduce the Alinhac good unknowns: For a function $f$, we define $\mathbf{f}:=\TP^4 f-\TP^4\eta\cdot\pak f$ to be the Alinhac good unknown of $f$ when commuting $\TP^4$ with $\pak$. Under this setting, we have that 
\[
\TP^4(\pak f)=\pak\mathbf{f}+\text{ controllable commutators}.
\] Such crucial fact was first observed by Alinhac \cite{alinhacgood89}, and then widely used in the study of first-order quasilinear hyperbolic system. In the study of fluid mechanics, there are also many applications in the study of incompressible inviscid fluids, e.g., \cite{MRgood2017,wangxingood,gu2016construction} and so on, but not widely used in the compressible case \cite{luozhangCWWlwp}. 

Now we apply $\tpl$ (instead of $\TP^4$, will be explained later) to the second equation of \eqref{nonlinearkk0} and define $\VV=\tpl v-\tpl\ek\cdot\pak v$ and $\QQ=\tpl Q-\tpl\ek\cdot\pak Q$ to be the Alinhac good unknowns for $v$ and $Q=q+\frac{1}{2}|B|^2$. Then we have
\[
\rho_0\Jk^{-1}\p_t\VV+\pak\QQ=\underbrace{\tpl\left((b\cdot\pak)b\right)+[\rho_0\Jk^{-1},\tpl]\p_t v_{\alpha}+\rho_0\Jk^{-1}\p_t(\tpl\ek\cdot\pak v)+C(Q)}_{\FF},
\]subject to 
\[
\QQ=-\tpl\ek_{\beta}\ak^{3\beta}\p_3 Q~~~on~\Gamma.
\] and 
\[
\pak\cdot\VV=\tpl(\diva v)-C^{\alpha}(v_{\alpha})~~~in~\Omega.
\]

Taking $L^2$ inner product with $\VV$, and integrating by parts, the interior term is $$\int_0^T\io\QQ\tpl(\diva v)\dy\dt.$$ Plugging $Q=q+\frac{1}{2}|b|^2$ and $\diva v\approx -\p_t q$ into this integral, we are able to get an energy term $$-\frac12\io\frac{\Jk R'(q)}{\rho_0}\left|\tpl q\right|^2\dy$$ plus controllable terms. The boundary term reads

\[
I_0=\int_0^T\ig \p_3 Q\ak^{3\alpha}\ak^{3\beta}\tpl\ek_{\beta}(\tpl\p_t\eta_{\alpha}-\tpl\ek\cdot\pak v_{\alpha})\dS\dt.
\]

The first term produces the Taylor sign term contributing to the boundary term in $E_\kk(t)$ after commuting a $\lkk$:

\[
\begin{aligned}
&\int_0^T\ig \p_3 Q\ak^{3\alpha}\ak^{3\beta}\tpl \ek_{\beta}\tpl\p_t\eta_{\alpha}\dS\dt \\
=&\frac{1}{2}\ig \p_3 Q\left|\ak^{3\alpha}\tpl\lkk\eta_{\alpha}\right|^2\dS\bigg|^T_0\\
&+\ig \p_3 Q \ak^{3\beta}\tpl\lkk\eta_{\beta}\ak^{3\gamma}\TP_i\lkk^2v_{\gamma}\ak^{i\alpha}\tpl\lkk\eta_{\alpha}\dS
-\ig \p_3 Q\ak^{3\alpha}\ak^{3\beta}\tpl\ek_{\beta}\tpl\ek_{\gamma}\ak^{i\gamma} \TP_iv_{\alpha}\dS
\end{aligned}
\]
When $\kk=0$, the two terms in the second line automatically cancel, while for the smoothed version such structure no longer holds. However, inspired by a remarkable observation in Gu-Wang \cite{gu2016construction}, we notice that these two terms can be cancelled if we introduce a correction term $$\ig \p_3 Q\ak^{3\alpha}\ak^{3\beta}\tpl\ek_{\beta}\tpl\psi\dS,$$ where $\psi$ is defined by 
\begin{equation}
\begin{cases}
\Delta \psi=0  &~~~\text{in }\Omega, \\
\psi=\TL^{-1}\mathbb{P}_{\neq 0}\left(\TL\eta_{\beta}\ak^{i\beta}\TP_i\lkk^2 v-\TL\lkk^2\eta_{\beta}\ak^{i\beta}\TP_i v\right) &~~~\text{on }\Gamma,
\end{cases}
\end{equation}where $\mathbb{P}_{\neq 0} $ denotes the standard Littlewood-Paley projection in $\T^2$  which removes the zero-frequency part. $\TL:=\p_1^2+\p_2^2$ denotes the tangential Laplacian operator. Therefore, it suffices to replace $\p_t\eta=v$ by $\p_t\eta=v+\psi$ to produce such an extra cancellation. The structure of $\psi$ also illustrates why we have to replace $\TP^r$ by $\tpl$.

\subsubsection*{Common control of higher order wave and heat equation}

In the previous estimates, we still need to control the $r$-th time derivatives of $\p_t b,(b\cdot\pak) b,\p_tq,\pak q$, which exactly come from the energy functionals of $r$-th time-differentiated heat eqution of $b$
\[
\p_t b-\lapak b=(b\cdot\pak) v-b\diva v
\] and wave equation of $q$
\[
\frac{\Jk R'(q)}{\rho_0}\p_t^2 q-\lapak q=b\cdot\lapak b+w_0,
\]where $w_0$ consists of first order derivatives terms. It is easy to observe that the Laplacian term in RHS of wave equation can be replaced by terms of first-order derivative with the help of magnetic resistivity. In other words, the magnetic resistivity compensate the derivative loss in the wave equation of $q$. Therefore, standard energy estimates of heat and wave equations give the control of $H_\kk(T)$ and $W_\kk(T)$. Note that $\p_t$ is tangential on the boundary. Integrating by parts does not produce any boundary term for the time differentiated equations. Therefore, we are able to close the a priori estimates.

\subsubsection{Linearisation and Picard iteration}

Now, it remains to prove the local existence of the solution to the nonlinear $\kk$-approximation system. The proof is standard linearisation and Picard iteration argument. We start with the trivial solution $(\eta^{(0)},v^{(0)},b^{(0)},q^{(0)})=(\eta^{(1)},v^{(1)},b^{(1)},q^{(1)})=(\text{Id},0,0,0)$. Inductively, given $\{(\eta^{(k)},v^{(k)},b^{(k)},q^{(k)})\}_{0\leq k\leq n}$ for some given $n\in\N^*$, we define $(\eta^{(n+1)},v^{(n+1)},b^{(n+1)},q^{(n+1)})$ by linearisation around $a^{(n)}:=[\p\eta^{(n)}]^{-1}$.

\begin{equation}\label{linearnnn}
\begin{cases}
\p_t \eta^{(n+1)}=v^{(n+1)}+\psi^{(n)} &~~~\text{in } \Omega, \\
\frac{\rho_0}{\Jk^{(n)}}\p_t v^{(n+1)}=(b^{(n)}\cdot\nabla_{\ak^{(n)}}) b^{(n+1)}-\nabla_{\ak^{(n)}} Q^{(n+1)},~~Q^{(n+1)}=q^{(n+1)}+\frac{1}{2}|b^{(n+1)}|^2 &~~~\text{in } \Omega, \\
\frac{\Jk^{(n)} R'(q^{(n)})}{\rho_0}\p_tq^{(n+1)}+\dive_{\ak^{(n)}} v^{(n+1)}=0 &~~~\text{in } \Omega, \\
\p_t b^{(n+1)}-\Delta_{\ak^{(n)}} b^{(n+1)}=(b^{(n)}\cdot\nabla_{\ak^{(n)}}) v^{(n+1)}-b^{(n)}\dive_{\ak^{(n)}} v^{(n+1)} , &~~~\text{in } \Omega, \\
q^{(n+1)}=0,~b^{(n+1)}=\mathbf{0} &~~~\text{on } \Gamma, \\
(\eta^{(n+1)},v^{(n+1)}, b^{(n+1)},q^{(n+1)})|_{\{t=0\}}=(\text{Id},v_0, b_0,q_0).
\end{cases}
\end{equation} Note that the directional derivative $(b\cdot\pak)$ should be given by the $n$-th solution instead of $(n+1)$-th, otherwise the linearisation is not complete and the construction of the solution to the linearized system is much more difficult in the fixed-point argument.

The construction of the solution to the linearized system needs the following function space in the fixed-point argument:

Define the norm $\|\cdot\|_{\XX^r}$ by
\[
\left\|f\right\|_{\XX^r}^2:=\sum_{m=0}^r\sum_{k+l=m}\left\|\p_t^k\p^lf\right\|_0^2
\] and a Banach space on $[0,T]\times\Omega$
\[
\XX(M,T):=\left\{\left(\xi,w,h,\pi\right)\bigg|\left(\xi,w,h,\pi\right)\bigg|_{t=0}=\left(\text{Id},v_0,b_0,q_0\right),\left\|\left(\xi,w,h,\pi\right)\right\|_{\XX}\leq M\right\}
\]where
\[
\left\|\left(\xi,w,h,\pi\right)\right\|_{\XX}:=\left\|\left(\xi,\p_t\xi,w,h,\park h,\pi,\p_t\pi,\park\pi\right)\right\|_{L_t^{\infty}\XX^4}+\|\p_t^5 h\|_{L_t^2L_x^2},
\] and $\ar$ denotes $a^{(n)}$. Here $(\xi,w,h,\pi)$ are the corresponding variables of $(\eta,v,b,q)$. 

We notice that, unlike the nonlinear estimates, the $H^4$-norm of $\park \pi$ is necessary because one has to use $v(T)=v_0+\int_0^T\p_t v$ to bound the $H^4$-norm of $v$. Such term should be estimated by means of elliptic estimates Lemma \ref{GLL}
\[
\|\park q\|_4\lesssim P(\|\erk\|_4)(\|\lapark q\|_3+\|\TP\erk\|_4\|q\|_4).
\] With the help of tangential smoothing, we are able to control $\|\TP\erk\|_4$ by sacrificing a $\kk^{-1}$: This is not allowed in the nonlinear estimates due to the $\kk$-independence requirement, however, in the linearisation part $\kk>0$ is fixed. Therefore, mimicing the previous proof of nonlinear a priori estimates, we are able to 
\begin{itemize}
\item Prove the local well-posedness of the solution to the linearised system \eqref{linearnnn}
\item Establish uniform-in-$n$ estimates of \eqref{linearnnn}
\item Finish the iteration scheme to produce the solution to nonlinear $\kk$-approximation system.
\end{itemize}This finalizes the whole proof.

\bigskip

\section{Preliminary Lemmas}\label{lemmata}

We need the following Lemmas in this manuscript. 

\subsection{Sobolev trace lemma}

\begin{lem}\label{harmonictrace}
Suppose that $s\geq 0.5$ and $u$ solves the boundary-valued problem
\[
\begin{cases}
\Delta u=0~~~&\text{ in }\Omega,\\
u=g~~~&\text{ on }\Gamma
\end{cases}
\] where $g\in H^{s}(\Gamma)$. Then it holds that 
\[
|g|_{s}\lesssim\|u\|_{s+0.5}\lesssim|g|_{s}
\]
\end{lem}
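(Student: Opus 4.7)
The plan is to prove the two inequalities separately, since only the upper bound on $\|u\|_{s+0.5}$ uses harmonicity. The lower bound $|g|_s \lesssim \|u\|_{s+0.5}$ is the standard Sobolev trace theorem on $\Omega = \T^2 \times (0,1)$, which asserts continuity of the trace map $H^{s+1/2}(\Omega) \to H^s(\Gamma)$ for $s \geq 1/2$; no PDE information is used, and I would simply cite this classical fact.

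For the nontrivial bound $\|u\|_{s+0.5} \lesssim |g|_s$, I would Fourier-expand in the tangential torus variables. Writing $g = (g_0,g_1)$ for the restrictions of $g$ to $\T^2\times\{0\}$ and $\T^2\times\{1\}$, and expanding each as $g_j(y') = \sum_{\xi \in \Z^2} \hat{g}_j(\xi)\, e^{i\xi\cdot y'}$, the equation $\Delta u = 0$ decouples into a family of second-order ODEs $\p_{y_3}^2 \hat{u}_\xi(y_3) = |\xi|^2 \hat{u}_\xi(y_3)$ on $(0,1)$ with endpoint data $\hat{u}_\xi(0) = \hat{g}_0(\xi)$ and $\hat{u}_\xi(1) = \hat{g}_1(\xi)$. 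For $\xi \neq 0$ the solution is the explicit linear combination of $\cosh(|\xi|y_3)$ and $\sinh(|\xi|y_3)$ pinned by these two values; for $\xi = 0$ it is affine in $y_3$ and treated separately.

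A direct computation using Plancherel together with integration in $y_3$ yields the pointwise bounds $\int_0^1 |\p_{y_3}^k \hat{u}_\xi(y_3)|^2\, dy_3 \lesssim |\xi|^{2k-1}(|\hat{g}_0(\xi)|^2+|\hat{g}_1(\xi)|^2)$ for $|\xi|\geq 1$ and every integer $k\geq 0$, thanks to the exponential profile of $\cosh$ and $\sinh$. Weighting by $(1+|\xi|^2)^{s+1/2-k}$ and summing in $\xi$ produces $\|u\|_{s+1/2}^2 \lesssim \sum_\xi (1+|\xi|^2)^s (|\hat{g}_0(\xi)|^2+|\hat{g}_1(\xi)|^2) \approx |g|_s^2$, which is the desired estimate. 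The non-integer regularity $s+1/2$ is handled either by real interpolation between consecutive integer vertical orders or directly through the intrinsic Sobolev–Slobodeckij norm combined with the same tangential Fourier characterization.

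The main obstacle is really only the bookkeeping at non-integer index and the separate treatment of the zero tangential mode (the unique mode on which the exponential decay fails). Both are entirely routine. As an alternative route one may prove the case $s=1/2$ first via the Dirichlet energy identity $\int_\Omega |\nabla u|^2 = \int_\Gamma g\,\p_\nu u$ and the Dirichlet-to-Neumann bound $|\p_\nu u|_{-1/2}\lesssim |g|_{1/2}$, then interpolate against higher integer $s$ obtained by differentiating the problem in the tangential variables.
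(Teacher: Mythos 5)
Your proof is correct, and it takes a genuinely different route from the paper's. The paper simply cites the classical Sobolev trace theorem for the lower bound $|g|_s \lesssim \|u\|_{s+1/2}$ (which you also do) and Proposition 5.1.7 of Taylor's book for the nontrivial upper bound $\|u\|_{s+1/2} \lesssim |g|_s$; that citation invokes general elliptic regularity theory for the Dirichlet problem. You instead exploit the special slab geometry $\Omega=\T^2\times(0,1)$ to separate variables, reducing $\Delta u=0$ to a family of explicit ODEs $\hat{u}_\xi'' = |\xi|^2 \hat{u}_\xi$ in the normal variable with two-point Dirichlet data, and then verify the key bound $\int_0^1 |\p_{y_3}^k \hat{u}_\xi|^2\,dy_3 \lesssim |\xi|^{2k-1}(|\hat{g}_0(\xi)|^2+|\hat{g}_1(\xi)|^2)$ by the exponential structure of $\cosh$ and $\sinh$, with the $\xi=0$ (affine) mode handled trivially. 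After weighting and summing you recover exactly $|g|_s^2$, and non-integer $s+1/2$ follows by interpolating the linear solution map $g\mapsto u$ between integer orders. The tradeoff is clear: the paper's citation is shorter and applies to more general smooth domains, whereas your separation-of-variables argument is elementary, self-contained, and transparent about where each power of $|\xi|$ comes from — which is arguably more in the spirit of the rest of the manuscript, where the reference domain is always the slab and tangential Fourier analysis (via $\lkk$, $\TL$, $\mathbb{P}_{\neq 0}$) is already central. Both routes are valid; yours additionally shows that in the applications (where $s+1/2$ is always an integer, namely $1,2,3,4$) no interpolation is even needed.
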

\begin{proof}
The result follows from the standard Sobolev trace lemma and Proposition 5.1.7 in M. Taylor \cite{taylorPDE1}.
\end{proof}

\begin{lem}[\textbf{Normal trace theorem}]\label{normaltrace}
It holds that for a vector field $X$
\begin{equation}\label{ntr}
\left|\TP X\cdot N\right|_{-0.5}\lesssim\|\TP X\|_0+\|\dive X\|_0
\end{equation}
\end{lem}
\begin{proof}
The proof directly follows from testing by a $H^{0.5}(\Gamma)$ function and divergence theorem. See Lemma 3.4 in \cite{gu2016construction}.
\end{proof}

\subsection{Properties of tangential smoothing operator}

As stated in the introduction, we are going to use the tangential smoothing to construct the approximate solutions. Here we list the definition and basic properties which are repeatedly used in this paper. Let $\zeta=\zeta(y_1,y_2)\in C_c^{\infty}(\R^2)$ be a standard cut-off function such that $\text{Spt }\zeta=\overline{B(0,1)}\subseteq\R^2,~~0\leq\zeta\leq 1$ and $\int_{\R^2}\zeta=1$. The corresponding dilation is $$\zeta_{\kk}(y_1,y_2)=\frac{1}{\kk^2}\zeta\left(\frac{y_1}{\kk},\frac{y_2}{\kk}\right),~~\kk>0.$$ Now we define
\begin{equation}\label{lkk0}
\lkk f(y_1,y_2,y_3):=\int_{\R^2}\zeta_{\kk}(y_1-z_1,y_2-z_2)f(z_1,z_2,z_3)\dz_1\dz_2.
\end{equation}

The following lemma records the basic properties of tangential smoothing.
\begin{lem}\label{tgsmooth}
 \textbf{(Regularity and Commutator estimates)} For $\kk>0$, we have

(1) The following regularity estimates:
\begin{align}
\label{lkk11} \|\lkk f\|_s&\lesssim \|f\|_s,~~\forall s\geq 0;\\
\label{lkk1} |\lkk f|_s&\lesssim |f|_s,~~\forall s\geq -0.5;\\ 
\label{lkk2} |\TP\lkk f|_0&\lesssim \kk^{-s}|f|_{1-s}, ~~\forall s\in [0,1];\\  
\label{lkk3} |f-\lkk f|_{L^{\infty}}&\lesssim \sqrt{\kk}|\TP f|_{1/2}.  
\end{align}

(2) Commutator estimates: Define the commutator $[\lkk,f]g:=\lkk(fg)-f\lkk(g)$. Then it satisfies
\begin{align}
\label{lkk4} |[\lkk,f]g|_0 &\lesssim|f|_{L^{\infty}}|g|_0,\\ 
\label{lkk5} |[\lkk,f]\TP g|_0 &\lesssim |f|_{W^{1,\infty}}|g|_0, \\ 
\label{lkk6} |[\lkk,f]\TP g|_{0.5}&\lesssim |f|_{W^{1,\infty}}|g|_{0.5}.
\end{align}
\end{lem}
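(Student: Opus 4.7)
The plan is to treat this as a collection of standard mollifier estimates, since $\lkk$ is just a convolution in the tangential variables $(y_1,y_2)$ alone and the bounds should follow from Young's inequality, Plancherel, and pointwise Taylor expansions of $f$ inside the convolution kernel. Because the mollifier commutes with $\p_3$ and with tangential differentiation, the interior estimate \eqref{lkk11} and the boundary estimate \eqref{lkk1} for nonnegative $s$ reduce to showing $\|\lkk F\|_{L^2}\leq \|F\|_{L^2}$ in the $y_1,y_2$ directions for each fixed $y_3$; this is Young's inequality with $\|\zeta_\kk\|_{L^1}=1$. Fractional orders are then handled by interpolation, and the extension to $s\in[-1/2,0)$ in \eqref{lkk1} is obtained by duality, using the self-adjointness of $\lkk$ on $\Gamma=\T^2\times\{0,1\}$ with respect to the $L^2$ pairing on $\Gamma$.

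For the inverse estimate \eqref{lkk2}, I would work on the Fourier side in the tangential variables: $\widehat{\lkk f}(\xi',y_3)=\hat\zeta(\kk\xi')\hat f(\xi',y_3)$, so
\begin{equation*}
|\TP\lkk f|_0^2 \lesssim \int |\xi'|^2 |\hat\zeta(\kk\xi')|^2 |\hat f(\xi',y_3)|^2\,d\xi',
\end{equation*}
and one uses $|\xi'|^2|\hat\zeta(\kk\xi')|^2 = (\kk|\xi'|)^{2s}|\hat\zeta(\kk\xi')|^2\cdot \kk^{-2s}|\xi'|^{2-2s}$ together with the uniform bound $\sup_{\eta}|\eta|^{s}|\hat\zeta(\eta)|<\infty$ for $s\in[0,1]$, which yields $|\TP\lkk f|_0\lesssim \kk^{-s}|f|_{1-s}$. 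The $L^\infty$ approximation bound \eqref{lkk3} follows from writing $f(y)-\lkk f(y)=\int \zeta_\kk(y-z)(f(y)-f(z))\,dz$ and estimating $|f(y)-f(z)|$ via the Morrey/Sobolev embedding $H^{1/2+\epsilon}\hookrightarrow C^{0,\alpha}$ applied to $\TP f$; more directly on Fourier side, $|f-\lkk f|_{L^\infty}\lesssim \int |1-\hat\zeta(\kk\xi')||\hat f(\xi')|\,d\xi'$ and $|1-\hat\zeta(\kk\xi')|\lesssim (\kk|\xi'|)^{1/2}\wedge 1$ gives the $\sqrt{\kk}$ gain against $|\TP f|_{1/2}$ after Cauchy-Schwarz.

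For the commutator estimates, the key identity is the pointwise representation
\begin{equation*}
[\lkk,f]g(y)=\int_{\R^2}\zeta_\kk(y-z)\big(f(z)-f(y)\big)g(z)\,dz_1\,dz_2.
\end{equation*}
For \eqref{lkk4} I bound $|f(z)-f(y)|\leq 2|f|_{L^\infty}$ and apply Young. For \eqref{lkk5} and \eqref{lkk6}, when the second argument carries a $\TP$, I integrate by parts to move the tangential derivative off $g$; this produces a term where $\TP$ lands on $\zeta_\kk(y-z)$ (handled by the scaling of the kernel together with $|f(z)-f(y)|\leq |\TP f|_{L^\infty}|y-z|$, which absorbs the extra $\kk^{-1}$) and a term where $\TP$ lands on $f$, which is controlled by $|\TP f|_{L^\infty}$ via Young. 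The $H^{1/2}$ version \eqref{lkk6} is obtained by interpolating between $L^2$ and $H^1$, or equivalently by the same representation after taking an additional half-derivative and exploiting the translation-invariance of $\lkk$ on $\T^2$.

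The main technical obstacle I anticipate is keeping the commutator bound at the fractional level \eqref{lkk6} genuinely uniform in $\kk$: one must verify that the commutator with $f$ picks up exactly one derivative of $f$ rather than inheriting the $\kk^{-1}$ cost of a tangential derivative on the kernel. This is precisely what the integration-by-parts trick above is designed for, and once the $L^2$ and $H^1$ endpoints are established with constants depending only on $|f|_{W^{1,\infty}}$, the $H^{1/2}$ case follows by complex or real interpolation on $\T^2$. All other items are straightforward applications of Young's inequality and Plancherel.
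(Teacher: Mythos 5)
Your overall strategy—Young and Plancherel for the regularity estimates, the pointwise commutator kernel plus integration by parts for the $L^2$ commutator bounds, and interpolation between $L^2$ and $H^1$ for the fractional commutator bound—is essentially the paper's route. The paper is terse (it just cites Coutand--Shkoller's Lemma~5.1 for \eqref{lkk4} and \eqref{lkk5} and gives only the algebraic step for \eqref{lkk6}), so your more explicit account is a fair reconstruction. Two specific points are worth flagging.

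First, for \eqref{lkk3} your Fourier-side argument as written bounds $|1-\hat\zeta(\kk\xi')|$ by $(\kk|\xi'|)^{1/2}\wedge 1$, and the ensuing Cauchy--Schwarz then produces a $\sqrt{\kk\log(1/\kk)}$ factor rather than $\sqrt\kk$: the lattice sum $\sum_{1\le|\xi'|\le 1/\kk}\kk/|\xi'|^2$ diverges logarithmically. Since $\hat\zeta\in C^\infty$ with $\hat\zeta(0)=1$, the correct bound is $|1-\hat\zeta(\eta)|\lesssim\min(|\eta|,1)$ (or even $\min(|\eta|^2,1)$ for even $\zeta$), and with that the log disappears. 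Alternatively, the paper's proof avoids Fourier analysis altogether: write $f-\lkk f=\int\zeta_\kk(z)\bigl(f(\cdot-z)-f\bigr)\,dz$, use the mean value theorem and $|z|\le\kk$ on the support to get $\lesssim\kk\,|\zeta_\kk|_{L^{4/3}}\,|\TP f|_{L^4}$, and finish with $|\zeta_\kk|_{L^{4/3}}\sim\kk^{-1/2}$ and $H^{1/2}(\T^2)\hookrightarrow L^4(\T^2)$. Your Morrey alternative as stated would require $|\TP f|_{1/2+\epsilon}$ rather than $|\TP f|_{1/2}$, so it does not directly give the claimed inequality.

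Second, for \eqref{lkk6} you correctly identify interpolation as the mechanism, but the $H^1$ endpoint should be exhibited. The paper uses the identity
\begin{equation*}
\TP\bigl([\lkk,f]\TP g\bigr)=[\lkk,\TP f]\TP g+[\lkk,f]\TP^2 g,
\end{equation*}
which, combined with \eqref{lkk4} and \eqref{lkk5}, gives $|\TP[\lkk,f]\TP g|_0\lesssim|f|_{W^{1,\infty}}|g|_1$; interpolating this against \eqref{lkk5} yields \eqref{lkk6}. Your appeal to translation-invariance is pointing at the same fact, but the argument is only complete once this identity (or your integration-by-parts representation at the $H^1$ level) is written down.
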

\begin{proof}
(1): The estimates \eqref{lkk1} and \eqref{lkk2} follows directly from the definition \eqref{lkk0} and the basic properties of convolution. \eqref{lkk3} is derived by using Sobolev embedding and H\"older's inequality:
\begin{align*}
|f-\lkk f|&=\left|\int_{\R^2\cap B(0,\kk)} \zeta_{\kk}(z) (f(y-z)-f(y))\dz\right| \\
&\lesssim |\zeta_{\kk}|_{L^{4/3}}|\kk\TP f|_{L^4} \\
&\lesssim \sqrt{\kk}|\zeta|_{L^{4/3}} |f|_{1.5}.
\end{align*}

(2): The first three estimates can be found in Lemma 5.1 in Coutand-Shkoller \cite{coutand2010LWP}. To prove the fourth one, we note that
\[
\TP([\lkk,f]g)=\lkk(\TP f\TP g)+\lkk(f\TP^2 g)-\TP f\lkk \TP g-f \lkk \TP^2g=[\lkk,\TP f]\TP g+[\lkk, f]\TP^2 g.
\]
From \eqref{lkk4} and \eqref{lkk5} we know 
\begin{equation}\label{lkk7}
|\TP[\lkk,f]g|_0\lesssim|\TP f|_{L^{\infty}}|\TP g|_{0}+|f|_{W^{1,\infty}}|\TP g|_0\lesssim |f|_{W^{1,\infty}}|g|_1.
\end{equation} Therefore \eqref{lkk6} follows from the interpolation of \eqref{lkk5} and \eqref{lkk7}.
\end{proof}

\subsection{Elliptic estimates}
\begin{lem} \label{hodge} \textbf{(Hodge-type decomposition)} Let $X$ be a smooth vector field and $s\geq 1$, then it holds that
\begin{equation}
\|X\|_s\lesssim\|X\|_0+\|\curl X\|_{s-1}+\|\dive X\|_{s-1}+|X\cdot N|_{s-0.5}.
\end{equation}
\end{lem}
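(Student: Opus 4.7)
The plan is to induct on $s$, with the base case $s=1$ following from a direct integration-by-parts identity and the inductive step reducing normal derivatives to $\curl$, $\dive$, and tangential derivatives.

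\textbf{Base case $s=1$.} I would start from the pointwise identity $|\nabla X|^2=|\curl X|^2+(\partial_j X^k)(\partial_k X^j)$, obtained by expanding $|\curl X|^2=\epsilon_{ijk}\epsilon_{ilm}\partial_j X^k\,\partial_l X^m$ via the $\epsilon$-$\delta$ identity. Integrating by parts twice in the cross term yields
\begin{align*}
\int_\Omega(\partial_j X^k)(\partial_k X^j)\dy=\int_\Omega(\dive X)^2\dy+\int_\Gamma\bigl[(\partial_j X^k)X^j N_k-(\dive X)(X\cdot N)\bigr]\dS.
\end{align*}
Since $N=\pm e_3$ is constant on $\Gamma$, the boundary integrand reduces to $\pm[\TP_i X^3\cdot X^i-\TP_i X^i\cdot X^3]$ with $i=1,2$ (the $\partial_3 X^3\cdot X^3$ contributions cancel), and a further tangential integration by parts on the closed torus $\mathbb{T}^2$ rewrites this as $\mp 2(X\cdot N)(\TP_1 X^1+\TP_2 X^2)$. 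By duality and the trace inequality this is bounded by $|X\cdot N|_{1/2}\|X\|_1$, and Cauchy--Schwarz with a small constant absorbs $\|X\|_1^2$ into the left side to close the $s=1$ estimate.

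\textbf{Inductive step.} Assuming the estimate at level $s-1\geq 1$, write $\|X\|_s^2\lesssim \|X\|_0^2+\|\TP X\|_{s-1}^2+\|\partial_3 X\|_{s-1}^2$. For the tangential piece, note that $\curl$, $\dive$, and the trace $(\cdot)\cdot N$ all commute with $\TP$ since $N$ is constant on $\Gamma$; applying the induction hypothesis to the vector field $\TP X$ bounds
\begin{align*}
\|\TP X\|_{s-1}\lesssim \|X\|_0+\|\curl X\|_{s-1}+\|\dive X\|_{s-1}+|X\cdot N|_{s-1/2}.
\end{align*}
For the normal derivative, I would use the purely algebraic identities
\begin{align*}
\partial_3 X^3&=\dive X-\TP_1 X^1-\TP_2 X^2,\\
\partial_3 X^1&=\TP_1 X^3+(\curl X)_2,\\
\partial_3 X^2&=\TP_2 X^3-(\curl X)_1,
\end{align*}
which give $\|\partial_3 X\|_{s-1}\lesssim \|\dive X\|_{s-1}+\|\curl X\|_{s-1}+\|\TP X\|_{s-1}$, the last factor already bounded.

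\textbf{Main obstacle.} The delicate point is the boundary integral in the base case: naively it looks like one needs $X\in H^1(\Gamma)$, which the trace theorem does not supply. What saves the argument is the flat-cylinder geometry $\Omega=\mathbb{T}^2\times(0,1)$: tangential integration by parts on the closed torus is free, and lets us move a $\TP$ off the bulk factor of $X$ so that only a half-derivative on $X\cdot N$ remains, exactly matching the $|X\cdot N|_{s-1/2}$ term in the statement. In a curved domain the analogous step would require the second fundamental form; here the computation is entirely elementary.
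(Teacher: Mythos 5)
Your proof is correct (for integer $s$, which is all the paper uses), but it takes a genuinely different route from the paper's. The paper disposes of the lemma in one line by citing the identity $-\Delta X=\curl\curl X-\nabla\dive X$, which implicitly reduces the estimate to elliptic regularity for the componentwise Laplacian with the boundary data encoded in $X\cdot N$ — the standard Hodge-decomposition argument. Your argument bypasses elliptic theory entirely: the $s=1$ case is a hands-on integration-by-parts identity, and the inductive step reduces normal derivatives to $\curl$, $\dive$, and tangential derivatives via the componentwise algebraic identities. This is more elementary and more self-contained, and it makes transparent exactly why only half a derivative of $X\cdot N$ is needed: the two tangential integrations by parts on the flat boundary $\T^2$ redistribute one derivative symmetrically between $X\cdot N$ and $X$, a step which in a curved domain would pick up second-fundamental-form terms — your explicit computation shows this lemma is ``free'' precisely because the reference domain is a flat slab. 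Two small caveats: the induction only covers integer $s\ge 1$ (the non-integer case would need interpolation, but the paper applies the lemma only at $s=1,2,3,4$); and in the inductive step the lower-order term $\|\TP X\|_0$ that the inductive hypothesis produces is bounded by $\|X\|_1$, so it should be controlled by one more appeal to the already-established base case before being subsumed into the right-hand side — as written your displayed bound for $\|\TP X\|_{s-1}$ quietly makes this substitution.
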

\begin{proof}
This follows from the well-known identity $-\Delta X=\curl\curl X-\nabla\dive X$.
\end{proof}

\begin{lem}\label{GLL}\textbf{(Christodoulou-Lindblad elliptic estimate)}
The following elliptic estimate holds for $r\geq 2$,
\begin{align}
\|\pak f\|_{H^r} \leq  C(\|\ek\|_{r})\Big(\|\lapak  f\|_{r-1}+\|\cp \ek\|_{r}\|f\|_{r}\Big).
\end{align}
\end{lem}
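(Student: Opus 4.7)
The plan is to prove the estimate by induction on $r \geq 2$, following the strategy of Christodoulou--Lindblad. The starting point is the pointwise identity
\[
\lapak f = \ak^{\mu\alpha}\ak^{\nu\alpha}\p_\mu\p_\nu f + \ak^{\mu\alpha}(\p_\mu\ak^{\nu\alpha})\p_\nu f,
\]
in which the symmetric matrix $g^{\mu\nu} := \ak^{\mu\alpha}\ak^{\nu\alpha}$ is uniformly positive definite, with ellipticity constants depending only on $\|\ek\|_r$ (via the pointwise bound on $\Jk^{-1} = \det[\p\ek]^{-1}$ from Sobolev embedding). Since $g^{33}$ is in particular bounded below, we can algebraically solve for the pure normal second derivative:
\[
\p_3^2 f \;=\; (g^{33})^{-1}\Big(\lapak f - \sum_{(\mu,\nu)\neq(3,3)} g^{\mu\nu}\p_\mu\p_\nu f - \ak^{\mu\alpha}(\p_\mu\ak^{\nu\alpha})\p_\nu f\Big),
\]
where every term on the right either contains $\lapak f$ or carries at least one tangential derivative $\TP$ on $\p f$. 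This is the only mechanism by which we will ever pay for normal regularity of $f$.

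For the base case $r=2$, the algebraic identity immediately yields $\|\p_3^2 f\|_0 \lesssim C(\|\ek\|_2)\bigl(\|\lapak f\|_0 + \|\TP\p f\|_0 + \|\p f\|_0\bigr)$. Combined with the trivial bounds $\|\TP^2 f\|_0 + \|\TP\p_3 f\|_0 \lesssim \|f\|_2$, this produces the full $H^2$ estimate on $\p f$, and multiplying by $\ak$ (controlled in $L^\infty$ by $\|\ek\|_2$ and Sobolev embedding) gives the desired bound on $\pak f$. For the inductive step, apply tangential derivatives $\TP^s$, $s \le r{-}1$, to the equation $\lapak f = \lapak f$; the commutator $[\lapak,\TP^s]f$ expands by Leibniz into a sum of terms of the form $(\TP^{j}\p\ek)(\TP^{k}\p^{2-i} f)$ with $j+k \le s$. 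At top order the factor $\TP^s\p\ek$ — purely tangential derivatives of $\ek$ — appears, and this is precisely where the $\|\cp\ek\|_r\,\|f\|_r$ term on the right is generated. All lower-order pieces are absorbed by Moser-type product estimates and the inductive hypothesis applied to $\TP f$.

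To close the bound on $\p_3^2 \TP^{r-2}f$ (the only derivative combinations not already tangentially controlled), reapply the algebraic identity once more, which trades the two normal derivatives for $\lapak (\TP^{r-2} f)$ plus mixed terms with at most one $\p_3$; the first piece is reduced to $\TP^{r-2}\lapak f$ plus a commutator handled as above, and the second has enough tangential derivatives to be covered by the previous step. Iterating over $0 \le s \le r-1$ assembles the full $H^r$ norm of $\pak f$.

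The main obstacle is not the existence of the bound but the \emph{precise tracking of regularity on $\ek$}: one must verify that whenever a top-order derivative $\TP^s\p\ek$ falls on the coefficient, it is purely tangential (so it is controlled by $\|\cp\ek\|_r$ rather than the strictly stronger $\|\ek\|_{r+1}$), and that any normal derivative of $\ek$ that appears in the commutator is paired with a strictly lower-order derivative of $f$. This is possible because the only normal second derivative of $f$ in sight is generated algebraically by $\lapak f$ itself, never by differentiating the coefficient $\ak$; the tangential derivative $\TP$ commutes with all $\p_\mu$, so the Leibniz expansion of $[\lapak,\TP^s]$ never forces a normal derivative to land on $\ek$ at the top order. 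This structural observation is what makes the estimate sharp in exactly the form needed for the tangential smoothing scheme in the main text.
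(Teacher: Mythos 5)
The paper does not prove this lemma; it simply cites Ginsberg--Lindblad--Luo, Proposition 5.3 (the statement goes back to Christodoulou--Lindblad). Your strategy---inverting the leading $g^{33}$ coefficient to express $\p_3^2 f$ in terms of $\lapak f$ and strictly more tangential data, then propagating through $\TP^s$ and tracking where derivatives fall on $\ek$---is exactly the mechanism behind the cited estimate, and your closing structural observation (that $\TP$ commutes with $\p_\mu$, hence $[\lapak,\TP^s]$ puts only tangential derivatives on the coefficients, which is why the bound carries $\|\cp\ek\|_r$ rather than $\|\ek\|_{r+1}$) is the right reason the estimate has this form. So the route is sound and is essentially the one the reference takes.

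There is, however, a genuine derivative-counting error in the base case $r=2$. You bound $\|\p_3^2 f\|_0$ from the algebraic identity and combine it with the trivial $\|\TP^2 f\|_0$, $\|\TP\p_3 f\|_0 \lesssim \|f\|_2$, which controls $\|\p^2 f\|_0$. But $\|\pak f\|_{H^2}$ contains \emph{three} derivatives of $f$ at top order (e.g.\ $\|\ak\,\p^3 f\|_0$), so what you have established is the $r=1$ estimate---which the lemma itself warns must be stated with $L^\infty$ in place of $\|\cdot\|_1$---not the claimed $r=2$ case. To launch the induction you already have to differentiate the algebraic identity once more (one $\TP$ for $\TP\p_3^2 f$, one $\p_3$ for $\p_3^3 f$, absorbing the resulting derivative on $\ak$), so the base case is no simpler than the inductive step; as written the induction has no genuine base. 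A secondary issue is that the top-order commutator term $\TP^{r-1}(\ak\ak)\,\p^2 f$ requires a product estimate that is straightforward once $r\ge 4$ (put $\p^2 f$ in $L^\infty$) but is borderline at $r=2,3$, where you need to distribute derivatives via fractional Leibniz/Kato--Ponce and, in effect, use the boundary condition $f|_\Gamma=0$; your sketch does not close this, so you should either restrict the argument to the $r$ you actually need or consult the cited reference for the low-$r$ case.
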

\begin{proof}
See Ginsberg-Lindblad-Luo \cite{GLL2019LWP} Proposition 5.3. When $r=1$, then $\|\cdot\|_r$ norm should be replaced by $\|\cdot\|_{L^{\infty}}$.
\end{proof}

\section{A priori estimates of the nonlinear approximation system}\label{nonlinear}

For $\kk>0$, we introduce the nonlinear $\kk$-approximation system. 

\begin{equation}\label{nonlinearkk}
\begin{cases}
\p_t \eta=v+\psi &~~~\text{in } \Omega, \\
\rho_0\Jk^{-1}\p_t v=(b\cdot\pak) b-\pak Q,~~Q=q+\frac{1}{2}|b|^2 &~~~\text{in } \Omega, \\
\frac{\Jk R'(q)}{\rho_0}\p_tq+\diva v=0 &~~~\text{in } \Omega, \\
\p_t b-\lapak b=(b\cdot\pak) v-b\diva v, &~~~\text{in } \Omega, \\
q=0,~b=\mathbf{0} &~~~\text{on } \Gamma, \\
(\eta,v, b,q)|_{\{t=0\}}=(\text{Id},v_0, b_0,q_0).
\end{cases}
\end{equation} Here, $\Omega:=\T^2\times(0,1)$ is the reference domain, $\Gamma:=\p\Omega=\T^2\times(\{0\}\cup\{1\})$ is the boundary. The notation  $\ek$ is the smoothed version of the flow map $\eta$ defined by $\ek:=\lkk^2\eta$, $\ak:=(\p\ek)^{-1}$ is the cofactor matrix of $[\p\ek]$, and $\Jk:=\det[\p\ek]$ is the Jacobian determinant. The term $\psi=\psi(\eta,v)$ is a correction term which solves the Laplacian equation
\begin{equation}\label{psi}
\begin{cases}
\Delta \psi=0  &~~~\text{in }\Omega, \\
\psi=\TL^{-1}\mathbb{P}_{\neq 0}\left(\TL\eta_{\beta}\ak^{i\beta}\TP_i\lkk^2 v-\TL\lkk^2\eta_{\beta}\ak^{i\beta}\TP_i v\right) &~~~\text{on }\Gamma,
\end{cases}
\end{equation}where $\mathbb{P}_{\neq 0} $ denotes the standard Littlewood-Paley projection in $\T^2$  which removes the zero-frequency part. $\TL:=\p_1^2+\p_2^2$ denotes the tangential Laplacian operator.

\bigskip

\begin{rmk}
~

\begin{enumerate}
\item The correction term $\psi\to 0$ as $\kk\to 0$. We introduce such a term to eliminate the higher order boundary terms which appears in the tangential estimates of $v$. These higher order boundary terms are zero when $\kk=0$ but are out of control when $\kk>0$.
\item The Littlewood-Paley projection is necessary here because we will repeatedly use $$|\TL^{-1}\mathbb{P}_{\neq 0}f|_{s}\approx |\mathbb{P}_{\neq 0}f|_{{H}^{s-2}}\approx |f|_{\dot{H}^{s-2}},$$ which can be proved by using Bernstein inequality.
\item The initial data is the same of origin system because the compatibility conditions stay unchanged after mollification by $\ak(0)=a(0)=\text{Id}$. Such initial data has been constructed in the author's previous paper \cite{ZhangCRMHD1}, so these steps are omitted in this manuscript.
\item The precise form of the commutators in the remaining context can be found in Section 4.4 in the author's previous work \cite{ZhangCRMHD1}. Details are omitted in the presenting manuscript.
\end{enumerate}
\end{rmk}

Now, we define the energy functional of \eqref{nonlinearkk}
\begin{equation}\label{EEkk}
\EE_\kk(T):=E_\kk(T)+H_\kk(T)+W_\kk(T)+\left\|\p_t^{4-k}\left(\left(b\cdot\pak\right)b\right)\right\|_{k}^2,
\end{equation}
where
\begin{align}
\label{Ekk} E_{\kk}(T):=\left\|\eta\right\|_4^2+\left|\ak^{3\alpha}\TP^4\lkk\eta_{\alpha}\right|_0^2&+\sum_{k=0}^4 \left(\left\|\p_t^{4-k}v\right\|_{k}^2+\left\|\p_t^{4-k}b\right\|_{k}^2+\left\|\p_t^{4-k} q\right\|_k^2\right),\\
\label{Hkk} H_{\kk}(T)&:=\int_0^T\io\left|\p_t^5 b\right|^2\dy\dt+\left\|\p_t^4 b\right\|_1^2,\\
\label{Wkk} W_{\kk}(T)&:=\left\|\p_t^5 q\right\|_0^2+\left\|\p_t^4 q\right\|_1^2
\end{align}denote the energy functional of fluid, higher order heat equation of $b$, and wave equation of $q$, respectively.

The context of this section is the uniform-in-$\kk$ a priori estimates of \eqref{nonlinearkk}.  

\begin{prop}\label{nonlinear1}
There exists some $T>0$ independent of $\kk$, such that the energy functional $E_\kk$ satisfies
\begin{equation}\label{EEkk1}
\sup_{0\leq t\leq T} \EE_\kk(t)\leq P(\|v_0\|_{4},\|b_0\|_5,\|q_0\|_4,\|\rho_0\|_4),
\end{equation} provided the following assumptions hold for all $t\in[0,T]$
\begin{align}
\label{taylor1} -\p_3 Q(t)\geq c_0/2~~~&\text{on }\Gamma,\\
\label{small1} \|\Jk(t)-1\|_3+\|\text{Id}-\ak(t)\|_3\leq \epsilon~~~&\text{in }\Omega.
\end{align}
\end{prop}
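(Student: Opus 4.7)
The plan is to bootstrap off the continuity assumptions \eqref{taylor1}--\eqref{small1}, establish a closed energy inequality of the form $\EE_\kk(T)\leq \PP_0 + \int_0^T P(\EE_\kk(t))\,dt$ uniformly in $\kk$, and then invoke Gronwall to obtain a time $T>0$ on which $\EE_\kk$ stays bounded. The key structural observation I would exploit throughout is the ``time-derivative ladder'' described in Section \ref{strategy}: the momentum equation lets us pass from $\p^{k+1}\p_t^l q$ to $\p^k\p_t^{l+1}v$ (modulo lower-order terms involving $b$), and the continuity equation lets us pass from $\diva(\p^k\p_t^l v)$ back to $\p^{k-1}\p_t^{l+1}q$. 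Since the resistivity will yield direct control of every occurrence of $b$ and its Lorentz force as lower order, this ladder reduces the full spatial estimate of $E_\kk$ to the purely temporal quantity $\sum_{k=0}^4\|\p_t^{4-k}v\|_k^2+\|\p_t^{4-k}q\|_k^2$ together with the top tangential estimate and the boundary Taylor-sign term.

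The first block of estimates uses Lemma \ref{GLL}. Because $b=\mathbf{0}$ on $\Gamma$, the Lorentz force $(b\cdot\pak)b$ vanishes there, so for each $k+l\leq 4$ I would write
\[
\left\|\p_t^l(b\cdot\pak)b\right\|_{k+1}\lesssim P(\|\ek\|_3)\left(\left\|\lapak \p_t^l((b\cdot\pak)b)\right\|_{k-1}+\|\TP\ek\|_k\cdot(\text{l.o.t.})\right),
\]
and use the heat equation $\p_t b-\lapak b=(b\cdot\pak)v-b\diva v$ to trade the Laplacian for $\p_t b$ and first derivatives, bounding everything in terms of $E_\kk$ and $H_\kk$. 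The same elliptic device controls $\|\pak q\|_4$ via the wave equation $\frac{\Jk R'(q)}{\rho_0}\p_t^2q-\lapak q=b\cdot\lapak b+w_0$ after again eliminating $\lapak b$ by the heat equation, and controls $\|\pak\p_t^l b\|_{5-l}$. In parallel I would use the Hodge decomposition of Lemma \ref{hodge} on $v$: the divergence is given by $\diva v\approx -\p_t q$, the curl is controlled by taking $\curla$ of the momentum equation (the pressure gradient drops out and the Lorentz force is already controlled), and the boundary trace $v\cdot N$ is reduced to interior tangential derivatives plus $\diva v$ as in the identity $\p_3 v^3=\dive v-\TP_iv^i$.

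The hard core of the proof is the top tangential estimate, which I would perform by applying $\tpl=\TP^2\TL$ to the momentum equation and introducing the Alinhac good unknowns $\VV=\tpl v-\tpl\ek\cdot\pak v$ and $\QQ=\tpl Q-\tpl\ek\cdot\pak Q$. These satisfy
\[
\rho_0\Jk^{-1}\p_t\VV+\pak\QQ=\FF,\qquad \QQ|_\Gamma=-\tpl\ek_\beta\ak^{3\beta}\p_3Q,
\]
with $\FF$ a controllable commutator/forcing term. Testing against $\VV$ and integrating by parts produces, after inserting $Q=q+\tfrac12|b|^2$ and using $\diva v\approx-\p_t q$, the energy pieces $\tfrac12\io\frac{\Jk R'(q)}{\rho_0}|\tpl q|^2$ in the interior and $\tfrac12\ig(-\p_3 Q)|\ak^{3\alpha}\TP^4\lkk\eta_\alpha|^2\dS$ on the boundary, the latter being positive by \eqref{taylor1}. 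The delicate point is that after commuting out the mollifier on the boundary, two residual terms of the schematic form $\int \p_3Q\,\ak^{3\beta}\TP^4\lkk\eta_\beta\,\ak^{3\gamma}\TP_i\lkk^2v_\gamma\,\ak^{i\alpha}\TP^4\lkk\eta_\alpha$ appear which cancel only when $\kk=0$; I would absorb this defect by using the correction $\psi$ from \eqref{psi}, whose boundary value is precisely designed so that replacing $\p_t\eta=v$ by $\p_t\eta=v+\psi$ reinstates the missing cancellation (after using $|\TL^{-1}\PP_{\neq 0}f|_s\approx|f|_{\dot H^{s-2}}$ to show that $\psi$ itself is of lower order and does not spoil the ladder). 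I expect this boundary cancellation via $\psi$ to be the main obstacle; all other terms will be closed by routine commutator and product estimates together with Lemma \ref{tgsmooth}.

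Finally I would close $H_\kk$ and $W_\kk$ by differentiating the heat equation of $b$ and the wave equation of $q$ in time up to four and five times respectively. Because $\p_t$ is tangential to $\Gamma$, no boundary term is generated on integration by parts, and $\int_0^T\io|\p_t^5 b|^2$ plus $\|\p_t^4 b\|_1^2$ (resp.\ the $W_\kk$ pieces) are absorbed as natural parabolic (resp.\ hyperbolic) energies; the resistivity again trades the $\lapak b$ coupling on the right-hand side of the wave equation for first-order terms. Summing the Lorentz-force elliptic bound, the div-curl-tangential estimate for $(v,q)$, and the heat/wave estimates, and using \eqref{small1} to absorb all factors of $\|\Jk^{-1}\|,\|\ak\|$ into constants, yields
\[
\EE_\kk(T)\leq \PP_0+T\,P\!\left(\sup_{[0,T]}\EE_\kk\right)+\int_0^T P(\EE_\kk(t))\,dt,
\]
from which Gronwall and a continuity argument give \eqref{EEkk1} on a short time interval independent of $\kk$, while also propagating \eqref{taylor1}--\eqref{small1} by the H\"older continuity of $\p_3Q$ and $\ak$.
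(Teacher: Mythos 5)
Your proposal follows essentially the same route as the paper: elliptic estimates (Lemma \ref{GLL}) plus the heat equation for $b$ and the Lorentz force, the div--curl--boundary reduction of $v$ to full time derivatives, Alinhac good unknowns together with the correction $\psi$ to recover the boundary cancellation in the $\tpl$-estimate, common control of the top time-differentiated heat and wave equations for $H_\kk$ and $W_\kk$, and finally a Gronwall/continuity argument. The only minor slip is writing $\|\pak q\|_4$ where the relevant quantity in the nonlinear energy is $\|\p_t q\|_4\approx\|\pak\p_t q\|_3$ (the paper uses $\|\pak q\|_4$ only in the $\kk$-dependent linearized iteration, not in the uniform-in-$\kk$ a priori bound), but this does not affect the structure of the argument.
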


\begin{rmk}
The a priori assumptions can be easily justified once the energy bounds are established by using $\ak(T)-\text{Id}=\int_0^T \p_t\ak=\int_0^T \ak:\p_t\p\ek:\ak\dt$ and the smallness of $T$. See Lemma \ref{coeff}.
\end{rmk}

In Section \ref{linear}, we will prove the local well-posedness of \eqref{nonlinearkk} in an $\kk$-dependent time interval $[0,T_{\kk}]$.  Therefore, the uniform-in-$\kk$ a priori estimate guarantees that the solution $(\eta(\kk),v(\kk),b(\kk),q(\kk))$ to \eqref{nonlinearkk} converges to the solution to the original system as $\kk\to 0$, i.e., local existence of the solution to free-boundary compressible resistive MHD system is established.  For simplicity, we omit the $\kk$ and only write $(\eta,v,b,q)$ in this manuscript.

\subsection{Estimates of the correction term}\label{noneta}

First we bound the flow map and the correction term together with their smoothed version by the quantities in $\EE_{\kk}$. The following estimates will be repeatedly use in this section.

\begin{lem}\label{etapsi}
We have the following estimates for $(v,\psi,\eta)$ in \eqref{nonlinearkk}. 
\begin{align}
\label{eta4} \|\ek\|_4&\lesssim \|\eta\|_4, \\
\label{psi4} \|\psi\|_4&\lesssim P(\|\eta\|_4,\|v\|_3), \\
\label{psit4}\|\p_t \psi\|_4&\lesssim P(\|\eta\|_4,\|v\|_4,\|\p_t v\|_3),\\
\label{psitt3}\|\p_t^2 \psi\|_3&\lesssim P(\|\eta\|_4,\|v\|_4, \|\p_tv\|_3, \|\p_t^2 v\|_2), \\
\label{psittt2}\|\p_t^3\psi\|_2&\lesssim P(\|\eta\|_4,\|v\|_4, \|\p_tv\|_3, \|\p_t^2 v\|_2, \|\p_t^3v\|_{1}),\\
\label{psitttt1}\|\p_t^4\psi\|_1&\lesssim P(\|\eta\|_4,\|v\|_4, \|\p_tv\|_3, \|\p_t^2 v\|_2, \|\p_t^3v\|_{1},\|\p_t^4 v\|_0).
\end{align}
and
\begin{align}
\label{etat4} \|\p_t\ek\|_4\lesssim\|\p_t\eta\|_4&\lesssim P(\|\eta\|_4,\|v\|_4) , \\
\label{etatt3}\|\p_t^2\ek\|_3\lesssim\|\p_t^2\eta\|_3&\lesssim P(\|\eta\|_4,\|v\|_4,\|\p_t v\|_3) , \\
\label{etattt2}\|\p_t^3\ek\|_2\lesssim\|\p_t^3\eta\|_2&\lesssim P(\|\eta\|_4,\|v\|_4,\|\p_t v\|_3,\|\p_t^2 v\|_2), \\
\label{etatttt1}\|\p_t^4\ek\|_1\lesssim\|\p_t^4\eta\|_1&\lesssim  P(\|\eta\|_4,\|v\|_4, \|\p_tv\|_3, \|\p_t^2 v\|_2, \|\p_t^3v\|_{1})\\
\label{etattttt0}\|\p_t^5\ek\|_0\lesssim\|\p_t^5\eta\|_0&\lesssim  P(\|\eta\|_4,\|v\|_4, \|\p_tv\|_3, \|\p_t^2 v\|_2, \|\p_t^3v\|_{1},\|\p_t^4 v\|_0).
\end{align}
\end{lem}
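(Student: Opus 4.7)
The plan is to reduce everything to two ingredients: the harmonic elliptic regularity of $\psi$ (Lemma \ref{harmonictrace}) combined with the $\TL^{-1}\mathbb{P}_{\neq 0}$ identity recorded in the remark after \eqref{psi}, and the evolution identity $\p_t \eta = v + \psi$. The mollifier bounds on $\ek$ are trivial: since $\ek = \lkk^2 \eta$ and $\lkk$ commutes with $\p_t$, the estimate \eqref{lkk11} applied in each time slice yields $\|\p_t^{j}\ek\|_s \lesssim \|\p_t^{j}\eta\|_s$ for every $j,s$. This gives the first inequality in each of \eqref{etat4}--\eqref{etattttt0} for free, so it only remains to bound $\|\p_t^{j}\eta\|_s$ in terms of $v$ and $\psi$, and then to bound $\p_t^{j}\psi$ in terms of the listed norms of $\eta$ and $v$.

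For $\psi$ I would start from $\|\psi\|_s \lesssim |\psi|_{s-1/2}$ (Lemma \ref{harmonictrace}), then use $|\TL^{-1}\mathbb{P}_{\neq 0} F|_{s-1/2} \lesssim |F|_{s-5/2}$ with $F := \TL\eta_{\beta}\ak^{i\beta}\TP_i\lkk^{2}v - \TL\lkk^{2}\eta_{\beta}\ak^{i\beta}\TP_i v$, so that $\|\psi\|_s \lesssim |F|_{s-5/2}$. For \eqref{psi4} this reduces to $|F|_{3/2}$ on the $2$-torus $\Gamma$; since $H^{3/2}(\T^2)$ is a Banach algebra and $\lkk^2$ is uniformly bounded on every $H^\sigma$, the Leibniz rule on $F$ bounds everything by $|\TL\eta|_{3/2}\,|\ak|_{3/2}\,|\TP v|_{3/2}$, which is controlled by $\|\eta\|_4$ (for the trace $|\eta|_{7/2}$ and for $|\ak|_{3/2}$ via $\ak = (I+(\p\ek - I))^{-1}$ together with \eqref{small1}) and by $\|v\|_3$ (for $|\TP v|_{3/2}$), giving \eqref{psi4}.

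The time-differentiated versions \eqref{psit4}--\eqref{psitttt1} follow by the same scheme applied to $\p_t^{j}\psi$, which is harmonic in $\Omega$ with boundary datum $\p_t^{j}(\TL^{-1}\mathbb{P}_{\neq 0} F)$. Each $\p_t$ falls on one of $\eta$, $\ak$, or $v$: on $\eta$ it produces $v+\psi$; on $\ak$ it produces $-\ak(\p\p_t\ek)\ak$ (which opens another slot for $\p_t$ to continue); on $v$ it produces $\p_t v$. Bookkeeping the resulting product of factors in the appropriate $H^{s-5/2}(\Gamma)$ via the standard Kato--Ponce/algebra estimates on $\T^2$ yields the asserted drop of one spatial derivative per additional $\p_t$, and generates exactly the combinations of $\|\p_t^{\ell}v\|_{4-\ell}$ listed on the right-hand side of \eqref{psit4}--\eqref{psitttt1}; the $\psi$-contributions inside $\p_t\eta$ are always at a strictly lower order and absorbed by induction. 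Finally, \eqref{etat4}--\eqref{etattttt0} follow by plugging $\p_t\eta = v+\psi$ into $\p_t^{j}\eta = \p_t^{j-1}(v+\psi)$ and inserting the $\psi$ estimates already proved.

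The main bookkeeping obstacle, and the only non-routine point, is the simultaneous handling of the two gains provided by the boundary datum of $\psi$: the $+2$ from $\TL^{-1}\mathbb{P}_{\neq 0}$ and the $+1/2$ from the harmonic trace identity must both be used to place $\p_t^{j}F$ in the correct Sobolev algebra on $\T^2$ (namely $H^{s-5/2}$), since otherwise the $\ak$ factors — whose regularity is only that of $\p\ek$ — would force a spurious loss. The a priori smallness assumption \eqref{small1} is needed precisely to invert $\p\ek$ and to keep $\|\ak\|_3$ bounded throughout this bookkeeping.
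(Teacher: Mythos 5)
Your outline---reduce to the harmonic boundary-value problem for $\p_t^j\psi$, harvest $+1/2$ from Lemma~\ref{harmonictrace} and $+2$ from $\TL^{-1}\mathbb{P}_{\neq 0}$, estimate the boundary datum on $\T^2$, then feed back into $\p_t^{j+1}\eta=\p_t^j v+\p_t^j\psi$---is the same skeleton the paper uses, and your argument is fine for $j=0,1,2$. But there is a genuine gap at $j=3$ and $j=4$: the step you summarize as ``standard Kato--Ponce/algebra estimates on $\T^2$ in $H^{s-5/2}$'' is not available there, and you have misdiagnosed the difficulty. The $\ak$ factors are not the problem (they are controlled by $\|\ek\|_4$ and \eqref{small1}); the problem is the factor carrying all the time derivatives.

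For $j=3$ the boundary norm to control is $|\p_t^3 F|_{-1/2}$, and the leading terms are $(\TL\p_t^3\eta)\,\ak\,\TP\lkk^2 v$ and $\TL\lkk^2\eta\,\ak\,\TP\p_t^3 v$. Since $\p_t^3\eta\in H^2(\Omega)$ and $\p_t^3 v\in H^1(\Omega)$ only, $\TL\p_t^3\eta$ and $\TP\p_t^3 v$ are merely in $L^2(\Omega)$ and have no classical trace; after tracing one derivative earlier, they live in $\dot H^{-1/2}(\T^2)$, which is not an algebra, so ``the correct Sobolev algebra $H^{s-5/2}$'' does not exist. The same issue recurs at $j=4$ with $\TP\p_t^4 v$. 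What is needed here is not a positive-order fractional Leibniz rule, but the multiplier estimate $H^{\sigma}(\T^2)\cdot\dot H^{-1/2}(\T^2)\subset\dot H^{-1/2}(\T^2)$ for $\sigma>1$, which the paper proves by $\dot H^{-1/2}$--$\dot H^{1/2}$ duality together with a half-derivative integration by parts,
\[
\bigl\langle \TL\eta_\beta\ak^{i\beta}\,\TP_i\lkk^2\p_t^3 v,\ \phi\bigr\rangle
=\bigl\langle \TP^{1/2}\p_t^3\lkk^2 v,\ \TP^{1/2}\bigl(\TL\eta_\beta\ak^{i\beta}\phi\bigr)\bigr\rangle ,
\]
so that the problematic factor is replaced by $\p_t^3 v|_\Gamma\in\dot H^{1/2}(\Gamma)$, which \emph{is} the trace of the available $H^1(\Omega)$ regularity, and then Kato--Ponce in positive-order spaces closes the estimate. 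Your bookkeeping provides no substitute for this step, and without it the bounds \eqref{psittt2}, \eqref{psitttt1}, \eqref{etatttt1}, \eqref{etattttt0} do not follow.
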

\begin{proof}
First, \eqref{eta4} follow from \eqref{lkk11}, i.e., $\|\ek\|_{4}=\|\lkk^2\p \eta\|_{4}\lesssim\| \eta\|_{4}$. To bound $\p_t^k\ek$, it suffices to bound the same norm of $\p_t^k\eta$ and then apply \eqref{lkk11} again. From the first equation of \eqref{nonlinearkk}, one has $\p_t^{k+1}\eta=\p_t^k v+\p_t^k \psi$, so the estimates \eqref{etat4}-\eqref{etatttt1} directly follow from \eqref{psi4}-\eqref{psittt2}.

Commuting time derivatives through \eqref{psi}, we get the equations for $\p_t^k\psi~(k=0,1,2,3,4)$:
\begin{equation}\label{psitk}
\begin{cases}
\Delta \p_t^k\psi=0  &~~~\text{in }\Omega, \\
\p_t^k\psi=\TL^{-1}\mathbb{P}_{\neq 0}\p_t^k\left(\TL\eta_{\beta}\ak^{i\beta}\TP_i\lkk^2 v-\TL\lkk^2\eta_{\beta}\ak^{i\beta}\TP_i v\right) &~~~\text{on }\Gamma.
\end{cases}
\end{equation}
By the standard elliptic estimates and Sobolev trace lemma, we can get 
\begin{equation}\label{psi040}
\begin{aligned}
\|\psi\|_{4}\lesssim&\left|\TL^{-1}\mathbb{P}_{\neq 0}\left(\TL\eta_{\beta}\ak^{i\beta}\TP_i\lkk^2 v-\TL\lkk^2\eta_{\beta}\ak^{i\beta}\TP_i v\right)\right|_{3.5} \\
\lesssim& \left\|\TL\eta_{\beta}\ak^{i\beta}\TP_i\lkk^2 v-\TL\lkk^2\eta_{\beta}\ak^{i\beta}\TP_i v\right\|_{2} \\
\lesssim& P(\|\eta\|_4,\|v\|_3).
\end{aligned}
\end{equation}
Similarly one has
\begin{align}
\label{psit40} \|\p_t\psi\|_{4}\lesssim& P(\|\eta\|_4,\|v\|_4, \|\p_t v\|_3),\\
\label{psitt30} \|\p_t^2\psi\|_{3}\lesssim&P(\|\eta\|_4,\|v\|_4,\|\p_t v\|_3,\|\p_t^2v\|_2).
\end{align}

When $k=3$, it just needs more rigorous discussion.
\begin{equation}\label{psittt20} 
\begin{aligned}
\|\p_t^3\psi\|_{2}&\lesssim\left|\TL^{-1}\p_t^3\mathbb{P}\left(\TL\eta_{\beta}\ak^{i\beta}\TP_i\lkk^2 v-\TL\lkk^2\eta_{\beta}\ak^{i\beta}\TP_i v\right)\right|_{1.5} \\
&\lesssim \left|~\mathbb{P}\p_t^3\left(\TL\eta_{\beta}\ak^{i\beta}\TP_i\lkk^2 v-\TL\lkk^2\eta_{\beta}\ak^{i\beta}\TP_i v\right)\right|_{-0.5}\\
\end{aligned}
\end{equation}
where in the last step we apply the Bernstein's inequality.

Combining with $\p_t^{k+1}\eta=\p_t^k v+\p_t^k \psi$, \eqref{etat4}, \eqref{etatt3} and \eqref{etattt2} directly follows from \eqref{psit40} and \eqref{psitt30}, respectively. When $k=3$, one has to be cautious because the leading order term in \eqref{psittt20} is of the form $(\p_t^3\TL\eta)\ak\TP v$ and $\TL\eta\ak(\p_t^3\TP v)$ which can only be bounded in $L^2(\Omega)$ by the quantites in $\EE_{\kk}$ and thus loses control on the boundary. To control these terms on the boundary, we have to use the fact that $\dot{H}^{0.5}(\T^2)=(\dot{H}^{0.5}(\T^2))^*$.

First we separate them from other lower order terms which has $L^2(\Gamma)$ control.
\begin{equation}\label{XY}
\begin{aligned}
&~~~~\mathbb{P}\p_t^3\left(\TL\eta_{\beta}\ak^{i\beta}\TP_i\lkk^2 v-\TL\lkk^2\eta_{\beta}\ak^{i\beta}\TP_i v\right) \\
&=\mathbb{P}\underbrace{\left(\p_t^3 \TL\eta_{\beta}\ak^{i\beta}\TP_i\lkk^2 v-\p_t^3\TL\lkk^2\eta_{\beta}\ak^{i\beta}\TP_i v+\TL\eta_{\beta}\ak^{i\beta}\p_t^3\TP_i\lkk^2 v-\TL\lkk^2\eta_{\beta}\ak^{i\beta}\p_t^3\TP_i v\right)}_{\text{leading order terms=:X}}+\mathbb{P}Y.
\end{aligned}
\end{equation}
The control of $Y$ is straightforward by using Sobolev trace lemma and \eqref{etat4}, \eqref{etatt3},
\begin{equation}\label{Y}
\begin{aligned}
|\mathbb{P}Y|_{-0.5}&\leq|\mathbb{P}Y|_{0}\lesssim\|Y\|_{0.5} \\
&\lesssim P(\|\p_t^2\eta\|_{2.5}, \|\p_t \eta\|_{3.5}, \|\p_t^2 \ak\|_{1.5},\|\p_t^2 v\|_{1.5}, \|\p_t v\|_{2.5}) \\
&\lesssim P(\|\eta\|_4, \|v\|_4,\|\p_t v\|_3,\|\p_t^2 v\|_2).
\end{aligned}
\end{equation}
As for the $|\mathbb{P} X|_{-0.5}$ term, we first use the Bernstein inequality to get $|\mathbb{P} X|_{-0.5}\approx |X|_{\dot{H}^{-0.5}}$. Then the duality between $\dot{H}^{-0.5}$ and $\dot{H}^{0.5}$ yields that for any test function $\phi\in \dot{H}^{0.5}(\T^2)$ with $|\phi|_{\dot{H}^{0.5}}\leq 1$, one has
\begin{equation}\label{X1}
\begin{aligned}
\langle \TL\eta_{\beta}\ak^{i\beta}\p_t^3\TP_i\lkk^2 v, \phi\rangle &=\langle \p_t^3\TP_i\lkk^2 v, \TL\eta_{\beta}\ak^{i\beta}\phi\rangle=\langle \TP_i^{0.5}\p_t^3\lkk^2 v,\TP_i^{0.5}(\TL\eta_{\beta}\ak^{i\beta}\phi)\rangle \\
&\lesssim|\p_t^3\lkk^2 v|_{\dot{H}^{0.5}}|\TL\eta\ak\phi|_{\dot{H}^{0.5}} \\
&\lesssim\|\p_t^3 v\|_{1}(|\phi|_{\dot{H}^{0.5}}|\TL\eta\ak|_{L^{\infty}}+|\TL\eta\ak|_{\dot{W}^{0.5,4}}|\phi|_{L^4})\\
&\lesssim\|\p_t^3 v\|_1(\|\eta\|_4\|a\|_{L^{\infty}})|\phi|_{\dot{H}^{0.5}}.
\end{aligned}
\end{equation} Here we integrate 1/2-order tangential derivative on $\Gamma$ by part in the second step, and then apply trace lemma to control $|\p_t^3 \lkk^2 v|_{\dot{H}^{0.5}}$ and Kato-Ponce product estimate to bound $|\TL\eta\ak\phi|_{\dot{H}^{0.5}}$. Taking supremum over all $\phi\in H^{0.5}(\R^2)$ with $|\phi|_{\dot{H}^{0.5}}\leq 1$, we have by the definition of $\dot{H}^{0.5}$-norm that 
\begin{equation}\label{X0}
|\TL\eta_{\beta}\ak^{i\beta}\p_t^3\TP_i\lkk^2 v|_{\dot{H}^{-0.5}}\lesssim P(\|\eta\|_4,\|\p_t^3 v\|_1).
\end{equation}
Similarly as above, we have 
\begin{align}
\label{X2} |\TL\lkk^2\eta_{\beta}\ak^{i\beta}\p_t^3\TP_i v|_{\dot{H}^{-0.5}}&\lesssim P(\|\eta\|_4,\|\p_t^3 v\|_1), \\
\label{X3} |\p_t^3 \TL\eta_{\beta}\ak^{i\beta}\TP_i\lkk^2 v-\p_t^3\TL\lkk^2\eta_{\beta}\ak^{i\beta}\TP_i v|_{\dot{H}^{-0.5}}&\lesssim P(\|\p_t^3\eta\|_2, \|\eta\|_4, \|v\|_3).
\end{align}
Combining \eqref{psittt20}-\eqref{X3} and the bound \eqref{etattt2} for $\p_t^3\eta$, we get 
\[
\|\p_t^3\psi\|_2\lesssim P(\|\eta\|_4,\|v\|_4, \|\p_tv\|_3, \|\p_t^2 v\|_2, \|\p_t^3v\|_{1}),
\]which is exactly \eqref{psittt2}. Hence, \eqref{etatttt1} directly follows from \eqref{psittt2} and $\p_t^4\eta=\p_t^3(v+\psi)$.

Similar estimates hold for $\p_t^4\psi$. First we have

\begin{equation}\label{psir4}
\begin{cases}
\Delta \p_t^4\psi=0  &~~~\text{in }\Omega, \\
\p_t^4\psi=\TL^{-1}\mathbb{P}\p_t^4\left(\TL\eta_{\beta}\ak^{i\beta}\TP_i\lkk^2 v-\TL\lkk^2\eta_{\beta}\ak^{i\beta}\TP_i v\right) &~~~\text{on }\Gamma.
\end{cases}
\end{equation}
Using Lemma \ref{harmonictrace} for harmonic functions, we know
\begin{align*}
\|\p_t^4\psi\|_1\lesssim&|\p_t^4\psi|_{0.5}=\left|\TL^{-1}\mathbb{P}_{\neq 0}\p_t^4\left(\TL\eta_{\beta}\ak^{i\beta}\TP_i\lkk^2 v-\TL\lkk^2\eta_{\beta}\ak^{i\beta}\TP_i v\right)\right|_{0.5} \\
\lesssim&|\mathbb{P}_{\neq 0}\p_t^4\left(\TL\eta_{\beta}\ak^{i\beta}\TP_i\lkk^2v-\TL\lkk^2\eta_{\beta}\ak^{i\beta}\TP_i v\right)|_{\dot{H}^{-1.5}}\\
\lesssim&|\mathbb{P}_{\neq 0}\p_t^4\left(\TL\eta_{\beta}\ak^{i\beta}\TP_i\lkk^2 v-\TL\lkk^2\eta_{\beta}\ak^{i\beta}\TP_i v\right)|_{\dot{H}^{-0.5}}.
\end{align*}

The most difficult terms appear when $\p_t^4$ falls on $\TL\eta$ or $\TP v$. We only show how to control $\TL\ek_{\beta}\ak^{i\beta}\TP_i\p_t^4 v$ and the rest follows in the same way. Given any test function $\phi\in \dot{H}^{0.5}(\T^2)$ with $|\phi|_{\dot{H}^{0.5}}\leq 1$, we consider
\begin{align*} 
\left|\langle \TL\lkk^2\eta_{\beta}\ak^{i\beta}\TP_i\p_t^4 v, \phi\rangle\right|=&\left|\langle \TP_i\p_t^4 v,\TL\lkk^2\eta_{\beta}\ak^{i\beta}\phi\rangle\right|=\left|\langle \TP^{0.5}\p_t^4 v, \TP^{0.5}(\TL\lkk^2\eta_{\beta}\ak^{i\beta}\phi)\rangle\right| \\
\lesssim&\left|\p_t^4 v\right|_{\dot{H}^{0.5}} \left|\TL\lkk^2\eta_{\beta}\ak^{i\beta}\phi\right|_{\dot{H}^{0.5}} \\
\lesssim&  \left(\|\p_t^4 v\|_1\|\eta\|_4^2\right)\left|\phi\right|_{\dot{H}^{0.5}},
\end{align*}

Taking supremum over all  $\phi\in \dot{H}^{0.5}(\T^2)$ with $|\phi|_{\dot{H}^{0.5}}\leq 1$, we obtain 
\[
\|\TL\lkk^2\eta_{\beta}\ak^{i\beta}\TP_i\p_t^4 v\|_{\dot{H}^{-0.5}}\lesssim  \|\p_t^4 v\|_1\|\eta\|_4^2,
\] and thus gives the bound for $\|\p_t^4\psi\|_1$. The estimates of $\p_t^5\eta,\p_t^5 J$ follows directly from $\p_t^5\eta=\p_t^4 v+\p_t^4\psi$ and $J=\det[\p\eta].$
\end{proof}

\subsection{Estimates of the magnetic field}\label{nonb}

For ideal MHD,  the magnetic field can be written as $b=J^{-1}(b_0\cdot\p)\eta$, which should be controlled together with the same derivatives of $v=\p_t\eta$, and higher order terms are expected to vanish due to subtle cancellation.  But for resistive MHD, the magnetic diffusion, together with the boundary condition $b=\mathbf{0}$, allows us to control the higher order term $\lapak b$ directly, and $(b\cdot\pak)b$ (Lorentz force) with the help of Christodoulou-Lindblad type elliptic estimates Lemma \ref{GLL}.

\subsubsection{Control of $\p_t^k b$ when $k\leq 2$}\label{nonb1}

First we estimate $\|\p_t^{4-k}b\|_k$. Notice that, when $k\geq 1$, we have $$\p^{k-1}\p_{\alpha}\p_t^{4-k} b=\p^{k-1}(\ak^{\mu}_{\alpha}\p_{\mu}\p_t^{4-k}b)+\p^{k-1}((\delta^{\mu}_{\alpha}-\ak^{\mu}_{\alpha})\p_\mu\p_t^{4-k}b),$$ which gives
$$\|\p_t^{4-k} b\|_k^2\lesssim\|\pak \p_t^{4-k} b\|_{k-1}^2+\|\text{Id}-\ak\|_{k-1}^2\|\p_t^{4-k}b\|_{k}^2\leq \|\pak \p_t^{4-k} b\|_{k-1}^2+\epsilon^2\|\p_t^{4-k}b\|_{k}^2.$$  Here the $\epsilon$-term can be absorbed into the LHS.  When $k=1,2$, $\|\text{Id}-\ak\|_{k-1}$ should be replaced by $\|\text{Id}-\ak\|_{L^{\infty}}$. Therefore, we have that for $1\leq k\leq 4$, $\|\p_t^{4-k}b\|_k\lesssim \|\pak(\p_t^{4-k} b)\|_{k-1}$, which motivates us to use Christodoulou-Lindblad elliptic estimates Lemma \ref{GLL}.

 Applying Lemma \ref{GLL} to $b$, we have
\begin{equation}\label{be}
\|b\|_4\approx\|\pak b\|_3\lesssim P(\|\ek\|_3)(\|\lapak b\|_2+\|\TP\ek\|_3\|b\|_3)
\end{equation}

Invoking the heat equation $\lapak b=\p_t b-(b\cdot\pak)v+b\diva v$, we have
\begin{equation}\label{b04}
\begin{aligned}
\|b\|_4\lesssim&P(\|\ek\|_3)\left(\|\p_t b\|_2+\|(b\cdot\pak)v\|_2+\|b\diva v\|_2+\|\TP\ek\|_3\|b\|_3\right)\\
\lesssim& P(\|\ek\|_3)P\left(\|\p_t b\|_2,\|b\|_2,\|u\|_3\right) +P(\|\ek\|_3)\|\TP\ek\|_3\|b\|_3\\
\end{aligned}
\end{equation}

The first term $P(\|\ek\|_3)P\left(\|\p_t b\|_2,\|b\|_2,\|u\|_3\right)$ can be directly controlled by $\PP_0+\int_0^T P(E_{\kk}(t))\dt$. For the second term, we notice that $\|\TP\ek\|_3\lesssim\|\TP\ek\|_0+\|\p^3\TP\ek\|_0$, and $\TP_i\eta_{\alpha}|_{t=0}=\delta_{i\alpha}$, $\p^3\TP\eta|_{t=0}=0$, so $$\|\TP\ek\|_3\lesssim 1+\int_0^T \|\p_t\TP\ek\|_3\dt.$$ Plugging this into the second term $\left(\|\TP\eta\|_0+\|\TP\p\ek\|_2\right)P(\|\ek\|_3)\|b\|_3$, we know 
\begin{align*}
&\left(\|\TP\eta\|_0+\|\TP\p\ek\|_2\right)P(\|\ek\|_3)\|b\|_3\\
\lesssim&P(\|\ek\|_3)\|b\|_3\int_0^T\|\p_t\TP\ek\|_3\dt +P(\|\ek\|_3)\left(\|b_0\|_3+\int_0^T\|\p_t b\|_3\dt\right)\\
\lesssim&\PP_0+P(E_\kk(t))\int_0^T P(E_\kk(t))\dt.
\end{align*}
Therefore, \eqref{b04} becomes
\begin{equation}\label{b4}
\|b\|_4\lesssim \PP_0+P(E_\kk(t))\int_0^TP(E_\kk(t))\dt
\end{equation}

Since $\p_t$ is tangential on $\Gamma$, $\p_t b$ also vanishes on the boundary. Applying elliptic estimates as in \eqref{be}, we get
\begin{align}
\label{bte} \|\p_t b\|_3&\approx\|\pak \p_t b\|_2\lesssim P(\|\ek\|_2)\left(\|\lapak \p_t b\|_1+\|\TP\ek\|_2\|\p_t b\|_2\right)\\
\label{btte} \|\p_t^2 b\|_2&\approx\|\pak \p_t^2 b\|_1\lesssim P(\|\ek\|_2)\left(\|\lapak \p_t^2 b\|_0+\|\TP\ek\|_2\|\p_t^2 b\|_1\right).
\end{align}

Taking time derivatives in the heat equation of $b$, we have $$\lapak \p_t^k b=\p_t^{k+1}b-\p_t^{k}\left((b\cdot\pak)v-b\diva v\right),$$ of which the RHS is of one less derivative than LHS. Therefore, we are able to control $\|\p_t b\|_3,\|\p_t^2b\|_2$ in the same way as \eqref{b4}:
\begin{equation}\label{bt3btt2}
\|\p_t b\|_3+\|\p_t^2 b\|_2\lesssim \PP_0+P(E_\kk(t))\int_0^TP(E_\kk(t))\dt
\end{equation}

\subsubsection{Control of $\p_t^3 b$: Heat equation}\label{nonb2}

Note that $\|\p_t^3 b\|_1\approx\|\pak\p_t^3 b\|_0$ is a part of the energy of 3-rd order time-differentiated heat equation $$\p_t^4 b-\lapak \p_t^3 b=\p_t^3\left((b\cdot\pak) v)-b\diva v\right)+[\p_t^3,\lapak]b,$$ of which the RHS only contain terms with $\leq 4 $ derivatives.

Taking $L^2$ inner product with $\Jk\p_t^4 b$, integrating in $y\in\Omega$ and $t\in[0,T]$, and then integrating by parts, one has
\begin{align*}
LHS=&\int_0^T\io \Jk\left|\p_t^4 b\right|^2\dy\dt-\int_0^T\io\p_t^4 b\cdot\Jk\lapak\p_t^3 b\dy\dt\\
=&\int_0^T\io \Jk\left|\p_t^4 b\right|^2\dy\dt+\int_0^T\io\Jk\pak\p_t^4 b\cdot\pak\p_t^3b\dy\dt\\
=&\int_0^T\io \Jk\left|\p_t^4 b\right|^2\dy\dt+\frac{1}{2}\io\Jk\left|\pak\p_t^3 b\right|^2\dy\bigg|^T_0\\
&-\frac{1}{2}\int_0^T\io\p_t \Jk\left|\pak\p_t^3 b\right|^2\dy\dt+\int_0^T\io\Jk\left[\pak,\p_t\right]\p_t^3 b\cdot\pak\p_t^3 b\dy\dt\\
RHS=&\int_0^T\io \p_t^4 b\cdot\left(\p_t^3\left((b\cdot\pak) v)-b\diva v\right)+[\p_t^3,\lapak]b\right)\dy\dt
\end{align*}

Therefore, one has 
\begin{equation}\label{bttt1}
\begin{aligned}
&\int_0^T\io \Jk\left|\p_t^4 b\right|^2\dy\dt+\frac{1}{2}\io\Jk\left|\pak\p_t^3 b(T)\right|^2\dy\\
=&\frac{1}{2}\io\Jk\left|\pak\p_t^3 b(0)\right|^2\dy+\frac{1}{2}\int_0^T\io\p_t \Jk\left|\pak\p_t^3 b\right|^2\dy\dt-\int_0^T\io\Jk\left[\pak,\p_t\right]\p_t^3 b\cdot\pak\p_t^3 b\dy\dt\\
&+\int_0^T\io \p_t^4 b\cdot\left(\p_t^3\left((b\cdot\pak) v)-b\diva v\right)+[\p_t^3,\lapak]b\right)\dy\dt\\
\lesssim&\PP_0+\int_0^T P(E_{\kk}(t))\dt,
\end{aligned}
\end{equation}which gives the $H^1$ control of $\p_t^3 b$.

\subsubsection{Control of $\p_t^4 b$: Higher order estimates needed}\label{nonb3}

There are two ways to control $\|\p_t^4 b\|_0$. One way is to use Poincar\'e's inequality 
\begin{equation}\label{b401}
\|\p_t^4 b\|_0\lesssim \|\p_t^4 b\|_1\approx\|\pak\p_t^4 b\|_0
\end{equation} due to $\p_t^4 b|_{\Gamma}=0$
 Another way is direct computation
\begin{equation}\label{b402}
\begin{aligned}
\frac{1}{2}\|\p_t^4 b\|_0^2=&\frac{1}{2}\|\p_t^4 b(0)\|_0^2+\int_0^T\p_t^4 b\cdot\p_t^5 b\dt\\
\lesssim&\PP_0+\left\|\p_t^5 b\right\|_{L_t^2L_x^2([0,T]\times\Omega)}\left\|\p_t^4 b\right\|_{L_t^2L_x^2([0,T]\times\Omega)}\\
\lesssim&\PP_0+\epsilon\int_0^T\io\left|\p_t^5b\right|^2\dy\dt+\frac{1}{4\epsilon}\int_0^T \io\left|\p_t^4 b\right|^2\dy\dt\\
\lesssim&\epsilon\int_0^T\io\left|\p_t^5b\right|^2\dy\dt+\PP_0+\int_0^TP(E_\kk(t))\dt.
\end{aligned}
\end{equation}
From \eqref{b401} and \eqref{b402}, we find that either $\|\pak \p_t^4 b\|_0^2$ or $\|\p_t^5 b\|_{L_t^2L_x^2}$ is required to control $\|\p_t^4 b\|_0^2$. On the other hand, we notice that these two terms exactly come from the energy functional of 4-th time-differentiated heat equation of $b$:
\[
\p_t^5 b-\lapak\p_t^4b=\left[\p_t^4,\lapak\right]b+\p_t^4\left((b\cdot\pak)v-b\diva v\right).
\] The energy estimate cannot be controlled in the same way as in Section \ref{nonb2} because the RHS of this heat equation contains 5-th order derivatives. Instead, we will seek for a common control of $b$ and $p$ via the heat equation and wave equation. This part will be postponed to Section \ref{noncommon}.

\subsubsection{Estimates of Lorentz force}\label{nonbb}

Later on we will see both the estimates of $u$ and common control of higher order heat and wave equations require the control of 5-th derivatives of magnetic field, all of which are actually 4-th space-time derivatives of Lorentz force $(b\cdot\pak)b$. Notice that $b=0$ on the boundary implies $(b\cdot\pak)b$ also vanishes on $\Gamma$. Therefore, we can apply the elliptic estimate Lemma \ref{GLL} to $(b\cdot\pak)b$.

We start with $\|(b\cdot\pak)b\|_4$. Similarly as in \eqref{be}, we have
\begin{equation}\label{bbe}
\|(b\cdot\pak)b\|_{4}\approx\|\pak((b\cdot\pak)b)\|_3\lesssim P(\|\ek\|_3)\left(\|\lapak ((b\cdot\pak)b)\|_2+\|\TP\ek\|_3\|(b\cdot\pak)b\|_3\right)
\end{equation}

The second term $P(\|\ek\|_3)\|\TP\ek\|_3\|(b\cdot\pak)b\|_3$ can again be controlled by $\PP_0+P(E_\kk(T))\int_0^TP(E_\kk(t))\dt$ by writting $\|\TP\ek\|_3\lesssim1+\int_0^T\|\p_t\TP\ek\|_3$ as in \eqref{b4}. For the first term, we invoke the heat equation of $b$ to get
\begin{align*}
\lapak\left((b\cdot\pak)b\right)=&(b\cdot\pak)\left(\lapak b\right)+[\lapak,b\cdot\pak] b\\
=&(b\cdot\pak)\left(\p_t b-(b\cdot\pak)v+b\diva v\right)+[\lapak,b\cdot\pak] b,
\end{align*} of which the RHS only contains terms with $\leq 2$ derivatives. So we have
\[
\|\lapak\left((b\cdot\pak)b\right)\|_2\lesssim P(E_\kk(T)),
\]and thus
\begin{equation}\label{bb4}
\|(b\cdot\pak)b\|_{4}\lesssim P(E_\kk(T))+\PP_0+P(E_\kk(T))\int_0^TP(E_\kk(t))\dt.
\end{equation}

When $k=1,2$, $\|\p_t^k((b\cdot\pak)b)\|_{4-k}$ can be controlled in the same way as \eqref{bte}, \eqref{btte} and \eqref{bt3btt2}:
\begin{equation}\label{bbtbbtt}
\begin{aligned}
&\|\p_t((b\cdot\pak)b)\|_{3}+\|\p_t^2((b\cdot\pak)b)\|_{2}\\
\lesssim& P(\|\ek\|_2)\left(\|\lapak\p_t((b\cdot\pak)b)\|_1+\|\lapak\p_t^2((b\cdot\pak)b)\|_0\right)\\
&+P(\|\ek\|_2)\|\TP\ek\|_2\left(\|\p_t((b\cdot\pak)b)\|_{2}+\|\p_t^2((b\cdot\pak)b)\|_{1}\right)\\
\lesssim&P(\|\ek\|_2)\left(\|\p_t(b\cdot\pak)(\lapak b)\|_1+\|[\lapak,\p_t(b\cdot\pak)]b\|_1+\|\p_t^2(b\cdot\pak)\lapak b\|_0+\|[\lapak,\p_t^2(b\cdot\pak)]b\|_0\right)\\
&+\PP_0+P(E_{\kk}(T))\int_0^T P(E_{\kk}(t))\dt \\
\lesssim& P(E_\kk(T))+\PP_0+P(E_{\kk}(T))\int_0^T P(E_{\kk}(t))\dt.
\end{aligned}
\end{equation}

When $k=3$, we have $\p(\p_t^3(b\cdot\pak)b)=(b\cdot\pak)\p\p_t^3 b+[\p\p_t^3,b\cdot\pak]b$, where the commutator only contains the terms of $\leq 4$-th order derivative, so 
\begin{equation}\label{bbttt1}
\begin{aligned}
\|\p_t^3(b\cdot\pak)b\|_1\lesssim&\|(b\cdot\pak)\p\p_t^3 b\|_0+\|[\p\p_t^3,b\cdot\pak]b\|_0\\
\lesssim&\|b\|_2\|\pak\p_t^3 b\|_1+P(E_\kk(T))
\end{aligned}
\end{equation} Then by elliptic estimates Lemma \ref{GLL} and the heat equation, 
\begin{align*}
\|\pak\p_t^3 b\|_1\lesssim& P(\|\ek\|_2)(\|\lapak \p_t^3 b\|_0+\|\TP\ek\|_2\|\p_t^3 b\|_1)\\
\lesssim& P(\|\ek\|_2)\left(\|\p_t^4 b\|_0+\|\p_t^3((b\cdot\pak)v-b\diva v)\|_0+\|[\lapak,\p_t^3]b\|_0\right)+ P(E_\kk(T))\\
\lesssim&P(E_\kk(T))
\end{align*}

Similarly, for $k=4$, we have $\p_t^4((b\cdot\pak)b)=(b\cdot\pak)\p_t^4 b+[\p_t^4,b\cdot\pak]b$, where the commutator only contains the terms of $\leq 4$-th order derivative, so 
\begin{equation}\label{bbtttt1}
\begin{aligned}
\|\p_t^4(b\cdot\pak)b\|_0\lesssim&\|(b\cdot\pak)\p_t^4 b\|_0+\|[\p_t^4,b\cdot\pak]b\|_0\\
\lesssim&\|b\|_2\|\pak\p_t^4 b\|_0+P(E_\kk(T)),
\end{aligned}
\end{equation}where the term $\|\pak \p_t^4 b\|_0^2$ is exactly part of the energy functional $H_\kk(T)$ of 4-th time-differentiated heat equation. 

Summing up \eqref{bb4}, \eqref{bbtbbtt}, \eqref{bbttt1} and \eqref{bbtttt1}, we get the estimates of Lorentz force
\begin{equation}\label{bb1}
\sum_{k=0}^4\left\|\p_t^{4-k}((b\cdot\pak)b)\right\|_k^2\lesssim \left\|b\right\|_2^2\left\|\pak \p_t^4 b\right\|_0^2+ P(E_{\kk}(T))+\PP_0+P(E_{\kk}(T))\int_0^TP(E_{\kk}(T))\dt
\end{equation}

Therefore, we find that the estimates of Lorentz force are again reduced to the control of higher order heat equation.

\subsection{Estimates of the velocity and the pressure}\label{nonvp}

In this part we control the space-time Sobolev norm of $v$ and $q$. We first apply the Hodge-type div-curl decomposition (Lemma \ref{hodge}) to $v$ (and its time derivatives). The curl part can be directly controlled by the counterpart of Lorentz force. The boundary term can be reduced to interior tangential estimates by using Sobolev trace Lemma. The divergence part together with the estimates of $q$ can be reduced to the control of full time derivatives, which is also part of tangential estimates. One should keep in mind that, we no longer seek for subtle cancellation to eliminate higher order terms as what was done for ideal MHD, no matter in curl or tangential estimates. Instead, those higher order terms (with 5 derivatives) can be controlled either by Lorentz force, or by the combination of heat equation and wave equation, i.e., $H_\kk(T)$ and $W_\kk(T)$. 

First we recall Lemma \ref{hodge}: $\forall s\geq 1$ and any sufficiently regular vector field $X$, we have
\[
\|X\|_s\lesssim\|X\|_0+\|\dive X\|_{s-1}+\|\curl X\|_{s-1}+|X\cdot N|_{s-1/2}.
\]

Let $X=v,\p_t v,\p_t^2 v,\p_t^3 v$ and $s=4,3,2,1$ respectively. We have
\begin{equation}\label{vhodge}
\begin{aligned}
\|v\|_4&\lesssim\|v\|_0+\|\dive v\|_3+\|\curl v\|_3+|v\cdot N|_{3.5} \\
\|\p_t v\|_3&\lesssim\|\p_t v\|_0+\|\dive \p_t v\|_2+\|\curl \p_t v\|_2+|\p_t v\cdot N|_{2.5} \\
\|\p_t^2 v\|_2&\lesssim\|\p_t^2 v\|_0+\|\dive \p_t^2 v\|_1+\|\curl\p_t^2 v\|_1+|\p_t^2 v\cdot N|_{1.5} \\
\|\p_t^3 v\|_1&\lesssim\|\p_t^3 v\|_0+\|\dive \p_t^3 v\|_0+\|\curl\p_t^3 v\|_0+|\p_t^3 v\cdot N|_{0.5}.
\end{aligned}
\end{equation}
First, the $L^2$-norms are of lower order. The $L^2$-norm of $v$ has been controlled in the energy dissapation. While for $\|\p_tv\|_0,\|\p_t^2v\|_0$ and $\|\p_t^3 v\|_0$, we commute $\p_t$ through $\rho_0\Jk^{-1}\p_t v=(b\cdot\pak)b-\pak Q$ and obtain
\begin{equation}\label{vL2}
\|\p_t v(T)\|_0+\|\p_t^2 v(T)\|_0+\|\p_t^3 v(T)\|_0\lesssim\PP_0+\int_0^T P(E_\kk(t))\dt
\end{equation}

\subsubsection{Boundary estimates: Reduced to tangential estimates}\label{bdryv}

The boundary part of div-curl decomposition can be reduced to the interior tangential estimates by invoking the normal trace Lemma \ref{normaltrace}
\begin{equation}\label{vbdry}
|\TP v^3|_{2.5}\lesssim\|\TP^4 v\|_0+\|\dive v\|_3.
\end{equation}

Similarly we have for $1\leq k\leq 3$
\begin{equation}\label{vtbdry}
|\p_t^k v^3|_{3.5-k}\lesssim\|\TP^{4-k}\p_t^k v\|_0+\|\dive \p_t^k v\|_{3-k}
\end{equation}

\subsubsection{Curl control: Reduced to Lorentz force}\label{curlv}

By the a priori assumption \eqref{small1}, we can estimate the Lagrangian vorticity via Eulerian vorticity plus a small error, for $1\leq k\leq 4$
\begin{equation}\label{curlvv}
\|\curl \p_t^{4-k} v\|_{k-1}^2\lesssim\|\curla \p_t^{4-k} v\|_{k-1}^2+\epsilon^2\|\p_t^{4-k}v\|_k^2
\end{equation}

Taking $\curla$ in $\rho_0\Jk^{-1}\p_t v=(b\cdot\pak)b-\pak Q$, we have 
\begin{equation}\label{curleq}
\rho_0\Jk^{-1}\p_t\curla v=\curla\left((b\cdot\pak)b\right)+\left[\rho_0\Jk^{-1}\p_t,\curla\right]v,
\end{equation} where the commutator only contains first order derivative of $v,\rho,\p_t\eta$.

Taking derivative $\p_t^{4-k}\p^{k-1}$ in \eqref{curleq}, we get the differentiated equation of $\curla v$:
\begin{equation}\label{curleq2}
\begin{aligned}
\rho_0\Jk^{-1}\p_t&(\p^{k-1}\curla \p_t^{4-k} v)=\p_t^{4-k}\p^{k-1}\curl((b\cdot\pak)b)\\
&\underbrace{+\p_t^{4-k}\p^{k-1}\left(\left[\rho_0\Jk^{-1}\p_t,\curla\right]v\right)+[\p_t^{4-k}\p^{k-1},\rho_0\Jk^{-1}\p_t]\curla v+\rho_0\Jk^{-1}\p_t\p^{k-1}([\curla,\p_t^{4-k}]v)}_{F_k}
\end{aligned}
\end{equation}
then taking $L^2$-inner product with $\p^{k-1}\curla\p_t^{4-k} v$, we have
\begin{equation}
\begin{aligned}
&\frac{1}{2}\io\rho_0\Jk^{-1}\left|\p^{k-1}\curla\p_t^{4-k} v(T)\right|^2\dy-\frac{1}{2}\io\rho_0\Jk^{-1}\left|\p^{k-1}\curla\p_t^{4-k} v(0)\right|^2\dy\\
=&\frac{1}{2}\int_0^T\io\p_t\left(\rho_0\Jk^{-1}\right)\left|\p^{k-1}\curla\p_t^{4-k} v\right|^2\dy\dt\\
&+\int_0^T\io\rho_0\Jk^{-1}\p^{k-1}\curla\p_t^{4-k} v\cdot\p^{k-1}\curla\p_t^{4-k}((b\cdot\pak)b)\dy\dt\\
&+\int_0^T\io\rho_0\Jk^{-1}\p^{k-1}\curla\p_t^{4-k} v\cdot F_k\dy\dt\\
\lesssim&\int_0^T\left\|\left(\rho_0\Jk^{-1}\right)\right\|_{L^{\infty}}^2\left\|\p_t^{4-k}v\right\|_k^2\dt+\int_0^T\left\|\p_t^{4-k}v\right\|_k\left\|\p_t^{4-k}\left((b\cdot\pak)b\right)\right\|_k\dt+\int_0^T\left\|\p_t^{4-k}v\right\|_k\left\|F_k\right\|_{L^2}\dt\\
\lesssim&\epsilon T\sup_{0\leq t\leq T}\left\|\p_t^{4-k}\left((b\cdot\pak)b\right)\right\|_k^2+\int_0^T P(E_\kk(t))\dt.
\end{aligned}
\end{equation}Here we used the fact that all terms in $F_k$ are of $\leq 4$ derivatives, and thus can be controlled by $P(E_\kk(t))$.

\subsubsection{Divergence Control: Reduction to full time derivatives}\label{vpreduce}

Before going into detailed proof, we briefly describe the procedure of such reduction. The second and third equations of \eqref{nonlinearkk} give us the following relations if we omit the coefficients $\rho_0\Jk^{-1}$ and $\frac{\Jk R'(q)}{\rho_0}$:
\[
\p\p_t^{k} q\approx \pak \p_t^k q\approx-\p_t^{k+1}v+\p_t^k\left((b\cdot\pak)b-\frac{1}{2}|b|^2\right),
\]
and
\[
\p\p_t^{k}v\xrightarrow{\dive}\p_t^k\diva v+\text{curl}+\text{boundary}\sim\p_t^{k+1}q+\text{curl}+\text{boundary}.
\]
Since the terms containing magnetic field $b$ can be reduced to lower order with the help of magnetic diffusion, the procedure above allows us to control $\dive v$ by $\p_t q$, and control $\p q$ by $\p_t v$. In other words, we are able to trade one spatial derivative by one time derivative, and finally reduce the control to the full time derivative estimates. See the following 
\begin{equation}\label{reducevq}
\begin{aligned}
\p^4 q\xrightarrow{\eqref{nonlinearkk}}\p^3\p_t v\xrightarrow{\dive}\p^2\p_t^2q\xrightarrow{\eqref{nonlinearkk}}\p\p_t^3v &\xrightarrow{\dive} \p_t^4 q \\
\p^3\p_t q\xrightarrow{\eqref{nonlinearkk}}\p^2\p_t^2 v\xrightarrow{\dive}\p\p_t^3 q&\xrightarrow{\eqref{nonlinearkk}}\p_t^4 v.
\end{aligned}
\end{equation}

\bigskip

\noindent\textbf{Step 1: Reduce $q$ to $\p_t v$}

First we investigate $\|\p_t^3 q\|_1$. We take $\p_t^3$ in the second equation in \eqref{nonlinearkk} to get
\[
\p\p_t^3 q=\p_t^3(\pak q)+\nabla_{I-\ak}\p_t^3q=-\p_t^3\left(\rho_0\Jk^{-1}\p_t v\right)+\p_t^3\left((b\cdot\pak)b-\frac{1}{2}\pak|b|^2\right)+\nabla_{I-\ak}\p_t^3q,
\]where we have
 Therefore, $\p_t^3 q$ is estimated as
\begin{equation}\label{qttt1}
\begin{aligned}
\left\|\p_t^3 q\right\|_1\lesssim&\epsilon\left\|\p_t^3 q\right\|_1+\left\|\p_t^3(\rho_0\Jk^{-1}\p_t v)\right\|_0+\left\|\p_t^3\left((b\cdot\pak)b-\frac{1}{2}\pak|b|^2\right)\right\|_0 \\
\lesssim&\epsilon\left\|\p_t^3 q\right\|_1+\left\|\rho_0\Jk^{-1}\right\|_{L^{\infty}}\left\|\p_t^4 v\right\|_0+\PP_0+\int_0^T P(E_\kk(t))\dt+L.O.T.,
\end{aligned}
\end{equation}where $\epsilon>0$ can be chosen suitably small in order for being absorbed by LHS. The $\PP_0+\int_0^T P(E_\kk(t))\dt$ comes from the magnetic field according to Section \ref{nonb}.

Similarly as in the derivation of \eqref{qttt1}, we get the following estimates
\begin{align}
\label{qtt1} \|\p_t^2 q\|_{2}&\lesssim\left\|\rho_0\Jk^{-1}\right\|_{L^{\infty}}\|\p_t^3 v\|_1+\PP_0+\int_0^T P(E_\kk(t))\dt+L.O.T. \\
\label{qt1} \|\p_t q\|_{3}&\lesssim\left\|\rho_0\Jk^{-1}\right\|_{L^{\infty}}\|\p_t^2 v\|_2+\PP_0+\int_0^T P(E_\kk(t))\dt+L.O.T. \\
\label{q1} \|q\|_{4}&\lesssim\left\|\rho_0\Jk^{-1}\right\|_{L^{\infty}}\|\p_t v\|_3+\PP_0+\int_0^T P(E_\kk(t))\dt+L.O.T. 
\end{align}

\bigskip

\noindent\textbf{Step 2: Divergence estimates of $v$}

The Eulerian divergence is $\diva X=\dive X+(\ak^{\mu\alpha}-\delta^{\mu\alpha})\p_\mu X_{\alpha}$, which together with \eqref{small1} implies 
\begin{equation}\label{divgap}
\begin{aligned}
\forall s>2.5&:~\|\dive X\|_{s-1}\lesssim\|\diva X\|_{s-1}+\|I-\ak\|_{s-1}\|X\|_s\lesssim\|\diva X\|_{s-1}+\epsilon\|X\|_s\\
\forall 1\leq s\leq 2.5&:~\|\dive X\|_{s-1}\lesssim\|\diva X\|_{s-1}+\|I-\ak\|_{L^{\infty}}\|X\|_s\lesssim\|\diva X\|_{s-1}+\epsilon\|X\|_s.
\end{aligned}
\end{equation} The $\epsilon$-terms can be absorbed by $\|X\|_s$ on LHS by choosing $\epsilon>0$ sufficiently small. So it suffices to estimate the Eulerian divergence which satisfies $\diva v=-\frac{R'(q)\Jk}{\rho_0}\p_t q$. Taking time derivatives in this equation, we get
\[
\diva \p_t^k v=-\p_t^{k}\left(\frac{R'(q)\Jk}{\rho_0}\p_t q\right)-[\p_t^k,\ak^{\mu\alpha}]\p_\mu v_{\alpha}\approx\frac{R'(q)\Jk}{\rho_0}\p_t^{k+1} q-[\p_t^k,\ak^{\mu\alpha}]\p_\mu v_{\alpha},~~k=0,1,2,3.
\]
Therefore, we have 
\begin{equation}\label{divav}
\begin{aligned}
\|\diva v\|_3&\lesssim\|R'(q)\Jk\|_{L^{\infty}}\|\p_t q\|_3+L.O.T.\\
\|\diva \p_t v\|_2&\lesssim \|R'(q)\Jk\|_{L^{\infty}}\|\p_t^2 q\|_2+L.O.T.\\
\|\diva \p_t^2 v\|_1&\lesssim \|R'(q)\Jk\|_{L^{\infty}}\|\p_t^3 q\|_1+L.O.T.\\
\|\diva \p_t^3 v\|_0&\lesssim \|R'(q)\Jk\|_{L^{\infty}}\|\p_t^4 q\|_0+L.O.T.\\
\end{aligned}
\end{equation}
 Combining \eqref{divgap} and \eqref{divav}, by choosing $\epsilon>0$ in \eqref{divgap} to be suitably small, we know the divergence estimates are all be reduced to one more time derivative of $q$:
\begin{align}
\label{divv}\|\dive v\|_3&\lesssim\epsilon\|v\|_4+\|R'(q)\Jk\|_{L^{\infty}}\|\p_t q\|_3+L.O.T.\\
\label{divvt}\|\dive \p_t v\|_2&\lesssim \epsilon\|v_t\|_3+\|R'(q)\Jk\|_{L^{\infty}}\|\p_t^2 q\|_2+L.O.T.\\
\label{divvtt}\|\dive \p_t^2 v\|_1&\lesssim\epsilon\|\p_t^2v\|_2+ \|R'(q)\Jk\|_{L^{\infty}}\|\p_t^3 q\|_1+L.O.T.\\
\label{divvttt}\|\dive \p_t^3 v\|_0&\lesssim\epsilon\|\p_t^3 v\|_1+ \|R'(q)\Jk\|_{L^{\infty}}\|\p_t^4 q\|_0+L.O.T.
\end{align}

Combining \eqref{qttt1}-\eqref{q1}. \eqref{divv}-\eqref{divvttt} with the previous analysis of curl and boundary estimates, the control of $\|\p_t^{4-k}q\|_k$ and $\|\p_t^{4-k}v\|_0$ are reduced to $\|\p_t^4 v\|_0$ and $\|\p_t^4 q\|_0$ together with the tangential estimates of $v$.

\subsubsection{Tangential space-time derivative estimates}\label{tgtime}

Denote $\dd=\TP$ or $\p_t$. First we consider the case $\dd^4=\p_t^4,\p_t^3\TP,\p_t^2\TP^2,\p_t\TP^3$, i.e., there are at least one time derivative in the four tangential derivatives. 

Direct computation gives
\begin{equation}\label{tg1}
\begin{aligned}
&\frac{1}{2}\io \rho_0\Jk^{-1}\left|\dd^4 v\right|^2\dy\bigg|^T_0\\
=&\int_0^T\io\dd^4(\rho_0\Jk^{-1}\p_tv)\cdot\dd^4 v\dy\dt\\
&+\underbrace{\frac{1}{2}\int_0^T\p_t\left(\rho_0\Jk^{-1}\right)\left|\dd^4 v\right|^2\dy\dt+\int_0^T\io\left[\dd^4,\rho_0\Jk^{-1}\right]\p_tv\cdot\dd^4 v\dy\dt}_{L_1}\\
=&-\int_0^T\io\dd^4(\pak Q)\cdot\dd^4 v\dy\dt+\underbrace{\int_0^T\io\dd^4\left((b\cdot\pak)b\right)\dd^4 v\dy\dt}_{L_2}+L_1,
\end{aligned}
\end{equation}where $L_1$ can be directtly bounded by $\int_0^T P(E_\kk(t))\dt$, and $L_2$ can be controlled by the Lorentz force $$L_2\lesssim\int_0^T\left\|\dd^4 v\right\|_0\left\|\dd^4\left((b\cdot\pak)b\right)\right\|_0\dt\lesssim\epsilon\int_0^T\left\|\dd^4\left((b\cdot\pak)b\right)\right\|_0^2\dt+\int_0^T\left\|\dd^4 v\right\|_0^2\dt.$$

For the first term, we first commute $\dd^4$ with $\pak$, then integrate $\pak$ by parts to get
\begin{equation}\label{tg2}
\begin{aligned}
&-\int_0^T\io\dd^4(\pak Q)\cdot\dd^4 v\dy\dt\\
=&-\int_0^T\io\pak\dd^4 Q\cdot\dd^4 v\dy\dt+\underbrace{\int_0^T\io \left[\dd^4,\ak^{\mu\alpha}\right]\p_\mu Q\cdot\dd^4 v_{\alpha}\dy\dt}_{L_3}\\
=&-\int_0^T\ig\ak^{3\alpha} \underbrace{\dd^4Q}_{=0}\dd^4 v_{\alpha}\dS\dt+\underbrace{\int_0^T\io\dd^4 Q\dd^4(\diva v)\dy\dt}_{K_1}\\
&+\underbrace{\int_0^T\io\dd^4 Q\cdot\left[\dd^4,\ak^{\mu\alpha}\right]\p_{\mu}v_{\alpha}\dy\dt}_{L_4}+\underbrace{\int_0^T\io\p_\mu\ak^{\mu\alpha}\dd^4 Q\dd^4v_{\alpha}}_{L_5}.
\end{aligned}
\end{equation}

Notice that, $\dd^4=\dd^3\p_t$ now contains at least one time derivative, and $\ak\sim\p\ek\cdot\p\ek$, so by the estimates correction term $\psi$, we know the $L^2$-norm of $\dd^4\ak\approx\dd^3\p\p_t\ek\cdot\p\ek+L.O.T.$ can be controlled by $P(E_\kk(t))$, and thus $L_3,L_4$ can be controlled by $\int_0^T P(E_\kk(t))\dt$. The term $L_5$ is also directly bounded by $\int_0^T P(E_{\kk}(t))\dt$.

Next we plug $\diva v=-\frac{\Jk R'(q)}{\rho_0}\p_t q$ and $Q=q+\frac{1}{2}|b|^2$ into $K_1$ to get
\begin{equation}\label{K11}
\begin{aligned}
K_1=&-\int_0^T\io\dd^4 q\dd^4\left(\frac{\Jk R'(q)}{\rho_0}\p_t q\right)\dy\dt-\int_0^T\io\dd^4 \left(\frac{1}{2}|b|^2\right)\dd^4\left(\frac{\Jk R'(q)}{\rho_0}\p_t q\right)\dy\dt\\
=&-\frac{1}{2}\io\frac{\Jk R'(q)}{\rho_0}\left|\dd^4 q(t)\right|^2\dy\bigg|^T_0\underbrace{-\int_0^T\io\dd^4 Q\cdot\left[\dd^4,\frac{\Jk R'(q)}{\rho_0}\right]\p_t q\dy\dt}_{L_6}\\
&\underbrace{-\int_0^T\io\frac{\Jk R'(q)}{\rho_0}\dd^4\left(\frac{1}{2}|b|^2\right)\dd^4\p_t q\dy\dt}_{K_2}.
\end{aligned}
\end{equation}

From the computation above, we find that the energy term $\|\p_t^4 q\|_0^2$ automatically appears if $\dd^4=\p_t^4$. The commutator term $L_6$ can be directly bounded by $\int_0^T P(E_{\kk}(t))\dt$. The term $K_2$ satisfies
\begin{equation}\label{K21}
K_2\lesssim\int_0^T\left\|\dd^4\left(\frac{1}{2}|b|^2\right)\right\|_0\cdot\left\|\dd^4\p_t q\right\|_0\dt\lesssim \epsilon\int_0^T\left\|\dd^4\p_t q\right\|_0^2\dt+\int_0^T P(E_\kk(t))\dt,
\end{equation}therefore we need the energy functional of 4-th time differentiated wave equation of $q$ and elliptic estimates Lemma \ref{GLL} to bound $K_2$. This will also be postponed to Section \ref{noncommon}.

\subsubsection{Tangential spatial derivative estimates: Alinhac good unknowns}\label{tgspace}

When $\dd^4=\TP^4$, the above analysis no longer works due to $[\TP^4,\ak^{\mu\alpha}]\p_{\mu}f$ being out of control. According to the discussion in Section 1.4, we introduce the Alinhac good unknowns. In specific, we replace $\TP^4$ by $\tpl$ due to the special structure of correction term $\psi$. Then for any function $f$ and its corresponding Alinhac good unknown $$\mathbf{f}:=\tpl f-\tpl\ek\cdot\pak f,$$ the following equality holds

\begin{align*}
\tpl(\pak^{\alpha}f)&=\pak^{\alpha}(\tpl f)+(\tpl\ak^{\mu\alpha})\p_\mu f+[\tpl,\ak^{\mu\alpha},\p_{\mu} f] \\
&=\pak^{\alpha}(\tpl f)-\TP\TL(\ak^{\mu\gamma}\TP\p_{\beta}\ek_{\gamma}\ak^{\beta\alpha})\p_\mu f+[\tpl,\ak^{\mu\alpha},\p_{\mu} f] \\
&=\pak^{\alpha}(\tpl f)-\ak^{\beta\alpha}\p_{\beta}\tpl\ek_{\gamma}\ak^{\mu\gamma}\p_\mu f-([\TP\TL,\ak^{\mu\gamma}\ak^{\beta\alpha}]\TP\p_{\beta}\ek_{\gamma})\p_\mu f+[\tpl,\ak^{\mu\alpha},\p_{\mu} f] \\
&=\underbrace{\pak^{\alpha}(\tpl f-\tpl \eta_{\gamma}\ak^{\mu\gamma}\p_\mu f)}_{=\pak^{\alpha}\mathbf{f}}+\underbrace{\tpl \eta_{\gamma}\pak^{\alpha}(\pak^{\gamma} f)-([\TP\TL,\ak^{\mu\gamma}\ak^{\beta\alpha}]\TP\p_{\beta}\ek_{\gamma})\p_\mu f+[\tpl,\ak^{\mu\alpha},\p_{\mu} f] }_{=:C^{\alpha}(f)},
\end{align*} where $[\tpl,g,h]:=\tpl(gh)-\tpl(g)h-g\tpl(h)$.
A direct computation yields that
\begin{align*}
\|\tpl \eta_{\gamma}\pak^{\alpha}(\pak^{\gamma} f)\|_0&\lesssim\|\ek\|_4\|\pak^{\alpha}(\pak^{\gamma} f)\|_{L^{\infty}}\;\\
\|([\TP\TL,\ak^{\mu\gamma}\ak^{\beta\alpha}]\TP\p_{\beta}\ek_{\gamma})\p_\mu f\|_0&\lesssim\|[\TP\TL,\ak^{\mu\gamma}\ak^{\beta\alpha}]\TP\p_{\beta}\ek_{\gamma}\|_0\|f\|_{W^{1,\infty}}\lesssim P(\|\ek\|_4)\|f\|_3 \\
\|[\tpl,\ak^{\mu\alpha},\p_{\mu} f]\|_0&\lesssim P(\|\ek\|_4)\|f\|_4.
\end{align*}
Therefore, Alinhac good unknown enjoys the following important properties:
\begin{equation}\label{alinhaca}
\tpl(\pak^{\alpha}f)=\pak^{\alpha}\mathbf{f}+C^{\alpha}(f)
\end{equation}
with 
\begin{equation}\label{alinhacc}
\|C^{\alpha}(f)\|\lesssim P(\|\ek\|_4)\|f\|_4.
\end{equation}

\bigskip

For \eqref{nonlinearkk}, we define $\VV=\tpl v-\tpl\ek\cdot\pak v$ and $\QQ=\tpl Q-\tpl\ek\cdot\pak Q$ to be the Alinhac good unknowns for $v$ and $Q=q+\frac{1}{2}|B|^2$. Taking $\tpl$ in the second equation of \eqref{nonlinearkk}, we get
\begin{equation}\label{goodeq}
\rho_0\Jk^{-1}\p_t\VV+\pak\QQ=\underbrace{\tpl\left((b\cdot\pak)b\right)+[\rho_0\Jk^{-1},\tpl]\p_t v_{\alpha}+\rho_0\Jk^{-1}\p_t(\tpl\ek\cdot\pak v)+C(Q)}_{\FF},
\end{equation}subject to 
\begin{equation}\label{goodQ}
\QQ=-\tpl\ek_{\beta}\ak^{3\beta}\p_3 Q~~~on~\Gamma.
\end{equation} and 
\begin{equation}\label{gooddiv}
\pak\cdot\VV=\tpl(\diva v)-C^{\alpha}(v_{\alpha})~~~in~\Omega.
\end{equation}

Taking $L^2$ inner product with $\VV$ and time integral, we have
\begin{equation}\label{tg00}
\frac{1}{2}\io \rho_0\Jk^{-1}\left|\VV(T)\right|^2\dy=\frac{1}{2}\io \rho_0\left|\VV(0)\right|^2\dy-\int_0^T\io\pak\QQ\cdot\VV\dy\dt+\int_0^T\io \frac{1}{2} \rho_0\p_t\Jk^{-1}\left|\VV\right|^2+\FF\cdot\VV\dy\dt.
\end{equation}
By \eqref{alinhaca}-\eqref{alinhacc} and direct computation, we know the last term on RHS can be directly controlled:
\begin{equation}\label{goodF}
\int_0^T\io \frac{1}{2}\rho_0\p_t \Jk^{-1}\left|\VV\right|^2+\FF\cdot\VV\dy\dt\lesssim\int_0^T P(E_\kk(t))\dt+\epsilon\int_0^T\left\|\tpl\left((b\cdot\pak)b\right)\right\|_0^2\dt.
\end{equation}

We integrate $\pak$ by parts to get
\begin{equation}\label{IJ1}
\begin{aligned}
&-\int_0^T\io\pak\QQ\cdot\VV\dy\dt=-\int_0^T\io\ak^{\mu\alpha}\QQ\cdot\VV_{\alpha}\dy\dt\\
=&-\int_0^T\ig \QQ(\ak^{3\alpha}\VV_{\alpha})\dS\dt+\int_0^T\io\QQ(\pak\cdot\VV)\dy\dt\underbrace{+\int_0^T\io(\p_\mu\ak^{\mu\alpha})\QQ\VV_{\alpha}\dy\dt}_{J_1}\\
=&\int_0^T\ig\p_3Q\tpl\ek_{\beta}\ak^{3\beta}\ak^{3\alpha}\VV_{\alpha}\dS\dt+\int_0^T\io\QQ\tpl(\diva v)\dy\dt-\int_0^T\io\QQ C^{\alpha}(v_{\alpha})\dy\dt+J_1\\
=:&I_0+I_1+J_2+J_1.
\end{aligned}
\end{equation}

The term $J_1, J_2$ can be directly controlled by $\int_0^T P(E_\kk(t))\dt$ and we omit the details. Let us first investigate $I_1=\int_0^T\io\QQ\tpl(\diva v)\dy\dt$. Invoking $\diva v=-\frac{\Jk R'(q)}{\rho_0}\p_t q$, we have

\begin{equation}\label{IJ2}
\begin{aligned}
&I_1=\int_0^T\io\QQ\tpl(\diva v)\dy\dt=\int_0^T\io\left(\tpl q+\tpl\left(\frac12 |b|^2\right)-\tpl\ek\cdot\pak Q\right)\tpl\left(-\frac{\Jk R'(q)}{\rho_0}\p_t q\right)\\
=&-\int_0^T\io \tpl q\cdot\tpl\left(-\frac{\Jk R'(q)}{\rho_0}\p_t q\right)\dy\dt+\int_0^T\io\tpl\ek\cdot\pak Q\tpl\left(-\frac{\Jk R'(q)}{\rho_0}\p_t q\right)\dy\dt\\
&-\int_0^T\io \tpl \left(\frac12 |b|^2\right)\cdot\tpl\left(-\frac{\Jk R'(q)}{\rho_0}\p_t q\right)\dy\dt\\
=:&I_{11}+I_{12}+I_{13}.
\end{aligned}
\end{equation}

The term $I_{11}$ and $I_{13}$ can be similarly computed as in $K_1$ and $K_{2}$ \eqref{K21}:
\begin{align}
\label{I111} I_{11}&\lesssim-\frac{1}{2}\io\frac{\Jk R'(q)}{\rho_0}\left|\tpl q\right|^2\dy\bigg|^T_0+\int_0^T P(E_\kk(t))\dt,\\
\label{I131} I_{13}&\lesssim \epsilon\int_0^T\left\|\tpl\p_t q\right\|_0^2\dt+\int_0^T P(E_\kk(t))\dt.
\end{align}

One can see that $I_{11}$ has been controlled, while $I_{13}$ requires the control of 5-th order wave equation of $q$ to absorb that $\epsilon$-term. This will again be postponed in Section \ref{noncommon}. For $I_{12}$, we just need to integrate $\p_t$ by parts
\begin{equation}\label{I121}
\begin{aligned}
I_{12}=&-\int_0^T\io\tpl\ek\cdot\pak Q\tpl\left(\frac{\Jk R'(q)}{\rho_0}\p_t q\right)\dy\dt\\
\approx&\io\frac{\Jk R'(q)}{\rho_0}\tpl\ek\cdot\pak Q\tpl q\dy\bigg|^{t=T}_{t=0}+\int_0^T\io \frac{\Jk R'(q)}{\rho_0}\tpl q\p_t(\tpl\ek\cdot\pak Q)\dy\dt \\
\lesssim& \epsilon\io\frac{\Jk R'(q)}{\rho_0}\left|\tpl q(T)\right|^2\dy+\frac{1}{8\epsilon}\left(\|\ek(T)\|_4^4+\|\pak Q(T)\|_{L^{\infty}}^4\right)+\PP_0+\int_0^T P(E_\kk(t))\dt.
\end{aligned}
\end{equation} Here in the last step we use $\epsilon$-Young's inequality to deal with the first term in the second line. The second term can be directly controlled by using the estimates of $\|\p_t\ek\|_4$ and we skip the details.

It remains to control the boundary integral $I_0$. Plugging $\VV_{\alpha}=\tpl v_{\alpha}-\tpl\eta\cdot\pak v_{\alpha}$ into $I_0$, we get
\begin{equation}\label{I001}
I_0=\int_0^T\ig \p_3 Q\ak^{3\alpha}\ak^{3\beta}\tpl\ek_{\beta}(\tpl\p_t\eta_{\alpha}-\tpl\psi-\tpl\ek\cdot\pak v_{\alpha})\dS\dt.
\end{equation}

The first term in \eqref{I001} produces the Taylor sign term contributing to the boundary term in $E_\kk(t)$ after commuting a $\lkk$:

\begin{equation}\label{I01}
\begin{aligned}
&\int_0^T\ig \p_3 Q\ak^{3\alpha}\ak^{3\beta}\tpl \ek_{\beta}\tpl\p_t\eta_{\alpha}\dS\dt \\
=&\int_0^T\ig\p_3 Q\ak^{3\alpha}\ak^{3\beta}\tpl \lkk\eta_{\beta}\tpl\p_t\lkk\eta_{\alpha}\dS\dt\\
&+\int_0^T\ig \left(\tpl\lkk\eta_{\beta}\right)\left(\left[\lkk,\p_3 Q\ak^{3\alpha}\ak^{3\beta}\right]\tpl\p_t\eta_{\alpha}\right)\dS\dt \\
=&\frac{1}{2}\ig \p_3 Q\left|\ak^{3\alpha}\tpl\lkk\eta_{\alpha}\right|^2\dS\bigg|^T_0-\frac{1}{2}\int_0^T\ig\p_t\p_3 Q \left|\ak^{3\alpha}\tpl\lkk\eta_{\alpha}\right|^2\dS \\
&\underbrace{-\int_0^T\ig \p_3 Q \ak^{3\beta}\tpl\lkk\eta_{\beta}\p_t\ak^{3\alpha}\tpl\lkk\eta_{\alpha}\dS\dt}_{I_{01}}\\
&+\underbrace{\int_0^T\ig \left(\tpl\lkk\eta_{\beta}\right)\left(\left[\lkk,\p_3 Q\ak^{3\alpha}\ak^{3\beta}\right]\tpl\p_t\eta_{\alpha}\right)\dS\dt}_{L_7}.
\end{aligned}
\end{equation}
$L_7$ can be directly controlled after integrating $\TP^{1/2}$ by parts
\begin{equation}\label{L7}
\begin{aligned}
L_7=&\int_0^T\ig \left(\TP^{3/2}\TL\lkk\eta_{\beta}\right)\TP^{1/2}\left(\left[\lkk,\p_3 Q\ak^{3\alpha}\ak^{3\beta}\right]\TP(\TP\TL\p_t\eta_{\alpha})\right)\dS \\
\lesssim&\int_0^T\left\|\eta\right\|_4\left|\p_3 Q\ak^{3\alpha}\ak^{3\beta}\right|_{W^{1,\infty}}\left|\TP\TL\p_t\eta_{\alpha}\right|_{1/2} \\
\lesssim&\int_0^T\|\eta\|_4\|Q\|_4\|\p\ak\|_{L^{\infty}} \|\p_t\eta\|_4\lesssim \int_0^TP(E_\kk(t))\dt.
\end{aligned}
\end{equation}

In $I_{01}$, we have $\p_t\ak^{3\alpha}=-\ak^{3\gamma}\p_{\mu}\p_t\ek_{\gamma}\ak^{\mu\alpha}$. Note that $\p_t\eta=v+\psi$. The $\psi$ term can be directly bounded by using the mollifier property, which the contribution of $v$ cannot be bounded directly. Luckily, later on we will see that term can be cancelled together with another higher order term in \eqref{I001} with the help of $\psi$. In specific we have
\begin{equation}\label{I002}
\begin{aligned}
B_1=&\underbrace{\int_0^T\ig \p_3 Q \ak^{3\beta}\tpl\lkk\eta_{\beta}\ak^{3\gamma}\p_3\p_t\ek_{\gamma}\ak^{3\alpha}\tpl\lkk\eta_{\alpha}\dS}_{L_8}\\
&+\underbrace{\int_0^T\ig \p_3 Q \ak^{3\beta}\tpl\lkk\eta_{\beta}\ak^{3\gamma}\TP_i\p_t\lkk^2\psi_{\gamma}\ak^{i\alpha}\tpl\lkk\eta_{\alpha}\dS}_{L_9} \\
&+\underbrace{\int_0^T\ig \p_3 Q \ak^{3\beta}\tpl\lkk\eta_{\beta}\ak^{3\gamma}\TP_i\lkk^2v_{\gamma}\ak^{i\alpha}\tpl\lkk\eta_{\alpha}\dS\dt}_{I_{02}}.
\end{aligned}
\end{equation}

$L_8$ can be directly bounded by the Taylor sign part
\begin{equation}\label{L8}
L_8\lesssim\int_0^T\left|\ak^{3\beta}\tpl\lkk\eta_{\beta}\right|_0^2\cdot\left|\p_3 Q\ak^{3\gamma}\p_e\p_t\ek_{\gamma}\right|_{L^{\infty}}\lesssim \int_0^TP(E_\kk(t))\dt
\end{equation}

$L_9$ can be bounded by using $|\tpl\lkk\eta|\lesssim\kk^{-1/2}|\eta|_{7/2}$ by sacrificing a factor $\kk^{-1/2}$. 
\[
L_9\lesssim\int_0^T \frac{1}{\sqrt{\kk}}\left|\eta\right|_{7/2}\left|\p_3 Q\ak^{3\gamma}\ak^{i\alpha}\right|_{L^{\infty}}\left|\ak^{3\beta}\tpl\lkk\eta_{\beta}\right|_0\left|\TP\tilde{\psi}\right|_{L^{\infty}}\dt
\]
This can be compensated by estimating $|\TP\psi|_{L^{\infty}}$ and $W^{1,4}(\T^2)\hookrightarrow L^{\infty}(\T^2)$. Since $\psi$ removes the zero-frequency part (so the lowest frequency is $\pm 1$ because the frequency on $\T^2$ is discrete), we know $|\TL\psi|_{L^4}$ is comparable to $|\TP\psi|_{W^{1,4}}$. Therefore,
\begin{align*}
\left|\TP\psi\right|_{L^{\infty}}\lesssim&\left|\TL\psi\right|_{L^4}=\left|\mathbb{P}_{\neq 0}\left(\TL\eta_{\beta}\ak^{i\beta}\TP_i\lkk^2 v-\TL\lkk^2\eta_{\beta}\ak^{i\beta}\TP_i v\right)\right|_{L^4}\\
\lesssim&\left|\TL\eta_{\beta}\ak^{i\beta}\TP_i\lkk^2 v-\TL\lkk^2\eta_{\beta}\ak^{i\beta}\TP_i v\right|_{L^4}\\
=&\left|\TL(\eta_{\beta}-\lkk^2\eta_{\beta})\ak^{i\beta}\TP_i\lkk^2 v-\TL\lkk^2\eta_{\beta}\ak^{i\beta}\TP_i(v-\lkk^2 v)\right|_{L^4}\\
\lesssim&\left|\TL\eta_{\beta}-\TL\ek_{\beta}\right|_{L^{\infty}}\left|\ak\right|_{L^{\infty}}\left|\TP\tilde{v}\right|_{0.5}+\left|\TL\ek\right|_{1/2}\left|\ak\right|_{L^{\infty}}\left|\TP(v-\lkk v)\right|_{L^{\infty}}\\
\lesssim&\sqrt{\kk} P(E_{\kk}(t)).
\end{align*}
Therefore we know $L_9$ can be bounded uniformly in $\kk$
\begin{equation}\label{L9}
L_9\lesssim\int_0^T P(E_{\kk}(t))\dt
\end{equation}

The estimate of $I_{02}$ will be postponed after computing the third term in \eqref{I001}, for which we repeat the steps above to get
\begin{equation}\label{I003}
\begin{aligned}
&-\int_0^T\ig \p_3 Q\ak^{3\alpha}\ak^{3\beta}\tpl \ek_{\beta}\tpl\ek\cdot\pak v_{\alpha}~dS \\
=&-\int_0^T\ig \p_3 Q\ak^{3\alpha}\ak^{3\beta}\tpl \ek_{\beta}\tpl\ek_{\gamma}\ak^{3\gamma} \p_3v_{\alpha}~dS\dt\\
& \underbrace{-\int_0^T\ig \p_3 Q\ak^{3\alpha}\ak^{3\beta}\tpl \ek_{\beta}\tpl\ek_{\gamma}\ak^{i\gamma} \TP_iv_{\alpha}~dS\dt}_{I_{03}} \\
=&\int_0^T\ig\left(-\p_3 Q\ak^{3\alpha}\p_3v_{\alpha}\right)\left(\ak^{3\beta}\tpl \ek_{\beta}\right)\left(\ak^{3\gamma}\tpl\ek_{\gamma}\right)\dS\dt+I_{03} \\
\end{aligned}
\end{equation}

The first term can be bounded by Taylor sign after commuting one $\lkk$:
\[
\left|\ak^{3\beta}\tpl\lkk\eta_{\beta}\right|_0\lesssim\left|\lkk\left(\ak^{3\beta}\tpl\lkk\eta_{\beta}\right)\right|_0+\left|\left[\lkk,\ak^{3\beta}\right]\tpl\lkk\eta_{\beta}\right|\lesssim P(E_{\kk}(t)).
\]

Therefore, it remains to control 
\begin{align}
\label{I040} I_{04}&:=-\int_0^T\ig\p_3 Q\ak^{3\alpha}\ak^{3\beta}\tpl\eta_{\beta}\tpl\psi.\\
\label{I020} I_{02}&:=\int_0^T\ig \p_3 Q \ak^{3\beta}\tpl\lkk\eta_{\beta}\ak^{3\gamma}\TP_i\lkk^2v_{\gamma}\ak^{i\alpha}\tpl\lkk\eta_{\alpha}\dS\dt\\
\label{I030} I_{03}&:=-\int_0^T\ig \p_3 Q\ak^{3\alpha}\ak^{3\beta}\tpl \ek_{\beta}\tpl\ek_{\gamma}\ak^{i\gamma} \TP_iv_{\alpha}~dS\dt.
\end{align}

Plugging the expression of $\TL\psi$ into \eqref{I040}, we get
\begin{align}
\label{I041} I_{04}=&-\int_0^T\ig \p_3 Q\ak^{3\alpha}\ak^{3\beta}\tpl \ek_{\beta}\TP^2\left(\TL\eta_{\gamma}\ak^{ir}\TP_i\lkk^2 v_{\alpha}\right)\dS\dt\\
\label{I042} &+\int_0^T\ig \p_3 Q\ak^{3\alpha}\ak^{3\beta}\tpl \ek_{\beta}\TP^2\ek_{\gamma}\ak^{i\gamma} \TP_iv_{\alpha}\dS\dt\\
\label{I043} &+\int_0^T\ig \p_3 Q\ak^{3\alpha}\ak^{3\beta}\tpl \ek_{\beta}\left(\left[\TP^2,\ak^{i\gamma}\TP_i v_{\alpha}\right]\TL\ek_{\gamma}\right)\dS\dt\\
\label{I044} &+\int_0^T\ig \p_3 Q\ak^{3\alpha}\ak^{3\beta}\tpl \ek_{\beta}\TP^2\mathbb{P}_{=0}\left(\TL\eta_{\beta}\ak^{i\beta}\TP_i\lkk^2 v-\TL\lkk^2\eta_{\beta}\ak^{i\beta}\TP_i v\right)\dS\dt.
\end{align}

Clearly, \eqref{I042} exactly cancels with \eqref{I030}, \eqref{I043} can be directly bounded by $\int_0^T P(E_{\kk}(t))\dt$, and \eqref{I044} can be controlled by using Bernstein's inequality $|\mathbb{P}_{\neq 0} f|_2\approx |f|_0$.
\begin{equation}\label{I045}
\begin{aligned}
(3.96)\lesssim&\int_0^T\left|\p_3 Q\ak^{3\alpha}\right|_{L^{\infty}}\left|\ak^{3\beta}\tpl\lkk\eta_{\beta}\right|_0\left|\TL\eta_{\beta}\ak^{i\beta}\TP_i\tilde{v}-\TL\ek_{\beta}\ak^{i\beta}\TP_i v\right|_0\dt\\
\lesssim&\int_0^T P(E_{\kk}(t))\dt.
\end{aligned}
\end{equation} 

In \eqref{I041}, we move one $\lkk$ on $\eta_{\beta}$ to $\eta_{\alpha}$ to cancel $I_{02}$:
\begin{align}
\label{I0411} (3.93)=&-\int_0^T\ig \p_3 Q \ak^{3\beta}\tpl\lkk\eta_{\beta}\left(\ak^{3\alpha}\TP_i\lkk^2v_{\alpha}\right)\left(\ak^{i\gamma}\tpl\lkk\eta_{\gamma}\right)\dS\dt \\
\label{I0412} &-\int_0^T\ig \p_3 Q \ak^{3\beta}\tpl\lkk\eta_{\beta}\left(\left[\lkk,\ak^{3\alpha}\ak^{3\beta}\ak^{ir}\TP_i\lkk^2 v_{\alpha}\right]\tpl\eta_{\gamma}\right)\dS \\
\label{I0413} &-\int_0^T\ig \p_3 Q\ak^{3\alpha}\ak^{3\beta}\tpl \ek_{\beta} \left(\left[\TP^2,\ak^{i\gamma}\TP_i\lkk^2v_{\alpha}\right]\TL\eta_{\gamma}\right)\dS\\
\label{I0414}=&-I_{02}+(3.99)+(3.100).
\end{align}

Summarising \eqref{I001}-\eqref{I003}, \eqref{I041}-I{045} and I{0414}, we are able to control the boundary integral $I_0$ by invoking Taylor sign condition \eqref{taylor1}: $\p_3Q\leq -\frac{c_0}{2}$
\begin{equation}\label{I0}
I_0\lesssim-\frac{c_0}{4}\left|\ak^{3\alpha}\tpl\lkk\eta_{\alpha}\right|_0^2\bigg|^T_0+\int_0^T P(E_\kk(t))\dt
\end{equation}
Combining \eqref{I0} with previous estimates \eqref{tg00}-\eqref{I121}, we finish the estimates of full tangential derivatives by
\begin{equation}\label{tg0}
\begin{aligned}
&\frac{1}{2}\io\rho\left|\TP^4 v\right|_0^2\dy+\frac{1}{2}\io\frac{\Jk R'(q)}{\rho_0}\left|\TP^4 q\right|_0^2\dy+\frac{c_0}{4}\left|\ak^{3\alpha}\tpl\lkk\eta_{\alpha}\right|_0^2\\
\lesssim&\PP_0+\int_0^T P(E_\kk(t))\dt+\epsilon\int_0^T\left\|\tpl\left((b\cdot\pak)b\right)\right\|_0^2+\left\|\tpl\p_t q\right\|_0^2\dt
\end{aligned}
\end{equation}

\subsection{Common control of the higher order heat and wave equations}\label{noncommon}

\subsubsection{Summarizing the previous energy estimates}

Before going to the next step, let us summarize what energy estimates we have gotten. First, from div-curl restimates(\eqref{vhodge}, \eqref{vbdry}, \eqref{vtbdry}, \eqref{curleq2}, \eqref{qttt1}-\eqref{q1}, \eqref{divv}-\eqref{divvttt}) and tangential estimates (\eqref{tg1}-\eqref{K21} and \eqref{tg0}) in Section \ref{nonvp}, we got
\begin{equation}\label{sumvp}
\begin{aligned}
&\sum_{k=0}^4\left\|\p_t^{4-k} v\right\|_k^2+\left\|\p_t^{4-k} q\right\|_{k}^2+\left|\ak^{3\alpha}\tpl\lkk\eta_{\alpha}\right|_0^2\\
\lesssim&\epsilon \left(\sum_{k=0}^3\left\|\p_t^{4-k} v\right\|_k^2+\left\|\p_t^{4-k} q\right\|_{k}^2\right)+\PP_0+ P(E_\kk(T))\int_0^T P(E_\kk(t))\dt\\
&+\epsilon\sum_{k=0}^4\int_0^T\left\|\TP^{k}\p_t^{4-k}\left((b\cdot\pak)b\right)\right\|_0^2+\left\|\TP^{k}\p_t^{4-k}\p_t q\right\|_0^2\dt
\end{aligned}
\end{equation}

The magnetic field $b$ has the following estimates by combining \eqref{b4}, \eqref{bt3btt2}, \eqref{bttt1} and \eqref{b402}:
\begin{equation}\label{sumb}
\sum_{k=0}^4\left\|\p_t^{4-k} b(T)\right\|_k^2\lesssim\PP_0+P(E_\kk(T))\int_0^TP(E_{\kk}(t))\dt+\epsilon H_\kk(T).
\end{equation}

Summing up \eqref{sumvp} and \eqref{sumb}, we get the estimates of $E_\kk(T)$ as 
\begin{equation}\label{sumEkk}
\begin{aligned}
E_\kk(T)\lesssim&\PP_0+P(E_\kk(T))\int_0^TP(E_{\kk}(t))\dt\\
&+\epsilon\left(H_\kk(T)+\sum_{k=0}^4\int_0^T\left\|\TP^{k}\p_t^{4-k}\left((b\cdot\pak)b\right)\right\|_0^2+\left\|\TP^{k}\p_t^{4-k}\p_t q\right\|_0^2\dt\right)
\end{aligned}
\end{equation}

\eqref{sumEkk} shows that we need $H_\kk, W_\kk$ together with Lorentz force to absorb the $\epsilon$-term in \eqref{sumEkk}. From \eqref{bb1}, we know Lorentz force can be controlled by $E_\kk(T)$ plus a term in $H_\kk(T)$
\begin{equation}\label{sumbb}
\sum_{k=0}^4\left\|\p_t^{4-k}((b\cdot\pak)b)\right\|_k^2\lesssim \left\|b\right\|_2^2\left\|\pak \p_t^4 b\right\|_0^2+ P(E_{\kk}(T))+\PP_0+P(E_{\kk}(T))\int_0^TP(E_{\kk}(T))\dt.
\end{equation} 

Also notice that $\p_t q=0$ on $\Gamma$, which allows us to reduce the space-time control of $\p_t q$ to the full time derivative case by using Christodoulou-Lindblad elliptic estimates Lemma \ref{GLL} (See Section \ref{qe}). Therefore, all the estimates of the total energy $\EE_\kk$ in \eqref{EEkk} are reduced to seek for a common control of $W_\kk(T)$ and $H_{\kk}(T)$, the energy functionals of 4-th time-differentiated heat and wave equations, by $\epsilon\EE_{\kk}(T)+\PP_0+P(\EE_\kk(T))\int_0^T P(\EE_\kk(t))\dt$.

\subsubsection{Elliptic estimates of $\p_tq$}\label{qe}

Let us recall the heat equation of $b$ and wave equation of $q$

\begin{equation}\label{heatb0}
\p_t b-\lapak b=(b\cdot\pak)v-b\diva v,
\end{equation}

\begin{equation}\label{waveq0}
\begin{aligned}
&\frac{\Jk R'(q)}{\rho_0}\p_t^2 q-\lapak q\\
=&\frac{1}{2}\lapak|b|^2+\frac{R'(q)}{R}\left(R\p_t v\cdot\pak q\right)+R\p_t\ak^{\mu\alpha}\p_{\mu}v_{\alpha}-\pak^{\alpha} b\cdot\pak b_{\alpha}+\left(\Jk\frac{R'(q)}{\rho_0}-\frac{R\Jk R''(q)}{\rho_0}\right)(\p_t q)^2\\
=&b\cdot\lapak b+\frac{R'(q)}{R}\left(R\p_t v\cdot\pak q\right)+R\p_t\ak^{\mu\alpha}\p_{\mu}v_{\alpha}-\pak^{\alpha} b\cdot\pak b_{\alpha}+|\lapak b|^2+\left(\Jk\frac{R'(q)}{\rho_0}-\frac{R\Jk R''(q)}{\rho_0}\right)(\p_t q)^2\\
=&b\cdot\lapak b+R\p_t\ak^{\mu\alpha}\p_{\mu}v_{\alpha}-\pak^{\alpha} b\cdot\pak b_{\alpha}+|\pak b|^2\\
&+\frac{R'(q)}{R}\left((\pak Q-(b\cdot\pak)b)\cdot\pak q\right)+\left(\Jk\frac{R'(q)}{\rho_0}-\frac{R\Jk R''(q)}{\rho_0}\right)(\p_t q)^2\\
=:&b\cdot\lapak b+w_0.
\end{aligned}
\end{equation}Here we note that all the terms in $w_0$ only contain one derivative!

In \eqref{sumEkk}, there are 4-th order space-time tangential derivatives of $\p_t q$. It seems that we can directly consider the energy functional of $\dd^4$-differentiated wave equation of $q$ \eqref{waveq0}. However, that also requires the control of commutator $[\dd^4,\diva]\pak q$, which is out of control when $\dd^4=\TP^4$. Therefore, we have to use Lemma \ref{GLL} to reduce spatial derivatives to time derivatives.

We start with full spatial derivatives. Since $\|\p_t q\|_4\approx\|\pak \p_t q\|_3$, we have
\begin{equation}\label{qte1}
\|\p_t q\|_4\lesssim P\left(\|\ek\|_3\right)\left(\|\p_t\lapak q\|_2+\left\|[\lapak,\p_t]q\right\|_2+\|\TP\ek\|_3\|\p_t q\|_3\right)\lesssim P(\|\ek\|_3)\|\p_t\lapak q\|_2+ P(E_\kk(T))
\end{equation}

Invoking the $\p_t$-differentiated wave equation, we find that
\[
\p_t\lapak q=\frac{\Jk R'(q)}{\rho_0}\p_t^3 q-b\cdot\p_t\lapak b-\p_t w_0.
\]
Then using the heat equation \eqref{heatb0} to reduce $\lapak b$ to lower order terms, we get
\[
\p_t\lapak q=\frac{\Jk R'(q)}{\rho_0}\p_t^3 q-b\cdot\p_t\left(\p_t b-(b\cdot\pak)v+b\diva v\right)-\p_t w_0.
\]
Plugging this back to \eqref{qte1}, we trade two spatial derivatives by two time derivatives
\begin{equation}\label{qte}
\|\p_t q\|_4\lesssim P(\|\ek\|_3)\|\p_t^3 q\|_2+ P(E_\kk(T)).
\end{equation}

Repeating the same thing for $\|\p_t^2 q\|_3,\|\p_t^3 q\|_2$, we are able to get the following reduction
\begin{align}
\label{qtte}\|\p_t^2 q\|_3\lesssim& P(\|\ek\|_2)\|\p_t^4 q\|_1+ P(E_\kk(T))\approx P(\|\ek\|_2)\|\pak\p_t^4 q\|_0+ P(E_\kk(T)),\\
\label{qttte}\|\p_t^3 q\|_2\lesssim& P(\|\ek\|_2)\|\p_t^5 q\|_0+ P(E_\kk(T)).
\end{align}

From \eqref{qte}-\eqref{qttte}, we are able to reduce the energy estimates of $\p_t q$ to $\|\pak\p_t^4 q\|_0$ and $\|\p_t^5 q\|_0$, which motivates us to consider the 4-th time-differentiated wave equation \eqref{waveq0} together with 4-th time differentiated heat equation \eqref{heatb0}.

\subsubsection{4-th time differentiated heat and wave equation}\label{qb4}

Taking $\p_t^4$ in \eqref{heatb0} and \eqref{waveq0}, we get
\begin{equation}\label{heatb4}
\begin{aligned}
\p_t^5b-\lapak\p_t^4 b=&\p_t^4\left((b\cdot\pak)v-b\diva v\right)+[\p_t^4,\lapak] b\\
=&(b\cdot\pak)\p_t^4v+b\frac{\Jk R'(q)}{\rho_0}\p_t^5 q+\left[\p_t^4,\lapak\right] b+\left[\p_t^4,b\cdot\pak\right]v+\left[\p_t^4,b\frac{\Jk R'(q)}{\rho_0}\right]\p_t q\\
=&:h_5
\end{aligned}
\end{equation}

In $h_5$, there are 5 derivatives of $v$. We can invoke the second equation of \eqref{nonlinearkk} to reduce to $q$ and $B$, e.g., 
\[
\|\p_t^5 v\|_0\lesssim\|\p_t^4((b\cdot\pak)b)\|_0+\|\p_t^4\pak Q\|_0+\cdots,
\]in which the leading order terms above are $\pak\p_t^4 b$ and $\pak\p_t^4 q$, exactly the same as part of energy functional $W_\kk$ and $H_\kk$.

Taking $L_t^2L_x^2$-inner product with $\p_t^5 b$ and integrating by parts, we get
\begin{equation}
\begin{aligned}
RHS=&\int_0^T\io h_5\cdot\p_t^5 b\dy\dt\\
LHS=&\int_0^T\io\left|\p_t^5 b\right|^2\dt-\int_0^T\io\p_t^5 b\cdot\lapak\p_t^4 b\dy\dt\\
=&\int_0^T\io\left|\p_t^5 b\right|^2\dt+\int_0^T\io\p_t\left(\pak\p_t^4b\right)\cdot\left(\pak\p_t^4 b\right)\dy\dt+\int_0^T\io\p_\mu\ak^{\mu\alpha}\left(\p_t^5b\right)\cdot\left(\pak\p_t^4 b\right)\dy\dt\\
&+\int_0^T\io\left(\left[\pak,\p_t\right]\p_t^4 b\right)\cdot\left(\pak\p_t^4 b\right)\dy\dt-\int_0^T\ig\ak^{3\alpha}\p_t^5 b\cdot\left(\pak\p_t^4 b\right)_{\alpha}\dS\dt
\end{aligned}
\end{equation}
 
Since $b=\mathbf{0}$ on the boundary, we know the boundary integral vanishes. The first and second integrals give the energy functional $H_\kk(T)-H_\kk(0)$. Therefore, we have
\begin{equation}\label{Hkk4}
\begin{aligned}
&H_\kk(T)-H_\kk(0)=\int_0^T\io\left|\p_t^5 b\right|^2\dy\dt+\io\left|\pak\p_t^4b\right|^2\dy\bigg|^T_0\\
=&\int_0^T\io h_5\cdot\p_t^5 b\dy\dt-\int_0^T\io\p_\mu\ak^{\mu\alpha}\left(\p_t^5b\right)\cdot\left(\pak\p_t^4 b\right)\dy\dt\\
&-\int_0^T\io\left(\left[\pak,\p_t\right]\p_t^4 b\right)\cdot\left(\pak\p_t^4 b\right)\dy\dt\\
\lesssim&\epsilon\int_0^T\io\left|\p_t^5 b\right|^2\dy\dt+\int_0^T\|h_5\|_0^2\dt+\int_0^T\left\|\pak\p_t^4b\right\|_0\|\ek\|_{4}\dt\\
&+\int_0^T\|\p_t\ak\|_{L^{\infty}}\left\|\p_t^4 b\right\|_0\left\|\pak\p_t^4 b\right\|_0\dt\\
\lesssim&\epsilon\int_0^T\io\left|\p_t^5 b(t)\right|^2\dy\dt+\int_0^T P(E_\kk(t))+\left(H_\kk(t)+W_{\kk}(t)\right)\dt\\
\lesssim&\epsilon H_\kk(T)+\int_0^T P(E_\kk(t))+\left(H_\kk(t)+W_{\kk}(t)\right)\dt.
\end{aligned}
\end{equation}Here $W_\kk$ appears in the last term because $\p_t^5 v$ contains $\pak\p_t^4 q$ which is part of $W_\kk(t)$.

Next we $\p_t^4$ differentiate \eqref{waveq0} to get
\begin{align*}
\frac{\Jk R'(q)}{\rho_0}\p_t^6q-\lapak\p_t^4q=b\cdot\lapak\p_t^4 b+\p_t^4 w_0+\left[b\cdot\lapak,\p_t^4\right]+\left[\p_t^4,\lapak\right]q+\left[\frac{\Jk R'(q)}{\rho_0},\p_t^4\right]\p_t^2 q.
\end{align*}
Then plug the heat equation \eqref{heatb4} $\lapak b=\p_t^5 b-h_5$ to get
\begin{equation}\label{waveq4}
\begin{aligned}
\frac{\Jk R'(q)}{\rho_0}\p_t^6q-\lapak\p_t^4q=&b\cdot\left(\p_t^5 b-h_5\right)+\p_t^4 w_0+\left[b\cdot\lapak,\p_t^4\right]+\left[\p_t^4,\lapak\right]q+\left[\frac{\Jk R'(q)}{\rho_0},\p_t^4\right]\p_t^2 q\\
=:&w_5
\end{aligned}
\end{equation}

Taking $L_t^2L_x^2$ inner product with $\p_t^5 q$, we have
\begin{equation}
\begin{aligned}
RHS=&\int_0^T\io w_5\cdot\p_t^5 q\dy\dt\\
LHS=&\int_0^T\io\frac{\Jk R'(q)}{\rho_0}\p_t^6q\p_t^5 q\dt-\int_0^T\io\p_t^5 q\cdot\lapak\p_t^4 q\dy\dt\\
=&\frac12\io\frac{\Jk R'(q)}{\rho_0}\left|\p_t^5 q\right|^2\dy\bigg|^T_0+\int_0^T\io\p_t\left(\pak\p_t^4q\right)\cdot\left(\pak\p_t^4 q\right)\dy\dt\\
&+\int_0^T\io\p_\mu\ak^{\mu\alpha}\left(\p_t^5q\right)\cdot\left(\pak\p_t^4 q\right)\dy\dt+\int_0^T\io\left(\left[\pak,\p_t\right]\p_t^4 q\right)\cdot\left(\pak\p_t^4 q\right)\dy\dt\\
&-\int_0^T\ig\ak^{3\alpha}\underbrace{\p_t^5 q}_{0}\cdot\left(\pak\p_t^4 q\right)_{\alpha}\dS\dt-\int_0^T\io\frac12\p_t\left(\frac{\Jk R'(q)}{\rho_0}\right)\left|\p_t^5 q\right|^2\dy\dt,
\end{aligned}
\end{equation} and thus we have
\begin{equation}
\begin{aligned}
&W_\kk(T)-W_\kk(0)=\frac{1}{2}\io\frac{\Jk R'(q)}{\rho_0}\left|\p_t^5 q\right|^2\dy\bigg|^T_0+\frac{1}{2}\io\left|\pak\p_t^4 q\right|^2\dy\bigg|^T_0\\
=&\int_0^T\io w_5\cdot\p_t^5 q\dy\dt+\int_0^T\io\frac12\p_t\left(\frac{\Jk R'(q)}{\rho_0}\right)\left|\p_t^5 q\right|^2\dy\dt\\
&-\int_0^T\io\p_\mu\ak^{\mu\alpha}\left(\p_t^5q\right)\cdot\left(\pak\p_t^4 q\right)\dy\dt-\int_0^T\io\left(\left[\pak,\p_t\right]\p_t^4 q\right)\cdot\left(\pak\p_t^4 q\right)\dy\dt.
\end{aligned}
\end{equation}

The term $\|w_5\|_0^2$ can be controlled by $H_\kk(T)+W_\kk(T)+P(E_\kk(T))$, because all the terms in $w_5$ are of $\leq 5$ derivatives, and can be controlled by  either heat energy or wave energy. The precise detailed estimate of $w_5$ is referred to (7.12)-(7.19) in the author's previous work \cite{ZhangCRMHD1}. Therefore, we have
\begin{equation}\label{Wkk4}
W_\kk(T)-W_\kk(0)\lesssim\epsilon \left(W_\kk(T)+H_\kk(T)\right)+\int_0^T H_\kk(t)+W_\kk(t)+P(E_\kk(t))\dt
\end{equation}

Summing up \eqref{Hkk4} and \eqref{Wkk4}, we establish the common control of $H_\kk$ and $W_\kk$
\begin{equation}\label{sumHW}
\left(H_\kk(T)+W_\kk(T)\right)-\left(H_\kk(0)+W_\kk(0)\right)\lesssim\epsilon \left(W_\kk(T)+H_\kk(T)\right)+\int_0^T H_\kk(t)+W_\kk(t)+P(E_\kk(t))\dt
\end{equation}

\subsubsection{Closing the energy estimates}

Combining \eqref{sumEkk}, \eqref{sumbb} and \eqref{sumHW}, we get the inequality
\begin{equation}
\begin{aligned}
\EE_\kk(T)-\EE_\kk(0)=&\left(E_\kk+H_\kk+W_\kk+\sum_{k=0}^4\left\|\p_t^{4-k}((b\cdot\pak)b)\right\|_k^2\right)\bigg|^T_0\\
\lesssim&\epsilon\left(H_\kk(T)+W_\kk(T)\right)+P(E_\kk(T))\int_0^T P(\EE_\kk(t))\dt.
\end{aligned}
\end{equation}
By choosing $\epsilon>0$ sufficiently small, the $\epsilon$-term can be absorbed by LHS, and thus we get
\begin{equation}\label{EEkkt}
\EE_\kk(T)-\EE_\kk(0)\lesssim P(E_\kk(T))\int_0^T P(\EE_\kk(t))\dt.
\end{equation}
Finally, by the Gronwall-type inequality in Tao \cite{tao2006nonlinear}, we know there exists some $T>0$ only depending on $\|v_0\|_4,\|b_0\|_5,\|q_0\|_4,\|\rho_0\|_4$, such that
\begin{equation}\label{nonEE}
\sup_{0\leq t\leq T}\EE_\kk(t)\leq 2\EE_\kk(0).
\end{equation} This finalizes the proof of Proposition \ref{nonlinear1}, i.e., uniform-in-$\kappa$ a priori estimate for the nonlinear approximation system \eqref{nonlinearkk}.

\section{Existence of solutions to the linearized and nonlinear approximation system}\label{linear}

In this section we are going to prove the local existence of the nonlinear $\kk$-approximation system \eqref{nonlinearkk}. The method is standard Picard type iteration. We start with the trivial solution $(\eta^{(0)},v^{(0)},b^{(0)},q^{(0)})=(\eta^{(1)},v^{(1)},b^{(1)},q^{(1)})=(\text{Id},0,0,0)$. Suppose we have already constructed $\{(\eta^{(k)},v^{(k)},b^{(k)},q^{(k)})\}_{0\leq k\leq n}$ for some given $n\in\N^*$. Inductively we define $(\eta^{(n+1)},v^{(n+1)},b^{(n+1)},q^{(n+1)})$ by linearzing \eqref{nonlinearkk} near $a^{(n)}:=[\p\eta^{(n)}]^{-1}$.

\begin{equation}\label{linearn}
\begin{cases}
\p_t \eta^{(n+1)}=v^{(n+1)}+\psi^{(n)} &~~~\text{in } \Omega, \\
\frac{\rho_0}{\Jk^{(n)}}\p_t v^{(n+1)}=(b^{(n)}\cdot\nabla_{\ak^{(n)}}) b^{(n+1)}-\nabla_{\ak^{(n)}} Q^{(n+1)},~~Q^{(n+1)}=q^{(n+1)}+\frac{1}{2}|b^{(n+1)}|^2 &~~~\text{in } \Omega, \\
\frac{\Jk^{(n)} R'(q^{(n)})}{\rho_0}\p_tq^{(n+1)}+\dive_{\ak^{(n)}} v^{(n+1)}=0 &~~~\text{in } \Omega, \\
\p_t b^{(n+1)}-\Delta_{\ak^{(n)}} b^{(n+1)}=(b^{(n)}\cdot\nabla_{\ak^{(n)}}) v^{(n+1)}-b^{(n)}\dive_{\ak^{(n)}} v^{(n+1)} , &~~~\text{in } \Omega, \\
q^{(n+1)}=0,~b^{(n+1)}=\mathbf{0} &~~~\text{on } \Gamma, \\
(\eta^{(n+1)},v^{(n+1)}, b^{(n+1)},q^{(n+1)})|_{\{t=0\}}=(\text{Id},v_0, b_0,q_0).
\end{cases}
\end{equation}

Here $\ak^{(n)}:=(\p\ek^{(n)})^{-1}$ and the correction term $\psi^{(n)}$ is determined by \eqref{psi} with $\eta=\eta^{(n)}, v=v^{(n)}, \ak=\ak^{(n)}$ in that equation. What we need to verify are
\begin{enumerate}
\item System \eqref{linearn} has a (unique) solution $(\eta^{(n+1)},v^{(n+1)}, b^{(n+1)},q^{(n+1)})$ (in a suitable function space).

\item The solution of \eqref{linearn} satifies an energy estimate uniformly in $n$.

\item The approximate solutions $\{(\eta^{(n)},v^{(n)}, b^{(n)},q^{(n)})\}_{n=0}^{\infty}$ converge strongly.
\end{enumerate}

We denote $(\eta^{(n)},v^{(n)},b^{(n)},q^{(n)})$ by $(\er,\vr,\br,\qr)$, and $(\eta^{(n+1)},v^{(n+1)},b^{(n+1)},q^{(n+1)})$ by $(\eta,v,b,q)$ for the simplicity of notations. Then the linearized system \eqref{linearn} becomes 

\begin{equation}\label{linearr}
\begin{cases}
\p_t \eta=v+\psir &~~~\text{in } \Omega, \\
\rho_0\Jr^{-1}\p_t v=(\br\cdot\park) b-\park Q,~~Q=q+\frac{1}{2}|b|^2 &~~~\text{in } \Omega, \\
\frac{\Jrk R'(\qr)}{\rho_0}\p_tq+\divr v=0 &~~~\text{in } \Omega, \\
\p_t b-\lapark b=(\br\cdot\park) v-\br\divr v, &~~~\text{in } \Omega, \\
q=0,~b=\mathbf{0} &~~~\text{on } \Gamma, \\
(\eta,v,b,q)|_{\{t=0\}}=(\text{Id},v_0, b_0,q_0).
\end{cases}
\end{equation}

\subsection{A priori estimates of the linearized approximation system}

We first prove the a priori estimate of the linearized system \eqref{linearn} (or equivalently \eqref{linearr}) because such a priori bound helps us to choose a suitable function space when proving the existence of the linearized system by fixed-point argument.

Define the energy functional for $(\eta^{(n+1)},v^{(n+1)},b^{(n+1)},q^{(n+1)})$ by
\begin{equation}\label{EEn}
\EE^{(n+1)}(T):=E^{(n+1)}(T)+H^{(n+1)}(T)+W^{(n+1)}(T)+\sum_{k=0}^4\left\|\p_t^{4-k}\left( (b^{(n)}\cdot\nabla_{\ak^{(n)}})b^{(n+1)}\right)\right\|_k^2,
\end{equation}where
\begin{align}
\label{En} E^{(n+1)}(T):=&\left\|\eta^{(n+1)}\right\|_4^2+\sum_{k=0}^4\left\|\p_t^{4-k}v^{(n+1)}\right\|_k^2+\sum_{k=0}^4\left\|\p_t^{4-k}b^{(n+1)}\right\|_k^2+\sum_{k=0}^4\left\|\p_t^{4-k}q^{(n+1)}\right\|_k^2\\
\label{Hn} H^{(n+1)}(T):=&\int_0^T\io\left|\p_t^5 b^{(n+1)}\right|^2\dy\dt+\left\|\p_t^4 b^{(n+1)}\right\|_1^2\\
\label{Wn} W^{(n+1)}(T):=&\sum_{k=0}^4\left\|\nabla_{\ak^{(n)}}\p_t^{4-k}q^{(n+1)}\right\|_k^2+\left\|\p_t^{5}q^{(n+1)}\right\|_0^2.
\end{align}

The conclusion is
\begin{prop}\label{EEnn}
Suppose $(\eta^{(n+1)},v^{(n+1)},b^{(n+1)},q^{(n+1)})$ satisfies \eqref{linearn}, then there exists $T_\kk>0$ sufficiently small, independent of $n$. such that 
\begin{equation}\label{linear1}
\sup_{0\leq t\leq T_\kk} \EE^{(n+1)}(t)\leq \PP_0.
\end{equation}
\end{prop}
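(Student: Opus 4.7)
The plan is to mimic the nonlinear a priori analysis of Section \ref{nonlinear}, exploiting the linearity of \eqref{linearr} in the $(n+1)$-th iterate together with an inductive hypothesis on the $n$-th iterate. Specifically, I would carry out a strong induction on $n$: assume that
\[
\sup_{0\leq t\leq T_\kk}\EE^{(k)}(t)\leq M_\kk\PP_0\quad\text{for all }0\leq k\leq n,
\]
for some constant $M_\kk\geq 1$ to be chosen, and then upgrade this bound to $k=n+1$ by choosing $T_\kk=T_\kk(M_\kk,\kk,\PP_0)$ small. Because $\kk>0$ is now fixed, the tangential smoothing lets us freely exchange one tangential derivative of $\er$ for a factor $\kk^{-1}$ via $|\TP\lkk f|_0\lesssim \kk^{-1}|f|_0$; this is the key extra flexibility not available in Section \ref{nonlinear}. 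Under the induction hypothesis, all ``given'' coefficients $\ark,\Jrk,\br,\qr,\psir$ and their relevant space-time derivatives are bounded in terms of $P(M_\kk\PP_0,\kk^{-1})$, exactly in the manner of Lemma \ref{etapsi}.

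The actual estimates proceed in the same order as Section \ref{nonlinear}. First, using $b=\mathbf{0}$ on $\Gamma$, I would apply Lemma \ref{GLL} with metric $\ark$ to the heat equation $\p_t b-\lapark b=(\br\cdot\park)v-\br\divr v$ to control $\|\p_t^{4-k}b\|_k$ for $k\leq 2$, exactly as in Section \ref{nonb1}, and the same elliptic reduction applied to $(\br\cdot\park)b$ controls the ``Lorentz-like'' term $(\br\cdot\park)b$ that appears in the momentum equation. The higher time-derivative parts $\p_t^3 b$ and $\p_t^4 b$ are obtained by $L^2$ energy estimates for the third and fourth time-differentiated heat equation, giving the $H^{(n+1)}$ part of the energy. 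Second, for the velocity and pressure I would use the Hodge decomposition Lemma \ref{hodge} as in Section \ref{nonvp}: curl is controlled by time-differentiating $\curla$ applied to the momentum equation; the normal boundary trace is reduced to tangential derivatives plus divergence; the divergence is reduced to one extra time derivative of $q$ via the continuity equation; and $\|\park\p_t^{4-k} q\|_{k-1}$ is reduced through the momentum equation to $\|\p_t^{5-k}v\|_{k-1}$. Tangential estimates with at least one $\p_t$ are direct; for $\tpl$ spatial estimates I would introduce the Alinhac good unknowns $\VV:=\tpl v-\tpl\erk\cdot\park v$ and $\QQ:=\tpl Q-\tpl\erk\cdot\park Q$ based on $\ark$, use the correction term $\psir$ to cancel the higher-order boundary contributions as in \eqref{I01}--\eqref{I0414}, and extract the Rayleigh–Taylor boundary energy from the Taylor sign assumption on $\qr$ (which by Morrey holds on $[0,T_\kk]$ for $T_\kk$ small). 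Third, the control of the extra factor $\|\park q\|_4$ appearing in the energy norm (absent from Section \ref{nonvp}) is obtained by
\[
\|\park q\|_4\lesssim P(\|\erk\|_4)\bigl(\|\lapark q\|_3+\|\TP\erk\|_4\|q\|_4\bigr),
\]
where $\|\TP\erk\|_4\lesssim \kk^{-1}\|\eta^{(n)}\|_4$ by the smoothing bound \eqref{lkk11}; this is the step where we pay the $\kk^{-1}$ factor, and it is the one place the linearized estimate genuinely differs from the nonlinear one. Finally, the common control of $H^{(n+1)}$ and $W^{(n+1)}$ is obtained, as in Section \ref{noncommon}, by the 4-th time-differentiated heat equation for $b^{(n+1)}$ and wave equation for $q^{(n+1)}$, using the heat equation to replace $\lapark b^{(n+1)}$ on the right of the wave equation by first-order quantities so that no derivative is lost.

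Collecting the resulting inequalities, one arrives at an estimate of the form
\[
\EE^{(n+1)}(T)\leq C\PP_0+T\, P\bigl(M_\kk\PP_0,\kk^{-1}\bigr)\cdot\bigl(1+\EE^{(n+1)}(T)\bigr),
\]
after absorbing all $\epsilon$-small quantities into the left side by choosing $\epsilon$ suitably. Choosing first $M_\kk=2C$ and then $T_\kk$ so small that $T_\kk\, P(M_\kk\PP_0,\kk^{-1})\leq M_\kk\PP_0/2$ closes the induction and yields \eqref{linear1}. The main obstacle is the tangential-derivative estimate when $\dd^4=\TP^4$: one must combine the Alinhac good-unknown reformulation with the correction term $\psir$ so that the $\kk$-dependent higher-order boundary terms cancel just as in the nonlinear case, while simultaneously tracking how every bound depends on $\kk^{-1}$ through the $\TP\erk$ factor so that, for fixed $\kk$, time smallness alone suffices to close the iteration. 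A secondary difficulty is verifying at each induction step that the Taylor sign condition and the smallness \eqref{small1} continue to hold; this follows by writing $\ark(t)-\mathrm{Id}=\int_0^t\p_t\ark$ and using $\EE^{(n)}\leq M_\kk\PP_0$ together with the smallness of $T_\kk$.
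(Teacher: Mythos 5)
Your overall scaffolding — induction on $n$, Lemma \ref{coeff}-type estimates of the frozen coefficients, elliptic reduction for $b$ and the Lorentz-like term via Lemma \ref{GLL}, div-curl for $v$ with boundary traces reduced to tangential derivatives plus divergence, the $\kk^{-1}$-cost bound $\|\TP\erk\|_4\lesssim\kk^{-1}\|\er\|_4$ for $\|\park q\|_4$, and common control of the $\p_t^4$-differentiated heat and wave equations — matches the paper closely and is fine.

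The gap is in your treatment of the $\tpl$ tangential estimate (the case $\dd^4=\TP^4$). You propose to reproduce the nonlinear argument \eqref{I01}--\eqref{I0414}: expand $\VVr$ via $\tpl v^{(n+1)}=\tpl\p_t\eta^{(n+1)}-\tpl\psi^{(n)}$, extract a Taylor-sign boundary energy $\left|\ark^{(n)3\alpha}\tpl\lkk\eta^{(n+1)}_\alpha\right|_0^2$, and then cancel the dangerous leftover terms against the ones coming from $\psi^{(n)}$. But this cancellation fails at the linearized level. The correction $\psi^{(n)}$ is built from $\eta^{(n)},v^{(n)},\ark^{(n)}$, so its $\TP^2\mathbb{P}_{\neq 0}(\TL\eta^{(n)}\ark^{(n)i}\TP_i\lkk^2 v^{(n)}-\TL\ek^{(n)}\ark^{(n)i}\TP_i v^{(n)})$ contribution involves $v^{(n)}$ and $\eta^{(n)}$, while the analogue of $I_{03}$ (coming from $-\tpl\erk^{(n)}\cdot\park v^{(n+1)}$ inside $\VVr$) and of $I_{02}$ (coming from $\p_t\ark^{(n)}$, which also involves $v^{(n)}$, hitting $\tpl\lkk\eta^{(n+1)}$) involve $v^{(n+1)}$ and $\eta^{(n+1)}$. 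The exact cancellations \eqref{I042}$=-$\eqref{I030} and \eqref{I0411}$=-I_{02}$ rely on identical iterates on both sides, so in the linearization the corresponding pairs do not cancel, and trying to keep the Taylor-sign boundary energy forces you to control these mismatched top-order pieces separately anyway. Consistent with this, the paper's linearized functional $E^{(n+1)}$ in \eqref{En} \emph{omits} the boundary term $\left|\ak^{3\alpha}\TP^4\lkk\eta_\alpha\right|_0^2$ that appeared in $E_\kk$. The paper's actual argument is simpler: it does not invoke the Taylor sign condition at all in this step; instead it integrates $\TP^{1/2}$ by parts in $LI_0$ and uses the mollifier bound $|\tpl\erk|_{1/2}\lesssim\kk^{-1}|\er|_{7/2}$ together with $|\VVr|_{-1/2}$, paying a harmless (for fixed $\kk$) factor $\kk^{-1}$, which is the same trick you already allowed yourself elsewhere. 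You should replace the cancellation step by this direct bound.
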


\begin{rmk}
Compared with $\EE_\kk$ in \eqref{EEkk}, we find that there are extra terms in $W^{(n+1)}(T)$. We note that these extra terms are not needed in the uniform-in-$n$ a priori estimates bacause the elliptic estimates of $\p_t q$ helps us reduce $\|\p_t^{4-k}q\|_{k+1}$ to the $L^2$-norm of $\p_t^5 q$ and $\park\p_t^4 q$, and $\|\park q\|_4$ is not needed. \textbf{However, these terms are needed when we verify the fixed-point argument in the construction of the solution to the linearized system \eqref{linearr}:} The $H^4$-norm of $v$ has to be controlled by $$v(T)=v_0+\int_0^T\left\|\p_t v(t)\right\|_{4}\dt,$$ and thus the $H^4$-norm of $\park Q$ is definitely needed.
\end{rmk}

\subsubsection{Estimates of the frozen coefficients}

We prove Proposition \ref{EEnn} by induction on $n$. When $n=-1,0$, it auotmatically holds for the trivial solution. Suppose the energy bound \eqref{linear1} holds for all $\EE^{(k)}$ with $1\leq k\leq n$. Then we have the following estimates for $\ar,\er,\Jr$.

\begin{lem}\label{coeff}
Let $T\in(0,T_\kk)$. Then there exists some $\epsilon\in(0,1)$ sufficiently small and constant $C>1$ such that
\begin{align}
\label{psir} \psir&\in L_t^{\infty}([0,T];H^4(\Omega)),~~\p_t^l\psir\in  L_t^{\infty}([0,T];H^{5-l}(\Omega)),~~\forall 1\leq l\leq 4; \\
\label{Ida} &\|\Jr-1\|_3+\|\Jrk-1\|_3+\|\text{Id}-\ark\|_3+\|\text{Id}-\ar\|_3\leq\epsilon ;\\
\label{etar} \p_t\er&\in L^{\infty}([0,T];H^4(\Omega)),~~\p_t^{l+1}\er\in L^{\infty}([0,T];H^{5-l}(\Omega)),~~\forall 1\leq l\leq 4; \\
\label{Jr} \Jr,\p_t\Jr&\in L_t^{\infty}([0,T];H^3(\Omega)),~~\p_t^{1+l}\Jr\in L_t^{\infty}([0,T];H^{4-l}(\Omega)),~~\forall 1\leq l\leq 4; \\
\label{weightr} 1/C\leq&\frac{\Jrk R'(\qr)}{\rho_0},\rho_0 \Jrk^{-1}\leq C,~ \p_t^{l}\left(\frac{\Jrk R'(\qr)}{\rho_0},\rho_0 \Jrk^{-1}\right)\in L^{\infty}([0,T];H^{5-l}(\Omega)),~~\forall 1\leq l\leq 5.
\end{align}
\end{lem}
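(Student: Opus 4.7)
The plan is to proceed by induction on $n$, exploiting the hypothesis that the energy bound $\EE^{(n)}(t) \leq \PP_0$ has already been established for all lower indices. Since $\EE^{(n)}$, as defined in \eqref{EEn}, contains $\|\eta^{(n)}\|_4^2$ together with $\|\p_t^{4-k}v^{(n)}\|_k^2$, $\|\p_t^{4-k}b^{(n)}\|_k^2$ and $\|\p_t^{4-k}q^{(n)}\|_k^2$ for $0\leq k\leq 4$, the induction hypothesis immediately furnishes uniform-in-$t$ control on $(\er,\vr,\br,\qr)$ and all their time derivatives up to the appropriate regularity. Lemma \ref{tgsmooth} (in particular \eqref{lkk11}) then transfers these bounds to the mollified versions $\ek^{(n)}$, $\ark$ and $\Jrk$ without loss of tangential regularity, since $\ek^{(n)}=\lkk^2\eta^{(n)}$ and $\Jrk,\ark$ are rational expressions in $\p\ek^{(n)}$ that fit into Sobolev algebra.

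For the bound \eqref{psir} on the correction term $\psir=\psi^{(n)}$, I would apply verbatim the arguments of Lemma \ref{etapsi} to the elliptic problem \eqref{psi} with $(\eta,v,\ak)$ replaced by $(\er,\vr,\ark)$. The proof of Lemma \ref{etapsi} relies only on standard harmonic extension estimates, Sobolev trace theory, Bernstein's inequality on $\T^2$, the duality $(\dot H^{0.5})^*=\dot H^{-0.5}$, and Sobolev norms of $(\eta,v,\ak)$; none of this uses any equation that the iterates satisfy. Combining these elliptic bounds with the induction hypothesis yields $\psir\in L^\infty_tH^4$ and $\p_t^l\psir\in L^\infty_tH^{5-l}$ for $1\leq l\leq 4$. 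The bound \eqref{etar} then follows from the flow-map relation $\p_t\er=\vr+\psi^{(n-1)}$ (used at the previous level of the induction), and \eqref{Jr} is a polynomial consequence of \eqref{etar} since $\Jr=\det[\p\er]$ and $\p_t\Jr=\Jr\,\ar^{\mu\alpha}\p_\mu\p_t\er_\alpha$.

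The smallness statement \eqref{Ida} is the only place where shrinking $T_\kk$ is invoked. Since $\Jr(0)=\Jrk(0)=1$ and $\ar(0)=\ark(0)=\mathrm{Id}$, I would write
\[
\Jrk(T)-1=\int_0^T\p_t\Jrk\,\dt,\qquad \ark(T)-\mathrm{Id}=-\int_0^T\ark:\p\p_t\ek^{(n)}:\ark\,\dt,
\]
and similarly for the unsmoothed versions. The $H^3$ norms of the integrands are controlled by $P(\|\er\|_4,\|\p_t\er\|_4)\lesssim P(\PP_0^{1/2})$ via Sobolev algebra and \eqref{etar}, so the left-hand sides are bounded by $T\cdot P(\PP_0^{1/2})$. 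Choosing $T_\kk$ so that $T_\kk\cdot P(\PP_0^{1/2})\leq\epsilon$ gives \eqref{Ida}. Crucially, this choice depends only on the initial data, not on $n$, so a single $T_\kk$ works for every iteration.

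Finally, \eqref{weightr} follows from \eqref{Ida} together with the strict positivity $\rho_0\geq\underline{c}>0$ and the physical constraint \eqref{weight} on $\rho'(p)$: $\Jrk$ is uniformly close to $1$, so its reciprocal is uniformly bounded between two positive constants, and $R'(\qr)$ inherits pointwise positivity from \eqref{weight}; the time-derivative bounds come from the chain rule together with the already-established controls on $\p_t^l\Jrk$ and $\p_t^l\qr$. I do not expect any deep obstacle: the lemma is essentially a bookkeeping statement that collects all the coefficient estimates needed to make sense of the linear energy estimates for \eqref{linearr}. The mildly delicate point is organizing the time-derivative counts so that the regularities in \eqref{psir}--\eqref{weightr} line up consistently with the Sobolev exponents in $\EE^{(n)}$, but this is a direct consequence of the $k+(4-k)=4$ structure built into \eqref{En}.
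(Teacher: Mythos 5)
Your proposal is correct and matches the paper's own (very terse) proof: induction on $n$ via the energy bound from Proposition~\ref{EEnn}, re-running Lemma~\ref{etapsi} for $\psir$, using the flow-map relation and the multilinearity of $\Jr,\ar$ in $\p\er$ for \eqref{etar}--\eqref{Jr}, the fundamental theorem of calculus plus smallness of $T_\kk$ for \eqref{Ida}, and the physical constraints on $\rho_0$ and $R'$ for \eqref{weightr}. The paper's write-up is more compressed but identical in substance.
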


\begin{proof}
\eqref{psir} follows in the same way as Lemma \ref{etapsi}. $\Jr=\det[\p\er]$ and $\ar=[\p\er]^{-1}$ prove \eqref{etar} and \eqref{Jr} because the elements are multilinear functions of $\p\er$. The smallness of $\Jk-1$ and Id$-\ar$ follows from $\Jr=\det[\p\er]$ and
\[
\text{Id}-\ar=-\int_0^T\p_t \ar=\int_0^T\ar:(\p(\vr+\psi^{(n-1)})):\ar\dt
\] and choosing $\epsilon$ (depending on $T_\kk$) sufficiently small. \eqref{weightr} can be similarly proven.
\end{proof}

\subsubsection{Control of $\EE^{(n+1)}$}\label{energyl}

The control of $\EE^{(n+1)}$ follows nearly in the same way as the nonlinear functional $\EE_\kk(T)$ except the extra term $\|\park q\|_4$ and boundary integral in the tangential estimates.

\bigskip

\noindent\textbf{Step 1: Estimates of magnetic field and Lorentz force}

Since $b=\mathbf{0}$ on the boundary and $\divr b=0$ in $\Omega$, we are able to directly mimic the proof in Section \ref{nonb} to get analogues of \eqref{b4}, \eqref{bt3btt2}, \eqref{bttt1} and \eqref{b402}:
\begin{equation}\label{b4l}
\sum_{k=0}^4\left\|\p_t^{4-k} b(T)\right\|_k^2\lesssim\PP_0+P(E^{(n+1)}(T))\int_0^TP(E^{(n+1)}(t))\dt+\epsilon H^{(n+1)}(T)
\end{equation} and an analogue of \eqref{bb4}
\begin{equation}\label{bb4l}
\sum_{k=0}^4\left\|\p_t^{4-k}((\br\cdot\park)b)\right\|_k^2\lesssim \left\|b\right\|_2^2\left\|\park \p_t^4 b\right\|_0^2+ P(E^{(n+1)}(T))+\PP_0+P(E^{(n+1)}(T))\int_0^TP(E^{(n+1)}(t))\dt.
\end{equation}

\bigskip

\noindent\textbf{Step 2: Div-Curl estimates of $v$}

By \eqref{Ida}, we know the div-curl estimates follow in the same way as Section \ref{bdryv}-\ref{vpreduce}. For $1\leq k\leq 4$, we have

\begin{align}
\label{curlvl} \sum_{k=1}^4\frac12\io\rho_0\Jrk^{-1}\left|\curlr\p_t^{4-k} v(t)\right|^2\dy\bigg|^T_0\lesssim&\epsilon T\sup_{0\leq t\leq T}\left\|\p_t^4((\br\cdot\park)b)\right\|_k^2+\int_0^T P(E^{(n+1)}(t))\dt.\\
\label{divvl} \sum_{k=1}^4\left\|\divr \p_t^{4-k} v\right\|_{k-1}\lesssim&\epsilon\sum_{k=1}^4\left\|\p_t^{4-k}v\right\|_{k}+\sum_{k=1}^4\left\|\p_t^{5-k} q\right\|_{k-1}+L.O.T. \\
\label{bdryvl} \sum_{k=1}^4\left|\p_t^{4-k}v^3\right|_{k-1/2}\lesssim&\left\|\TP^{k}\p_t^{4-k} v\right\|_0+\|\dive \p_t^{4-k}v\|_{k-1}.\\
\label{vpreducel} \sum_{k=1}^4\left\|\p_t^{4-k}q\right\|_{k}\lesssim&\sum_{k=1}^4\left\|\p_t^{5-k}v\right\|_{k-1}+\PP_0+\int_0^TP(E^{(n+1)}(t))\dt+L.O.T.
\end{align}

\bigskip

\noindent\textbf{Step 3: Space-Time tangential estimates}

Let $\dd=\TP$ or $\p_t$. When $\dd^4$ contains at least one time derivative, we are able to directly ocmmute $\ark$ with $\dd^4$ because $\p_t\er$ has the same regularity as $\er$, see Lemma \ref{coeff}. Since the boundary condition of \eqref{linearr} is the same as \eqref{nonlinearkk}, we are able to mimic the proof of the nonlinear functional. The result is

\begin{equation}\label{tgvpl}
\begin{aligned}
&\sum_{k=0}^3\left\|\TP^k\p_t^{4-k} v\right\|_k^2+\left\|\TP^k\p_t^{4-k} q\right\|_{k}^2\\
\lesssim&\epsilon \left(\sum_{k=0}^3\left\|\p_t^{4-k} v\right\|_k^2+\left\|\p_t^{4-k} q\right\|_{k}^2\right)+\PP_0+ P(E^{(n+1)}(T))\int_0^T P(E^{(n+1)}(t))\dt\\
&+\epsilon\sum_{k=0}^4\int_0^T\left\|\TP^{k}\p_t^{4-k}\left((b\cdot\pak)b\right)\right\|_0^2+\left\|\TP^{k}\p_t^{4-k}\p_t q\right\|_0^2\dt
\end{aligned}
\end{equation}

\noindent\textbf{Step 4: Tangential spatial derivative estimates}

This part contains a non-trivial boundary integral. In the nonlinear estimates, that boundary term together with Taylor sign condition gives the boundary part of nonlinear functional $E_\kk(T)$. However, here we no longer need Taylor sign condition. Instead, we can sacrifise $1/\kk$ to directly control the boundary integral by using the mollifier property, because the derivative loss is only tangential. 

Similarly as in Section \ref{tgspace}, we rewrite the equation in terms of Alinhac good unknonws. Define the Alinhac good unknowns of $v,Q$ in \eqref{linearr} by 
\[
\VVr:=\tpl v-\tpl\erk\cdot\park v,~~\QQr:=\tpl Q-\tpl\erk\cdot\park Q.
\] Then we take $\tpl$ in the second equation of \eqref{linearr}
\begin{equation}\label{goodeql}
\rho_0\Jrk^{-1}\p_t\VVr+\park\QQr=\FFr:=\tpl((\br\cdot\park)b)+[\rho_0\Jrk^{-1},\tpl]\p_t v+\rho_0\Jrk^{-1}\p_t(\tpl\erk\park v)+\mathring{C}(Q),
\end{equation}subjected to
\begin{equation}\label{QQbdryl}
\QQr=-\tpl\erk_{\beta}\ark^{3\beta}\p_3 Q~~on~\Gamma,
\end{equation} and 
\begin{equation}
\park\cdot\VVr=\tpl(\divr v)-\mathring{C}^{\alpha}(v_\alpha)~~in~\Omega.
\end{equation}

Multiplying $\Jrk\VVr$ and take space-time integral, we have
\begin{equation}
\begin{aligned}
&\frac12\io\rho_0\left|\p_t\VVr(t)\right|^2\dy\bigg|^T_0\\
=&-\int_0^T\io\Jrk\park\QQr\cdot\VVr\dy\dt+\int_0^T\io\FFr\cdot\VVr\dy\dt\\
=&\int_0^T\Jrk\p_3 Q\tpl\erk_{\beta}\ark^{3\beta}\ark^{3\alpha}\VVr_{\alpha}\dS\dt+\int_0^T\Jrk\QQr\tpl\left(\divr v\right)\dy\dt-\int_0^T\io Q\mathring{C}(v)\dy\dt\\
=&:LI_0+LI_1+LJ_1.
\end{aligned}
\end{equation}

Mimicing the estimates \eqref{IJ2}-\eqref{I121}, we are able to control $LI_1$ as
\begin{equation}\label{LI1}
\begin{aligned}
LI_1\lesssim&-\frac{1}{2}\io\frac{\Jrk R'(\qr)}{\rho_0}\left|\tpl q\right|^2\dy\bigg|^T_0+\epsilon\int_0^T\left\|\tpl\p_t q\right\|_0^2\dt\\
&+\PP_0+\int_0^T P(E^{(n+1)}(t))\dt.
\end{aligned}
\end{equation}

For the boundary integral $LI_0$, we integral $\TP^{1/2}$ by parts to get
\begin{align*}
LI_0=&\int_0^T\Jrk\p_3 Q\tpl\erk_{\beta}\ark^{3\beta}\ark^{3\alpha}\VVr_{\alpha}\dS\dt\\
=&\int_0^T\TP^{1/2}\left(\Jrk\p_3 Q\tpl\erk_{\beta}\ark^{3\beta}\ark^{3\alpha}\right)\TP^{-1/2}\VVr_{\alpha}\dS\dt\\
\lesssim&\int_0^T\left(\left|\p_3 Q\right|_{L^{\infty}}\left|\Jrk\ark\right|_{L^{\infty}}^2\left|\tpl\erk\right|_{1/2}+\left|\p_3 Q\Jrk\ark^{3\beta}\ark^{3\alpha}\right|_{W^{\frac12 ,4}}\left|\tpl\erk_\beta\right|_{L^4}\right)\left|\VVr\right|_{-1/2}\dt.
\end{align*}

By the mollifier property $|\tpl\erk|_{1/2}\lesssim\kk^{-1}|\er|_{7/2}$ and $H^{1/2}(\T^2)\hookrightarrow L^4(\T^2)$, we are able to control $LI_0$ by 
\begin{equation}\label{LI0}
LI_0\lesssim\frac{1}{\kk} P\left(\| Q\|_3, \|v\|_4,\|\er\|_4\right).
\end{equation} This together with \eqref{LI0} gives the tangential spatial estimates

\begin{equation}\label{tgl}
\begin{aligned}
&\frac{1}{2}\io\rho_0\left|\TP^4 v\right|_0^2\dy+\frac{1}{2}\io\frac{\Jrk R'(\qr)}{\rho_0}\left|\TP^4 q\right|_0^2\dy\\
\lesssim&\PP_0+\int_0^T P(E^{(n+1)}(t))\dt+\epsilon\int_0^T\left\|\tpl\left((\br\cdot\park)b\right)\right\|_0^2+\left\|\tpl\p_t q\right\|_0^2\dt
\end{aligned}
\end{equation}

\bigskip

\noindent\textbf{Step 5: Elliptic estimates of $q$}

The control of $\|\p_t^{5-k} q\|_{k}$ is the same as Section \ref{qe} so we omit the proof. However, we still need to control $\|\park q\|_4$. By Lemma \ref{GLL}, we have
\begin{equation}\label{ql5}
\|\park q\|_4\lesssim P(\|\erk\|_4)(\|\lapark q\|_3+\|\TP\erk\|_{4}\|q\|_4)\lesssim P(\|\erk\|_4)\|\lapark q\|_3+\frac{1}{\kk} P(E^{(n+1)}(T)).
\end{equation}

Taking $\divr$ in the second equation of \eqref{linearr}, we get the wave equation of $q$
\begin{equation}\label{waveql0}
\begin{aligned}
&\frac{\Jrk R'(\qr)}{\rho_0}\p_t^2 q-\lapark q\\
=&b\cdot\lapark b+R\p_t\ark^{\mu\alpha}\p_{\mu}v_{\alpha}-\left[\divr,(\br\cdot\park)\right]b+|\park b|^2\\
&+\frac{\Jrk R'(\qr)}{\rho_0}\left((\park Q-(\br\cdot\park)b)\cdot\park q\right)+\left(\Jrk\frac{R'(\qr)}{\rho_0}-\Jrk R''(q){\rho_0}\right)(\p_t q)^2\\
=:&b\cdot\lapark b+w_{00}.
\end{aligned}
\end{equation}

So $\|\lapark q\|_3$ can be reduced to $\|b\cdot\lapark b\|_3+\|w_{00}\|_3$. Then $\|\lapark b\|_3$ can again be reduced to the terms with no more than 4 derivatives by the heat equation
\begin{equation}\label{heatbl0}
\p_t b=\lapark b=(\br\cdot\park)v-\br\divr v.
\end{equation} Therefore we are able to reduce $\|\park q\|_4$ to the finished estimates by sacrifising a $1/\kk$ with the help of mollifier.

Combining \eqref{ql5} with the analogue of \eqref{qte}-\eqref{qttte} (replacing $\ak$ by $\ark$), we get
\begin{equation}\label{qle}
\left\|\park q\right\|_4+\sum_{k=2}^4\left\|\p_t^{5-k} q\right\|_{k-1}\lesssim \left(1+\frac{1}{\kk}\right) \left(P(E^{(n+1)}(T))+\left\|\park\p_t^4 q\right\|_0+\left\|\p_t^5q\right\|_0\right)
\end{equation}

\bigskip

\noindent\textbf{Step 6: Common control of higher order heat and wave equation}

We differentiate $\p_t^4$ in \eqref{waveql0} and \eqref{heatbl0} to get

\begin{equation}\label{heatbl4}
\begin{aligned}
\p_t^5b-\lapark\p_t^4 b=&\p_t^4\left((\br\cdot\park)v-\br\divr v\right)+[\p_t^4,\lapark] b\\
=&(\br\cdot\park)\p_t^4v+b\frac{\Jrk R'(q)}{\rho_0}\p_t^5 q+\left[\p_t^4,\lapark\right] b+\left[\p_t^4,\br\cdot\park\right]v+\left[\p_t^4,b\frac{\Jrk R'(q)}{\rho_0}\right]\p_t q\\
=&:h_{55}
\end{aligned}
\end{equation}
 and
\begin{align*}
\frac{\Jrk R'(\qr)}{\rho_0}\p_t^6q-\lapark\p_t^4q=b\cdot\lapark\p_t^4 b+\p_t^4 w_0+\left[b\cdot\lapark,\p_t^4\right]+\left[\p_t^4,\lapark\right]q+\left[\frac{\Jrk R'(\qr)}{\rho_0},\p_t^4\right]\p_t^2 q.
\end{align*}
Then plug the heat equation \eqref{heatbl4} $\lapark b=\p_t^5 b-h_{55}$ to get
\begin{equation}\label{waveql4}
\begin{aligned}
\frac{\Jrk R'(q)}{\rho_0}\p_t^6q-\lapark\p_t^4q=&b\cdot\left(\p_t^5 b-h_{55}\right)+\p_t^4 w_0+\left[b\cdot\lapark,\p_t^4\right]+\left[\p_t^4,\lapark\right]q+\left[\frac{\Jrk R'(q)}{\rho_0},\p_t^4\right]\p_t^2 q\\
=:&w_{55}
\end{aligned}
\end{equation}

Similarly as in Section \ref{qb4}, we are able to get a common control of the energy functional of these 2 equations. Define
\[
\w{W}^{(n+1)}:=\left\|\p_t^5 q\right\|_0^2+\left\|\park\p_t^4 q\right\|_0^2,
\] then we have the analogue of \eqref{sumHW}
\begin{equation}\label{sumHWl}
\begin{aligned}
&\left(H^{(n+1)}(T)+\w{W}^{(n+1)}(T)\right)-\left(H^{(n+1)}(0)+\w{W}^{(n+1)}(0)\right)\\
\lesssim&\epsilon \left(H^{(n+1)}(T)+\w{W}^{(n+1)}(T)\right)+\int_0^T H^{(n+1)}(t)+\w{W}^{(n+1)}(t)+P(E^{(n+1)}(t))\dt
\end{aligned}
\end{equation}

\bigskip

\noindent\textbf{Step 7: Finalizing the a priori estimates}

Summing up \eqref{b4l}, \eqref{bb4l}, \eqref{curlvl}, \eqref{divvl}, \eqref{bdryvl}, \eqref{vpreducel}, \eqref{tgvpl}, \eqref{tgl}, \eqref{qle} and \eqref{sumHWl}, we get 
\[
\EE^{(n+1)(T)}-\EE^{(n+1)(0)}\lesssim_{1/\kk} \epsilon\EE^{(n+1)(T)}+P(E^{(n+1)}(T))+\int_0^T P(\EE^{(n+1)}(t))\dt.
\] By the Gronwall inequality, we are able to find some $T_\kk>0$ independent of $n$, such that 
\[
\sup_{0\leq t\leq T_\kk} \EE^{(n+1)}(t)\leq 2\EE^{(n+1)}(0)\lesssim \PP_0.
\] This finalizes the proof of Proposition \ref{EEnn}.

\subsection{Construction of the solution to the linearized approximation system}

This part provides a fixed-point argument of constructing the solution to the linearized system \eqref{linearr}
\[
\begin{cases}
\p_t \eta=v+\psir &~~~\text{in } \Omega, \\
\rho_0\Jrk^{-1}\p_t v=(\br\cdot\park) b-\park Q,~~Q=q+\frac{1}{2}|b|^2 &~~~\text{in } \Omega, \\
\frac{\Jrk R'(\qr)}{\rho_0}\p_tq+\divr v=0 &~~~\text{in } \Omega, \\
\p_t b-\lapark b=(\br\cdot\park) v-\br\divr v, &~~~\text{in } \Omega, \\
q=0,~b=\mathbf{0} &~~~\text{on } \Gamma, \\
(\eta,v,b,q)|_{\{t=0\}}=(\text{Id},v_0, b_0,q_0).
\end{cases}
\]

Define the norm $\|\cdot\|_{\XX^r}$ by
\[
\left\|f\right\|_{\XX^r}^2:=\sum_{m=0}^r\sum_{k+l=m}\left\|\p_t^k\p^lf\right\|_0^2
\] and a Banach space on $[0,T]\times\Omega$
\[
\XX(M,T):=\left\{\left(\xi,w,h,\pi\right)\bigg|\left(\xi,w,h,\pi\right)\bigg|_{t=0}=\left(\text{Id},v_0,b_0,q_0\right),\left\|\left(\xi,w,h,\pi\right)\right\|_{\XX}\leq M\right\}
\]where
\[
\left\|\left(\xi,w,h,\pi\right)\right\|_{\XX}^2:=\left\|\left(\xi,\p_t\xi,w,h,\park h,\pi,\p_t\pi,\park\pi\right)\right\|_{L_t^{\infty}\XX^4}^2+\|\p_t^5 h\|_{L_t^2L_x^2}^2
\]

Next we define the solution map
\begin{align*}
\Xi: \XX(M,T)&\to\XX(M,T)\\
\left(\xi,w,h,\pi\right)&\mapsto (\eta,v,b,q)
\end{align*} as follows:
\begin{enumerate}
\item Define $\eta$ by $\p_t\eta=w+\psir$ with $\eta(0)=$Id
\item Define $v$ by $\rho_0\Jrk^{-1}\p_t v:=(\br\cdot\park)h-\park(\pi+\frac{1}{2}|h|^2).$ with $v(0)=v_0$
\item Define $b,q$ by the coupled system of heat equation and wave equation
\end{enumerate}
\begin{equation}
\begin{cases}
\p_t b-\lapark b=(\br\cdot\park)v-\br\divr v\\
b|_{\Gamma}=\mathbf{0}\\
b(0)=b_0
\end{cases}
\end{equation}
and
\begin{equation}
\begin{cases}
R'(\qr)\p_t^2 q-\lapark q=&\lapark\left(\frac12|b|^2\right)-\left[\divr,(\br\cdot\park)\right]b\\
&+\rho_0\Jrk^{-1}\p_t\ark^{\mu\alpha}\p_\mu v_{\alpha}+\ark^{\mu\alpha}\p_\mu(\rho_0\Jrk^{-1})\p_t v_{\alpha}-\Jrk^{-1}\p_t\left(\Jrk R'(\qr)\right)\p_t q\\
q|_{\Gamma}=0,&\\
(q(0),\p_tq(0))=(q_0,q_1).&
\end{cases}
\end{equation}

We need to verify the following things to prove the existence and uniqueness of the system \eqref{linearr}.
\begin{enumerate}
\item The image of $\XX(M,T)$ under $\Xi$ still lies in $\XX(M,T)$.
\item $\Xi$ is a contraction on $\XX(M,T)$.
\end{enumerate}

We first prove $\Xi$ is a self-mapping of $\XX(M,T)$. The velocity is directly controlled by $$\rho_0\Jrk^{-1}\p_t v:=(\br\cdot\park)h-\park(\pi+\frac{1}{2}|h|^2).$$
\begin{equation}
\begin{aligned}
\|\p_t^{4-k} v(T)\|_k^2\lesssim&\|\p_t^{4-k} v(0)\|_0^2+\int_0^T\left\|\p_t^{4-k}\left((\br\cdot\park)h-\park(\pi+\frac{1}{2}|h|^2)\right)\right\|_k^2 \\
\lesssim&\|\p_t^{4-k} v(0)\|_0^2+\int_0^T\left\|\park h\right\|_{\XX^4}^2+\left\|\park \pi\right\|_{\XX^4}^2\dt
\end{aligned}
\end{equation}And thus the bound for $\|\p_t\eta\|_{\XX^4}$ and $\|\eta\|_{\XX^4}$ directly follows.

Next we control $\|b\|_{\XX^4}$ by elliptic estimates as in Section \ref{nonb}. For example
\[
\|b\|_4\approx\|\park b\|_3\lesssim P(\|\erk\|_3)\left(\|\lapak b\|_2+\|\TP\erk\|_3\|b\|_3\right)
\]. Then invoking $\lapark b=\p_t b-(\br\cdot\park)v+\br\divr v$ to get
\[
\|b\|_4\lesssim P(\|\erk\|_3)\left(\|(\br\cdot\park)v\|_2+\|\br\divr v\|_2+\|\TP\erk\|_3\|b\|_3\right)\lesssim P(\|\erk\|_3)\left((\|b\|_{2}+\|\br\|_2)\|v\|_3+\|\TP\erk\|_3\|b\|_3\right).
\] Combining the estimates of $v$ above, we are able to write
\[
\|b\|_4\lesssim P(\|\erk\|_3)\|\TP\erk\|_3\|b\|_3+\|v(0)\|_{\XX^3}+\int_0^T\left\|\park h\right\|_{\XX^3}+\left\|\park \pi\right\|_{\XX^3}\dt
\] Then one can repeat the same steps for $\|b\|_3$ to get
\begin{equation}
\|b\|_4\lesssim \PP_0+ P(\|\er\|_3)\int_0^T\left\|\park h\right\|_{\XX^3}+\left\|\park \pi\right\|_{\XX^3}\dt
\end{equation} 
Similar estimates hold for $\|\p_t^{4-k} b\|_{k}$ for $1\leq k\leq 4$, while $\|\p_t^4 b\|_0^2$ is again reduced to $\int_0^T\|\p_t^5 b\|_0^2\dt$ as before.

One can mimic the proof above to estimate the space-time derivative of $\park b$ or $\p_t b$. One exception is $\|\park b\|_4$, for which we have to use the mollifier property.
\[
\|\park b\|_4\lesssim P(\|\er\|_4)\left(\|\lapark b\|_3+\frac{1}{\kk}\|\er\|_3\|b\|_4\right).
\] Again, invoking the heat equation and the $\XX^4$ estimates of $v$, we get
\[
\|\park b\|_4\lesssim \PP_0+ P(\|\er\|_4)\int_0^T\left\|\park h\right\|_{\XX^4}+\left\|\park \pi\right\|_{\XX^4}\dt
\]
Similar estimates holds for the space-time derivatives except $\|\p_t^5b\|_{L_t^2L_x^2}$ and $\|\park\p_t^4 b\|_0$.
\begin{equation}
\sum_{k=1}^{4}\left\|\park\p_t^{4-k} b\right\|_k^2\lesssim\PP_0+ P(\|\er\|_4)\int_0^TP\left(\left\|\park h\right\|_{\XX^4},\left\|\park \pi\right\|_{\XX^4}\right)\dt.
\end{equation}

Analogously, we can apply the elliptic estimates and wave equation to $q$ in order to reduce the estimates to the full time derivatives. For example
\[
\|q\|_4\approx\|\park q\|_3\lesssim P(\|\erk\|_3)\left(\|\lapark q\|_2+\|\TP\erk\|_3\|q\|_3\right)
\]
Invoking the wave equation and heat equation $$\lapak q=\p_t^2 q-\lapak(1/2|b|^2)+\cdots=\p_t^2 q-\p_t b-(\br\cdot\park)v+\br\divr v+\cdots,$$ we are able to reduce $\|\lapark q\|_2$ to $\|\p_t^2 q\|_2$ plus the terms with $\leq 3$ derivatives. Repeat the steps above, we are able to reduce $\|q\|_{\XX^4}$ to $\|\p_t^4 q\|_0$ and $\|\p_t^3q\|_1$. Similarly,
\[
\|\park q\|_4\lesssim P(\|\erk\|_4)\left(\|\lapark q\|_3+\kk^{-1}\|\er\|_4\|q\|_4\right)
\] Therefore, the control of $\|\park q\|_{\XX^4}$ and $\|\p_t q\|_{\XX^4}$ are reduced to $\|\p_t^5q\|_{0}$ and $\|\park\p_t^4 q\|_{0}$.

The final step is to seek for a common control of 4-th order time-differetiated heat and wave equations. The proof is the same as in Section \ref{qb4} and step 6 in Section \ref{energyl}. The only thing we would like to remark here is that there are terms like $\p_t^5 v$ and $\p\p_t^4 v$ appearing in the time integral of the source term. In this case, we can invoke the equation of $v$ to eliminate one time derivative and reduce to the $\XX^4$ norm of $\pak\pi$ and $(\br\cdot\park)h$.  
\begin{equation}\label{qbqb}
\begin{aligned}
&\int_0^T\io\left|\p_t^5 b\right|^2\dy\dt+\left(\left\|\park \p_t^4 b\right\|_0^2+\left\|\p_t^5 q\right\|_0^2+\left\|\park \p_t^4 q\right\|_0^2\right)\bigg|^T_0\\
\lesssim&\epsilon \int_0^T\io\left|\p_t^5 b\right|^2\dy\dt+\int_0^T P\left(\left\|\p_t^5 b\right\|_{L_t^2L_x^2},\left\|\left(v,\park b,b,\p_t q,q,\park q\right)\right\|_{\XX^4}\right)\dt\\
&+\PP_0+\int_0^T P\left(\left\|\park h\right\|_{\XX^4},\left\|\park \pi\right\|_{\XX^4},\left\|\p_t\pi\right\|_{\XX^4}\right)\dt
\end{aligned}
\end{equation}By choosing $\epsilon>0$ sufficiently small, we can absorb the $\epsilon$-term to LHS. 

Summarizing these steps above, we find that, there exists some $T_\kk>0$ sufficiently small and $M$ chosen suitably large, such that
\begin{equation}
\left\|\left(\eta,\p_t\eta,b,\park b,q,\p_tq,\park q\right)\right\|_{\XX^4}<\infty.
\end{equation}

Next we prove $\Xi$ is a contraction. Pick any $\left(\xi_i,w_i,h_i,\pi_i\right)\mapsto\left(\eta_i,v_i,b_i,q_i\right)$ and define $[f]:=f_1-f_2$. Then by the linearity of the equations above, we know $\left([\eta],[v],[b],[q]\right)$ satisfies the same equation with $\left(\xi,w,h,\pi\right)$ replaced by $\left([\xi],[w],[h],[\pi]\right)$ and zero initial data. Thus  $\left([\eta],[v],[b],[q]\right)$ satisfies
\[
\left\|\left([\eta],[v],[b],[q]\right)\right\|_{\XX}\lesssim_{\kk^{-1}}\int_0^T P\left(\left\|\left([\xi],[w],[h],[\pi]\right)\right\|_{\XX}\right)\dt.
\] Choosing a suitably small $T_{\kk}>0$ such that
\[
\left\|\left([\eta],[v],[b],[q]\right)\right\|_{\XX}\leq \frac{1}{2} \left\|\left([\xi],[w],[h],[\pi]\right)\right\|_{\XX},
\] we know $\Xi$ is indeed a contraction. By Contraction Mapping Theorem, $\Xi$ has a unique fixed point $\left(\eta,v,b,q\right)$, and thus the local existence and uniqueness of the solution to the linearized equation \eqref{linearr} is established.
 
\subsection{Iteration to the nonlinear approximation system}

For each $n$, we have already established the local existence and uniqueness of solution $(\eta^{(n+1)},v^{(n+1)},b^{(n+1)},q^{(n+1)})$ to the $n$-th linearized approximation system
\[
\begin{cases}
\p_t \eta^{(n+1)}=v^{(n+1)}+\psi^{(n)} &~~~\text{in } \Omega, \\
\frac{\rho_0}{\Jk^{(n)}}\p_t v^{(n+1)}=(b^{(n)}\cdot\nabla_{a^{(n)}}) b^{(n+1)}-\nabla_{\ak^{(n)}} Q^{(n+1)},~~Q^{(n+1)}=q^{(n+1)}+\frac{1}{2}|b^{(n+1)}|^2 &~~~\text{in } \Omega, \\
\frac{\Jk^{(n)} R'(q^{(n)})}{\rho_0}\p_tq^{(n+1)}+\dive_{\ak^{(n)}} v^{(n+1)}=0 &~~~\text{in } \Omega, \\
\p_t b^{(n+1)}-\Delta_{\ak^{(n)}} b^{(n+1)}=(b^{(n)}\cdot\nabla_{a^{(n)}}) v^{(n+1)}-b^{(n)}\dive_{\ak^{(n)}} v^{(n+1)} , &~~~\text{in } \Omega, \\
q^{(n+1)}=0,~b^{(n+1)}=\mathbf{0} &~~~\text{on } \Gamma, \\
(\eta^{(n+1)},v^{(n+1)}, b^{(n+1)},q^{(n+1)})|_{\{t=0\}}=(\text{Id},v_0, b_0,q_0).
\end{cases}
\] This part shows the Picard-type iteration of the sequence $\{(\eta^{(n)},v^{(n)},b^{(n)},q^{(n)})\}_{n\in\N}$ which gives a subsequential limit $(\eta,v,b,q)$ converging in $H^3$-norm. Such limit $(\eta,v,b,q)$ exactly solves the nonlinear $\kk$-approximation problem \eqref{nonlinearkk}.

Define $[\eta]^{(n)}:=\eta^{(n+1)}-\eta^{(n)},~[v]^{(n)}:=v^{(n+1)}-v^{(n)},~[b]^{(n)}:=b^{(n+1)}-b^{(n)},~[q]^{(n)}:=q^{(n+1)}-q^{(n)},$ and $[a]^{(n)}:=a^{(n)}-a^{(n-1)},~[A]^{(n)}:=A^{(n)}-A^{(n-1)},~[\psi]^{(n)}:=\psi^{(n)}-\psi^{(n-1)}$. Then these quantities satisfy the following system consisting of:

The equation of momentum
\begin{equation}\label{diffvn}
\begin{aligned}
\rho_0\p_t [v]^{(n)}=&\left(b^{(n)}\cdot\nabla_{\Ak^{(n)}}\right)\left[b\right]^{(n)}-\nabla_{\Ak^{(n)}}[Q]^{(n)}\\
&+b^{(n)}\cdot\nabla_{[\Ak]^{(n)}}b^{(n)}+[b]^{(n-1)}\cdot\nabla_{\Ark^{(n-1)}}b^{(n)}-\nabla_{\Ak^{(n)}}Q^{(n)}.
\end{aligned}
\end{equation}

Continuity equation:
\begin{equation}\label{diffqn}
r^{(n)}\p_t [q]^{(n)}+\dive_{\ak^{(n)}}[v]^{(n)}=-\dive_{[\ak]^{(n)}}v^{(n)}+[r]^{(n)}\p_t q^{(n)},
\end{equation}here $r^{(n)}:=\Jrk^{(n)}R'(q^{(n)})/\rho_0$.

Equation of magnetic field:
\begin{equation}\label{diffbn}
\begin{aligned}
\p_t [b]^{(n)}-\Delta_{\ak^{(n)}}[b]^{(n)}=&(b^{(n)}\cdot\nabla_{\ak^{(n)}})[v]^{(n)}-b^{(n)}\dive_{\ak^{(n)}}[v]^{(n)}\\
&+b^{(n)}\cdot\nabla_{[\ak]^{(n)}}v^{(n)}-b^{(n)}\dive_{[\ak]^{(n)}}v^{(n)}\\
&+[b]^{(n-1)}\cdot\nabla_{\ark^{(n-1)}}v^{(n)}-[b]^{(n-1)}\dive_{\ark^{(n-1)}}v^{(n)}\\
&+\dive_{\ak^{(n)}}\left(\nabla_{[\ak]^{(n)}}b^{(n)}\right)+\dive_{[\ak]^{(n)}}\left(\nabla_{\ark^{(n-1)}}b^{(n)}\right).
\end{aligned}
\end{equation}


The initial data of $([\eta],[v],[b],[q])=(\mathbf{0},\mathbf{0},\mathbf{0},0)$. The boundary conditions are
\begin{equation}\label{diffbdry}
[b]^{(n)}=\mathbf{0},[q]^{(n)}=0.
\end{equation}

Define the energy functional
\begin{equation}\label{diffEEn}
[\EE]^{(n)}(T):=[E]^{(n)}(T)+[H]^{(n)}(T)+[W]^{(n)}(T)+\sum_{k=0}^3\left\|\p_t^{3-k}\nabla_{\ak^{(n)}}[b]^{(n)}\right\|_k^2,
\end{equation}where
\begin{align}
[E]^{(n)}(T):=&\sum_{k=0}^3\left(\left\|\p_t^{3-k}[v]^{(n)}\right\|_k^2+\left\|\p_t^{3-k}[b]^{(n)}\right\|_k^2+\left\|\p_t^{3-k}[q]^{(n)}\right\|_k^2\right),\\
[H]^{(n)}(T):=&\int_0^T\left\|\p_t^4 [b]^{n}\right\|_0^2\dt+\left\|\p_t^3\nabla_{\ak^{(n)}}[b]^{(n)}\right\|_0^2,\\
[W]^{(n)}(T):=&\left\|\p_t^4[q]^{(n)}\right\|_0^2+\left\|\p_t^3\nabla_{\ak^{(n)}}[q]^{(n)}\right\|_0^2.
\end{align}

The conclusion is 
\begin{prop}\label{diffconv}
For $n$ sufficiently large and $T_{\kk}>0$ suitably small, we have that $\forall T\in[0,T_{\kk}]$
\[
[E]^{(n)}(T)\leq\frac{1}{4}\left([E]^{(n-1)}(T)+[E]^{(n-2)}(T)\right).
\]
\end{prop}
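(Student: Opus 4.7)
The plan is to mimic the a priori energy estimates carried out for the linearized system in Section~\ref{linear} (i.e., the proof of Proposition~\ref{EEnn}), but now applied to the difference system \eqref{diffvn}--\eqref{diffbdry}. The crucial structural observation is that every source term on the right-hand side of that system is a product of a quantity already uniformly bounded (by Proposition~\ref{EEnn}, applied to the iterates $(\eta^{(m)},v^{(m)},b^{(m)},q^{(m)})$ for $m=n-1,n$) and a \emph{difference} of the form $[a]^{(n)}$, $[b]^{(n-1)}$, $[\ak]^{(n)}$, or $[\psi]^{(n-1)}$. These differences do not involve the top-order index $n$ but only $n-1$ and $n-2$, which will ultimately yield the recurrence in the statement.

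First I would set up the energy $[\EE]^{(n)}$ exactly as in \eqref{diffEEn}. Since $[b]^{(n)}$ and $[q]^{(n)}$ vanish on $\Gamma$, I can use the Christodoulou--Lindblad elliptic estimate (Lemma~\ref{GLL}) to reduce all norms of $\pak^{(n)}[b]^{(n)}$ and $\pak^{(n)}[q]^{(n)}$ to the $L^2$-norms of their Laplacians, and then substitute the heat equation \eqref{diffbn} and the wave equation derived from \eqref{diffvn}--\eqref{diffqn} to trade spatial derivatives for time derivatives, as was done in Section~\ref{qe}. The div-curl decomposition (Lemma~\ref{hodge}) applied to $[v]^{(n)}$ and its time derivatives reduces the estimate of $[v]^{(n)}$ to three pieces: a curl estimate obtained by applying $\mathrm{curl}_{\ak^{(n)}}$ to \eqref{diffvn} (which kills the $\pak^{(n)}[Q]^{(n)}$ term), a divergence estimate coming from \eqref{diffqn}, and a boundary estimate which, because $[q]^{(n)}|_{\Gamma}=0$, reduces to interior tangential control as in \eqref{vbdry}.

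The tangential spatial estimates are the most delicate step. I would introduce the Alinhac good unknowns $\VV^{\flat}:=\tpl [v]^{(n)}-\tpl\ek^{(n)}\cdot\pak^{(n)}[v]^{(n)}$ and $\QQ^{\flat}$ analogously, and repeat the energy identity \eqref{tg00}. The boundary integral $I_0$ now has no useful Taylor-sign companion because $[q]^{(n)}|_{\Gamma}=0$ forces $\QQ^{\flat}|_{\Gamma}\equiv 0$, so the boundary term actually vanishes; the remaining interior identity produces the energy $\frac{1}{2}\int_\Omega r^{(n)}|\tpl[q]^{(n)}|^2\,\mathrm{d}y$ plus a commutator whose $\kk^{-1}$-dependent bound is absorbed by the smallness of $T_{\kk}$. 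Finally the common control of heat and wave functionals $[H]^{(n)}+[W]^{(n)}$ follows from the $\p_t^{k}$-differentiated versions of \eqref{diffbn} and the difference wave equation, exactly as in Section~\ref{qb4}.

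The main obstacle, and the reason both $[E]^{(n-1)}$ and $[E]^{(n-2)}$ appear on the right, is the accounting for the coefficient differences. By the Piola-type identity, $[\ak]^{(n)}=\ak^{(n)}:\p\lkk^2[\eta]^{(n-1)}:\ak^{(n-1)}$, so these are controlled by $[\eta]^{(n-1)}=\int_0^t\bigl([v]^{(n-1)}+[\psi]^{(n-1)}\bigr)\,\mathrm{d}s$, which brings a factor of $T$ and the level-$(n-1)$ energy. However, the correction-term difference $[\psi]^{(n-1)}$ solves a Laplace problem whose boundary data, by inspection of \eqref{psi}, depends on $\eta^{(n-1)},\eta^{(n-2)}, v^{(n-1)},v^{(n-2)}$ and hence on $[\eta]^{(n-2)}$ and $[v]^{(n-2)}$; estimating it as in Lemma~\ref{etapsi} produces exactly the $[E]^{(n-2)}$ contribution. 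Collecting all estimates yields a bound of the schematic form
\[
[\EE]^{(n)}(T)\leq C_{\kk}\,T\,P\bigl(\PP_0\bigr)\bigl([\EE]^{(n-1)}(T)+[\EE]^{(n-2)}(T)\bigr)+C_{\kk}\int_0^T [\EE]^{(n)}(t)\,\mathrm{d}t,
\]
after which Gronwall and choosing $T_{\kk}$ so small that $C_{\kk}T_{\kk}P(\PP_0)e^{C_{\kk}T_{\kk}}\leq 1/4$ closes the contraction. The only subtlety worth double-checking is that the top-order piece $\|\pak^{(n)}\p_t^4[b]^{(n)}\|_0^2$ appearing through the Lorentz-force commutator in the contraction estimate is indeed absorbed by the common heat-wave control, which mirrors the absorption argument leading to \eqref{sumHW}.
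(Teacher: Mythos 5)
Your overall framework — run the linearized a priori estimate machinery (elliptic estimates via Lemma~\ref{GLL}, div-curl, Alinhac good unknowns, common heat/wave control) on the difference system, track the coefficient differences $[\ak]^{(n)},[\psi]^{(n-1)}$ through Lemma~\ref{etapsi}-type bounds to see where $[E]^{(n-1)}$ and $[E]^{(n-2)}$ enter, and close with Gronwall — matches the paper's proof in structure. Your identification of $[\psi]^{(n-1)}$ as the source of the two-step recurrence is also correct.

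However, there is a genuine error in the tangential estimate. You claim that since $[q]^{(n)}|_{\Gamma}=0$ (and hence $[Q]^{(n)}|_\Gamma=0$), the good unknown satisfies $\QQ^{\flat}|_{\Gamma}\equiv 0$ and the boundary integral vanishes. This is false. For any $f$ with $f|_\Gamma=0$, the Alinhac good unknown $\mathbf{f}=\tpl f-\tpl\ek\cdot\pak f$ restricted to $\Gamma$ equals $-\tpl\ek_\beta\,\ak^{3\beta}\,\p_3 f$, which is generically nonzero because it contains the \emph{normal} derivative of $f$. This is exactly the mechanism that makes the boundary integral nontrivial throughout the paper — it is what generates the Taylor-sign term in the nonlinear a priori estimates in Section~\ref{tgspace}. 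In the paper's difference estimate (Step 5 of Section 4.3), the boundary value is
\[
[\QQ]^{(n)}|_{\Gamma}=-\left(\TP^3\ek^{(n)}_{\beta}\ak^{(n)3\beta}\p_3[Q]^{(n)}+\TP^3[\ek]^{(n-1)}_{\beta}\ak^{(n)3\beta}\p_3Q^{(n)}+\TP^3\ek^{(n-1)}_{\beta}[\ak]^{(n)3\beta}\p_3Q^{(n)}\right),
\]
and the corresponding boundary integral does \emph{not} vanish; the paper integrates $\TP^{1/2}$ by parts and bounds it by $\left|[\VV]^{(n)}\right|_{\dot H^{-0.5}}\left(\kk^{-1}\PP_0\left|[\eta]^{(n-1)}\right|_{2.5}+\left\|[\ak]^{(n)}\right\|_2\right)$ using the mollifier property $\left|\tpl\ek\right|_{1/2}\lesssim\kk^{-1}\left|\eta\right|_{5/2}$, exactly as was done for $LI_0$ in the linearized a priori estimates (equation~\eqref{LI0}). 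Since $\kk$ is fixed here and the differences on the right all vanish at $t=0$, this bound is acceptable after Gronwall — but you do have to produce it; asserting the term is zero is a gap.

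Two minor remarks: the paper defines the difference good unknowns as $[\VV]^{(n)}:=\VV^{(n+1)}-\VV^{(n)}$ (differences of level-$n+1$ and level-$n$ good unknowns) rather than as the good unknown of the difference; the two constructions differ by terms of the form $\tpl[\ek]^{(n-1)}\cdot\park^{(n)}v^{(n)}$ and $\tpl\ek^{(n-1)}\cdot\nab_{[\ak]^{(n)}}v^{(n)}$, which are themselves controllable differences, so either choice works. Also, your final schematic inequality has a factor $T$ where the paper obtains $T^2$ (coming from two time integrations, one hidden in the estimate for $[\ak]^{(n)}$ and $[\eta]^{(n-1)}$), but this does not affect the conclusion.
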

\begin{flushright}
$\square$
\end{flushright}

By Proposition \ref{diffconv}, we know $[E]^{(n)}\leq\frac{1}{2^n} P_\kk(\PP_0)$, and thus yields the limit for each fixed $\kk>0$: $$\left(\eta^{(n)},v^{(n)},b^{(n)},q^{(n)}\right)\xrightarrow{\text{converge strongly}}(\eta(\kk),v(\kk),b(\kk),q(\kk))~~~as~n\to\infty.$$ Such limit exactly solves the nonlinear approximation system \ref{nonlinearkk}.

\begin{cor}\label{lwpkk}
The limit $(\eta(\kk),v(\kk),b(\kk),q(\kk))$ gotten in Proposition \ref{diffconv} is the unique strong solution to the nonlinear approximation system \eqref{nonlinearkk} and satisfies the energy estimates in $[0,T_\kk]$
\[
\sup_{0\leq T\leq T_\kk}\w{\EE}_\kk(T)\leq 2\left(\|v_0\|_4^2+\|b_0\|_5^2+\|q_0\|_4^2\right),
\]where
\begin{equation}\label{EEh}
\w{\EE}_\kk(T):=\w{E}_\kk(T)+\w{H}_\kk(T)+\w{W}_\kk(T)+\sum_{k=0}^4\left\|\p_t^{4-k}\left((b(\kk)\cdot\pak)b(\kk)\right)\right\|_k^2,
\end{equation} and
\begin{align}
\label{Eh} \w{E}_\kk(T):=&\left\|\eta\right\|_4^2+\sum_{k=0}^4\left\|\p_t^{4-k}v(\kk)\right\|_k^2+\sum_{k=0}^4\left\|\p_t^{4-k}b(\kk)\right\|_k^2+\sum_{k=0}^4\left\|\p_t^{4-k}q(\kk)\right\|_k^2\\
\label{Hh} \w{H}_\kk(T):=&\int_0^T\io\left|\p_t^5 b(\kk)\right|^2\dy\dt+\left\|\p_t^4 b(\kk)\right\|_1^2\\
\label{Wh} \w{W}_\kk(T):=&\sum_{k=0}^4\left\|\pak\p_t^{4-k}q(\kk)\right\|_k^2+\left\|\p_t^{5}q(\kk)\right\|_0^2.
\end{align}
\end{cor}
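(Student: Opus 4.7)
The corollary is essentially a bookkeeping consequence of the two previous propositions, and I organize the proof into four steps: extract a strong limit from the geometric decay in Proposition~\ref{diffconv}, pass to the limit in \eqref{linearn}, transfer the uniform-in-$n$ energy bound from Proposition~\ref{EEnn} to the limit via weak-$\ast$ lower semicontinuity, and close with uniqueness.

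For the strong convergence, iterating $[E]^{(n)}\le\tfrac14([E]^{(n-1)}+[E]^{(n-2)})$ gives $[E]^{(n)}(T)\le C\rho^n$ for some $\rho\in(0,1)$, so $\sum_n\|(\eta,v,b,q)^{(n+1)}-(\eta,v,b,q)^{(n)}\|_{L^\infty_t\XX^3}<\infty$; hence the Picard sequence has a strong limit $(\eta(\kk),v(\kk),b(\kk),q(\kk))$ in $L^\infty_t\XX^3$. This forces $\ek^{(n)}\to\ek(\kk)$, $\ak^{(n)}\to\ak(\kk)$ and $\Jk^{(n)}\to\Jk(\kk)$ strongly in $L^\infty_t H^3$, and $\psi^{(n)}\to\psi(\kk)$ in the norms of Lemma~\ref{etapsi} thanks to the explicit formula \eqref{psi}. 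Each equation in \eqref{linearn} is linear in the $(n{+}1)$-th iterate with coefficients built from prior iterates, so strong $\XX^3$ convergence is more than enough to pass to the distributional limit in every product on both sides; the initial and boundary conditions survive under trace continuity. Thus the limit solves \eqref{nonlinearkk}.

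For the energy estimate, Proposition~\ref{EEnn} gives $\sup_n\EE^{(n+1)}(T)\le C\PP_0$ on $[0,T_\kk]$, so Banach-Alaoglu yields weakly-$\ast$ convergent subsequences of each component appearing in $\w{\EE}_\kk$. By uniqueness of distributional limits these must coincide with the corresponding space-time derivatives of $(\eta(\kk),v(\kk),b(\kk),q(\kk))$, and weak-$\ast$ lower semicontinuity then gives
\[
\sup_{0\le T\le T_\kk}\w{\EE}_\kk(T)\le\liminf_{n\to\infty}\EE^{(n+1)}(T)\le 2\PP_0\lesssim \|v_0\|_4^2+\|b_0\|_5^2+\|q_0\|_4^2.
\]
A small nuisance is that $\EE^{(n+1)}$ uses $\ak^{(n)}$ while $\w{\EE}_\kk$ uses $\ak(\kk)$; the splitting $\nab_{\ak^{(n)}}f^{(n+1)}=\nab_{\ak(\kk)}f^{(n+1)}+\nab_{\ak^{(n)}-\ak(\kk)}f^{(n+1)}$ combined with strong coefficient convergence in $H^3$ absorbs this discrepancy.

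For uniqueness, two strong solutions $(\eta_i,v_i,b_i,q_i)$ of \eqref{nonlinearkk} with identical initial data produce differences that satisfy a system structurally identical to \eqref{diffvn}-\eqref{diffbdry} but without the index staggering; the same energy argument behind Proposition~\ref{diffconv} produces $[E](T)\le C_\kk\int_0^T[E](t)\,dt$, and Gr\"onwall forces $[E]\equiv 0$. The main obstacle is step three: one must check carefully that every ingredient of $\w{\EE}_\kk$---most delicately the Lorentz-force term $\|\p_t^{4-k}((b(\kk)\cdot\pak)b(\kk))\|_k^2$, the parabolic energy $\int_0^T\io|\p_t^5 b(\kk)|^2$, and the covariant derivatives $\|\pak\p_t^{4-k}q(\kk)\|_k^2$---is genuinely the lower-semicontinuous limit of the corresponding quantity in $\EE^{(n+1)}$; this is where the coefficient correction above is used and where the strong $\XX^3$-convergence from step one is indispensable.
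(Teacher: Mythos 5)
Your proposal is correct and follows the same route the paper takes (the paper itself supplies no written proof of the corollary, marking it with a closing box): geometric decay from Proposition \ref{diffconv} gives strong $\XX^3$-convergence of the Picard sequence, the limit solves \eqref{nonlinearkk} by passing to distributional limits in the linearized system, the uniform bound of Proposition \ref{EEnn} transfers via weak-$*$ lower semicontinuity once the weak-$*$ limits of the top-order covariant and time derivatives are identified with those of the strong limit, and uniqueness follows from a difference-energy estimate on $[0,T_\kk]$. The one imprecision worth noting---shared by the paper---is the step $2\PP_0\lesssim\|v_0\|_4^2+\|b_0\|_5^2+\|q_0\|_4^2$, which is not a literal inequality since $\PP_0$ is a generic polynomial of the initial norms; the precise consequence of the Gr\"onwall argument is $\sup_t\EE^{(n+1)}(t)\le 2\EE^{(n+1)}(0)$, and the right-hand side of the corollary is shorthand for the fixed quantity $\EE^{(n+1)}(0)$, whose time derivatives at $t=0$ are determined from the compatible initial data via the equations.
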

\begin{flushright}
$\square$
\end{flushright}

The proof process is nearly the same as in the a priori estimates part, so we do not write all details here but still state the main steps. 

\subsubsection*{Step 1: Correction Terms}

First we estimate the coefficients and correction terms.

 $[\psi]^{(n)}$ satisfies $-\Delta[\psi]^{(n)}=0$ with the boundary condition
\begin{align*}
[\psi]^{(n)}=\TL^{-1}\mathbb{P}_{\neq 0}\bigg(&\TL[\eta]^{(n-1)}_{\beta}\ak^{(n)i\beta}\TP_i\lkk^2v^{(n)}+\TP\eta^{(n-1)}_{\beta}[\ak]^{(n)i\beta}\TP_i\lkk^2v^{(n)}+\TP\eta^{(n-1)}_{\beta}\ak^{(n-1)i\beta}\TP_i\lkk^2[v]^{(n-1)} \\
&-\TL\lkk^2[\eta]^{(n-1)}_{\beta}\ak^{(n) i\beta}\TP_i v^{(n)}-\TL\lkk^2\eta^{(n-1)}_{\beta}[\ak]^{(n) i\beta}\TP_i v^{(n)}-\TL\lkk^2\eta^{(n-1)}_{\beta}\ak^{(n-1) i\beta}\TP_i [v]^{(n-1)}\bigg).
\end{align*}
By the standard elliptic estimates, we have the control for $[\psi]^{(n)}$
\begin{equation}\label{psig3}
\|[\psi]^{(n)}\|_3^2\lesssim |[\psi]^{(n)}|_{2.5}\lesssim \PP_0\left(\|[\eta]^{(n-1)}\|_3^2+\|[v]^{(n-1)}\|_2^2+\|[\ak]^{(n)}\|_1^2\right).
\end{equation}

On the other hand, we have
\begin{align*}
[a]^{(n)\mu\nu}(T)&=\int_0^T\p_t(a^{(n)\mu\nu}-a^{(n-1)\mu\nu})\dt\\
&=-\int_0^T [a]^{(n)\mu\gamma}\p_\beta\p_t\eta^{(n)}_{\gamma}a^{(n)\beta\nu}+a^{(n-1)\mu\gamma}\p_\beta\p_t[\eta]^{(n-1)}_{\gamma}a^{(n)\beta\nu}+a^{(n-1)\mu\gamma}\p_\beta\p_t\eta^{(n-1)}_{\gamma}[a]^{(n)\beta\nu},
\end{align*} which gives
\begin{equation}
\|[a]^{(n)}(T)\|_{2}\lesssim\PP_0\int_0^T\|[a]^{(n)}(t)\|_2^2(\|[v]^{(n-1)}\|_3+\|[\psi]^{(n-1)}\|_3))\dt.
\end{equation}

Therefore we get
\begin{equation}\label{ag2}
\sup_{[0,T]}\|[a]^{(n)}\|_2^2\lesssim \PP_0T^2\left(\|[a]^{(n)},[a]^{(n-1)}\|_{L_t^{\infty}H^2}+\|[v]^{(n-1)},[v]^{(n-2)},[\eta]^{(n-2)}\|_{L_t^{\infty}H^3}^2\right),
\end{equation} and the bound for $[\eta]$ via $\p_t [\eta]^{(n)}=[v]^{(n)}+[\psi]^{(n)}$:
\begin{equation}\label{etag3}
\sup_{[0,T]}\|[\eta]^{(n)}\|_3^2\lesssim\PP_0T^2\left(\|[a]^{(n)}\|_{L_t^{\infty}H^2}^2+\|[v]^{(n)},[v]^{(n-1)},[\eta]^{(n-1)}\|_{L_t^{\infty}H^3}^2\right)
\end{equation}

Similar as in Lemma \ref{etapsi}, one can get estimates for the time derivatives of $[\eta]$ and $[\psi]$
\begin{align}
\|[\p_t\psi]^{(n)}\|_3^2\lesssim&\PP_0\left(\|[a]^{(n)}\|_{2}^2+\|[\p_t v]^{(n-1)}\|_{2}^2+\|[v]^{(n-1)},[\eta]^{(n-1)}\|_{3}^2\right)\\
\|[\p_t^2\psi]^{(n)}\|_2^2\lesssim&\PP_0\left(\|[a]^{(n)}\|_{2}^2+\|[\p_t^2 v]^{(n-1)}\|_{1}^2+\|[\p_t v]^{(n-1)}\|_{2}^2+\|[v]^{(n-1)},[\eta]^{(n-1)}\|_{3}^2\right)\\
\|[\p_t^3\psi]^{(n)}\|_1^2\lesssim&\PP_0\left(\|[a]^{(n)}\|_{2}^2+\|[\p_t^2 v]^{(n-1)}\|_{1}^2+\|[\p_t^3 v]^{(n-1)}\|_{0}^2+\|[\p_t v]^{(n-1)}\|_{2}^2+\|[v]^{(n-1)},[\eta]^{(n-1)}\|_{3}^2\right)\\
\|[\p_t\eta]^{(n)}\|_3^2\lesssim&\PP_0T^2\left(\|[a]^{(n)},[\p_tv]^{(n)},[\p_tv]^{(n-1)}\|_{L_t^{\infty}H^2}^2+\|[v]^{(n)},[v]^{(n-1)},[\eta]^{(n-1)}\|_{L_t^{\infty}H^3}^2\right)\\
\|[\p_t^2\eta]^{(n)}\|_2^2\lesssim&\PP_0T^2\left(\|[\p_t^2 v]^{(n),(n-1)}\|_{L_t^{\infty}H^1}^2+\|[a]^{(n)},[\p_tv]^{(n),(n-1)}\|_{L_t^{\infty}H^2}+\|[v]^{(n),(n-1)},[\eta]^{(n-1)}\|_{L_t^{\infty}H^3}^2\right)\\
\|[\p_t^3\eta]^{(n)}\|_1^2\lesssim&\PP_0\left(\|[\p_t^2 v]^{(n),(n-1)}\|_{L_t^{\infty}H^1}^2+\|[a]^{(n)},[\p_tv]^{(n),(n-1)}\|_{L_t^{\infty}H^2}^2+\|[v]^{(n),(n-1)},[\eta]^{(n-1)}\|_{L_t^{\infty}H^3}^2\right).\\
\|[\p_t^4\eta]^{(n)}\|_0^2\lesssim&\PP_0\bigg(\|[\p_t^3 v]^{(n,n-1)}\|_{L_t^{\infty}L_x^2}^2+\|[\p_t^2 v]^{(n,n-1)}\|_{L_t^{\infty}H^1}^2+\|[a]^{(n)},[\p_tv]^{(n,n-1)}\|_{L_t^{\infty}H^2}^2\\&+\|[v]^{(n,n-1)},[\eta]^{(n-1)}\|_{L_t^{\infty}H^3}^2\bigg).
\end{align}

\subsubsection*{Step 2: Magnetic field and Lorentz force}

 The first step is still the elliptic estimates of $[b]^{(n)}$. We show an example of $\|[b]^{(n)}\|_3$:
\[
\|[b]^{(n)}\|_3\lesssim P(\|\erk^{(n)}\|_2)\left(\|\Delta_{\ak^{(n)}}[b]^{(n)}\|_1+P(\|\TP\erk\|_2)\|[b]^{(n)}\|_2\right).
\] One can still use the heat equation \eqref{diffbn} to eliminate the Laplacian terms, but now we have two more higher order terms when ``[$\cdot$]" falls on $\divr$ or $\ark$. Such terms can be controlled directly by $\EE^{(n-1,n,n+1)}$ and thus by $\PP_0$. In specific, such terms are $$\left\|\dive_{\ak^{(n)}}\left(\nabla_{[\ak]^{(n)}}b^{(n)}\right)\right\|_1+\left\|\dive_{[\ak]^{(n)}}\left(\nabla_{\ark^{(n-1)}}b^{(n)}\right)\right\|_1.$$ The leading order part in these two terms can be written as $[\ak]^{(n)}$ times the top order derivatives (4-th order) of $b^{(n)}$ which has been controlled uniformly in $n$ in Proposition \ref{EEnn}. For example,
\[
\left\|\dive_{\ak^{(n)}}\left(\nabla_{[\ak]^{(n)}}b^{(n)}\right)\right\|_1\lesssim \left\|[\ak]^{(n)}\right\|_2\left\| b^{(n)}\right\|_4\times\cdots\lesssim\PP_0\left\|[\ak]^{(n)}\right\|_2
\] Therefore, the control of $[b]^{(n)}$ can be controlled in the same manner as before. Similar estimates hold for $\p_t[b]^{(n)}$. The control of $\|\p_t^2[b]\|_1$ and $\|\p_t^3[b]\|_0$ is reduced to the estimates of heat equation \eqref{diffbn}. The proof is the same as Section \ref{nonb} so we omit it here. 

The Lorentz force is controlled in a silimar way. For example, 
\[
\left\|\nabla_{\ak^{(n)}}[b]^{(n)}\right\|_3\lesssim P\left(\|\erk^{(n)}\|_3\right)\left(\left\|\Delta_{\ak^{(n)}}[b]^{(n)}\right\|_2+\left\|\TP\erk^{(n)}\right\|_3\left\|[b]^{(n)}\right\|_3\right)
\] We again use the heat equation \eqref{diffbn} to eliminate the Laplacian term, and the extra terms can be controlled in the same way as above. (Note that $\|\park b\|_4$ is controlled in Proposition \ref{EEnn}). Therefore,
\[
\left\|\nabla_{\ak^{(n)}}[b]^{(n)}\right\|_3\lesssim \kk^{-1}\PP_0\left\|[\ak]^{(n)}\right\|_2.
\] Similar estimates hold for the time derivatives of Lorentz force.

\subsubsection*{Step 3: Div-Curl estimates}

The control of $[v]^{(n)}$ and $[q]^{(n)}$ also follows the same way as Section \ref{nonvp}. The equation of $\curl_{\ak^{(n)}} [v]^{(n)}$ is 
\begin{equation}
\begin{aligned}
\rho_0\p_t\curl_{\Ak^{(n)}}[v]^{(n)}=&\curl_{\Ak^{(n)}}\left((b^{(n)}\cdot\nabla_{\Ak^{(n)}})[b]^{(n)}\right)+\left[\rho_0\p_t,\curl_{\Ak^{(n)}}\right][v]^{(n)}\\
&+\curl_{\Ak^{(n)}}\left([b]^{(n-1)}\cdot\nabla_{\Ark^{(n-1)}} b^{(n)}+b^{(n-1)}\cdot\nabla_{[\Ak]^{(n)}}b^{(n)}-\nabla_{[\Ak]^{(n)}}Q^{(n)}\right)
\end{aligned}
\end{equation}
The first two terms in the second line is controlled in the same way as before(just consider $\curl_{\Ak^{(n)}}$ as the covariant derivative $\nabla_{\Ak^{(n)}}$. Also $$\left\|\curl_{\Ak^{(n)}}\left(\nabla_{[\Ak]^{(n)}}Q^{(n)}\right)\right\|_2\lesssim \|[a]^{(n)}\|_{2}\|Q^{(n)}\|_4\PP_0\lesssim\|[a]^{(n)}\|_{2} \PP_0.$$ Therefore,
\[
\|\curl [v]^{(n)}\|_2^2\lesssim\epsilon\|[v]^{(n)}\|_3^2+P_{\kk}(\PP_0)T^2\sup_{[0,T]}[\EE]^{(n),(n-1)}(t) .
\] And similarly
\[
\|\curl [\p_t v]^{(n)}\|_1^2\lesssim\epsilon\|[\p_tv]^{(n)}\|_2^2+P_{\kk}(\PP_0)T^2\sup_{[0,T]}[\EE]^{(n),(n-1)}(t), 
\]
\[
\|\curl [\p_t^2 v]^{(n)}\|_0^2\lesssim\epsilon\|[\p_t^2v]^{(n)}\|_1^2+P_{\kk}(\PP_0)T^2\sup_{[0,T]}[\EE]^{(n),(n-1)}(t) .
\]

Invoking the divergence equation \eqref{diffqn}, we are still able to reduce that control to $\p_t^3 v$ and $\p_t^3 q$.

\subsubsection*{Step 4: Space-time tangential estimates}

Let $\dd^3=\TP^2\p_t,\TP\p_t^2,\p_t^3$. Using the same method as in Section \ref{tgtime}, we can derive the estimates
\begin{equation}\label{tgtimen}
\sum_{k=1}^3\left\|\TP^{3-k}\p_t^k [v]^{(n)}\right\|_{0}^2+\left\|\TP^{3-k}\p_t^k [q]^{(n)}\right\|_0^2\lesssim\int_0^T P([\EE]^{(n),(n-1),(n-2)}(t))\dt.
\end{equation}

\subsubsection*{Step 5: Spatial tangential estimates}

We adopt the same method as in Section \ref{tgspace}. For each $n$, we define the Alinhac good unknowns by 
\begin{equation}
\VV^{(n+1)}=\TP^3v^{(n+1)}-\TP^3\ek^{(n)}\cdot\nabla_{\ak^{(n)}}v^{(n+1)},~~\QQ^{(n+1)}=\TP^3Q^{(n+1)}-\TP^3\ek^{(n)}\cdot\nabla_{\ak^{(n)}}Q^{(n+1)}.
\end{equation} Their difference is denoted by 
\begin{align*}
[\VV]^{(n)}:=\VV^{(n+1)}-\VV^{(n)}, [\QQ]^{(n)}:=\QQ^{(n+1)}-\QQ^{(n)}.
\end{align*}

Similarly as in Section \ref{tgspace}, we can derive the analogue of \eqref{goodeq} as
\begin{equation}\label{goodnn}
\rho_0\p_t[\VV]^{(n)}+\nabla_{\ak^{(n)}}[\QQ]^{(n)}=-\nabla_{[\ak]^{(n)}}\QQ^{(n)}+\FF^{(n)},
\end{equation}subject to the boundary condition
\begin{equation}\label{bdrynn}
[\QQ]^{(n)}|_{\Gamma}=-\left(\TP^3\ek^{(n)}_{\beta}\ak^{(n)3\beta}\p_3[Q]^{(n)}+\TP^3[\ek]^{(n-1)}_{\beta}\ak^{(n)3\beta}\p_3Q^{(n)}+\TP^3\ek^{(n-1)}_{\beta}[\ak]^{(n)3\beta}\p_3Q^{(n)}\right),
\end{equation}and 
\begin{equation}\label{divnn}
\nabla_{\ak^{(n)}}\cdot[\VV]^{(n)}=-\nabla_{[\ak]^{(n)}}\cdot\VV^{(n)}+\GG^{(n)},
\end{equation}where
\begin{align*}
\FF^{(n)\alpha}=&[\rho_0,\TP^3]\p_t[v]^{(n)\alpha}+\TP^3\left((b^{(n)}\cdot\nabla_{\ak^{(n)}})[b]^{(n)}+(b^{(n)}\cdot\nabla_{[\ak]^{(n)}})b^{(n)}+([b]^{(n-1)}\cdot\nabla_{\ak^{(n-1)}})b^{(n)}\right)\\
&+\rho_0\p_t\left(\TP^3[\ek]^{(n-1)}_{\beta}\ak^{(n)\mu\beta}\p_{\mu}v^{(n+1)}_{\alpha}+\TP^3\ek^{(n-1)}_{\beta}[\ak]^{(n)\mu\beta}\p_{\mu}v^{(n+1)}_{\alpha}+\TP^3\ek^{(n-1)}_{\beta}\ak^{(n)\mu\beta}\p_{\mu}[v]^{(n)}_{\alpha}\right)\\
&+[\ak]^{(n)\mu\beta}\p_{\mu}(\ak^{(n)\gamma\alpha}\p_{\gamma}Q^{(n+1)})\TP^3\ek^{(n)}_{\beta}+\ak^{(n-1)\mu\beta}\p_{\mu}([\ak]^{(n)\gamma\alpha}\p_{\gamma}Q^{(n+1)})\TP^3\ek^{(n)}_{\beta} \\
&+\ak^{(n-1)\mu\beta}\p_{\mu}(\ak^{(n-1)\gamma\alpha}\p_{\gamma}[Q]^{(n)})\TP^3\ek^{(n)}_{\beta}+\ak^{(n-1)\mu\beta}\p_{\mu}([\ak]^{(n)\gamma\alpha}\p_{\gamma}h^{(n)})\TP^3[\ek]^{(n-1)}_{\beta}\\
&-\left[\TP^2,[\ak]^{(n)\mu\beta}\ak^{(n)\gamma\alpha}\TP\right]\p_{\gamma}\ek^{(n)}_{\beta}\p_{\mu}Q^{(n+1)}-\left[\TP^2,\ak^{(n-1)\mu\beta}[\ak]^{(n)\gamma\alpha}\TP\right]\p_{\gamma}\ek^{(n)}_{\beta}\p_{\mu}Q^{(n+1)}\\
&-\left[\TP^2,\ak^{(n-1)\mu\beta}\ak^{(n-1)\gamma\alpha}\TP\right]\p_{\gamma}[\ek]^{(n-1)}_{\beta}\p_{\mu}Q^{(n+1)}-\left[\TP^2,\ak^{(n-1)\mu\beta}\ak^{(n-1)\gamma\alpha}\TP\right]\p_{\gamma}\ek^{(n-1)}_{\beta}\p_{\mu}[Q]^{(n)}\\
&-\left[\TP^3,[\ak]^{(n)\mu\alpha},\p_{\mu}Q^{(n+1)}\right]-\left[\TP^3,\ak^{(n-1)\mu\alpha},\p_{\mu}[Q]^{(n)}\right]
\end{align*}
and 
\begin{align*}
\GG^{(n)}&=\TP^3(\dive_{\ak^{(n)}}[v]^{(n)}-\dive_{[\ak]^{(n)}}v^{(n)}) \\
&~~~~-\left[\TP^2,[\ak]^{(n)\mu\beta}\ak^{(n)\gamma\alpha}\TP\right]\p_{\gamma}\ek^{(n)}_{\beta}\p_{\mu}v^{(n+1)}_{\alpha}-\left[\TP^2,\ak^{(n-1)\mu\beta}[\ak]^{(n)\gamma\alpha}\TP\right]\p_{\gamma}\ek^{(n)}_{\beta}\p_{\mu}v^{(n+1)}_{\alpha} \\
&~~~~-\left[\TP^2,\ak^{(n-1)\mu\beta}\ak^{(n-1)\gamma\alpha}\TP\right]\p_{\gamma}[\ek]^{(n-1)}_{\beta}\p_{\mu}v^{(n+1)}_{\alpha} -\left[\TP^2,\ak^{(n-1)\mu\beta}\ak^{(n)\gamma\alpha}\TP\right]\p_{\gamma}\ek^{(n-1)}_{\beta}\p_{\mu}[v]^{(n)}_{\alpha} \\
&~~~~-\left[\TP^3,[\ak]^{(n)\mu\alpha},\p_{\mu}v^{(n+1)}_{\alpha}\right]-\left[\TP^3,\ak^{(n-1)\mu\alpha},\p_{\mu}[v]^{(n)}_{\alpha}\right] \\
&~~~~+[\ak]^{(n)\mu\beta}\p_{\mu}(\ak^{(n)\gamma\alpha}\p_{\gamma}v^{(n+1)}_{\alpha})\TP^3\ek^{(n)}_{\beta}+\ak^{(n-1)\mu\beta}\p_{\mu}([\ak]^{(n)\gamma\alpha}\p_{\gamma}v^{(n+1)}_{\alpha})\TP^3\ek^{(n)}_{\beta} \\
&~~~~+\ak^{(n-1)\mu\beta}\p_{\mu}(\ak^{(n-1)\gamma\alpha}\p_{\gamma}[v]^{(n)}_{\alpha})\TP^3\ek^{(n)}_{\beta}+\ak^{(n-1)\mu\beta}\p_{\mu}([\ak]^{(n)\gamma\alpha}\p_{\gamma}v^{(n)}_{\alpha})\TP^3[\ek]^{(n-1)}_{\beta}.
\end{align*}

Multiplying $[\VV]^{(n)}$ in \eqref{goodnn} and integrate by parts, we get
\begin{align*}
\frac{1}{2}\frac{d}{dt}\left\|\VV^{(n)}\right\|_0^2&=\io [\QQ]^{(n)}\left(\nabla_{\ak^{(n)}}\cdot[\VV]^{(n)}-\p_{\mu}\ak^{\mu\alpha}[\VV]^{(n)}_{\alpha}\right)\dy+\io (\FF^{(n)}-\nabla_{[\ak]^{(n)}}\QQ^{(n)})\cdot[\VV]^{(n)}\dy\\
&~~~~-\ig [\QQ]^{(n)}\ak^{(n)3\alpha}[\VV]^{(n)}_{\alpha}\dS.
\end{align*}

Similarly as in Section \ref{tgspace}, we are able to control the first three terms by using $[Q]=[q]+\frac{1}{2}[|b|^2]$
\[
-\frac{1}{2}\frac{d}{dt}\left\|\TP^4[q]^{(n)}\right\|_0^2+\PP_0 P([\EE]^{(n),(n-1)}(t)).
\]

For the boundary term, we integrate $\TP^{1/2}$ by parts as in \eqref{LI0} to get
\begin{align*}
&~~~~-\ig [\QQ]^{(n)}\ak^{(n)3\alpha}[\VV]^{(n)}_{\alpha}\dS \\
&=\ig \p_3 [Q]^{(n)}\ak^{(n)3\alpha}[\VV]^{(n)}_{\alpha}\left(\TP^3\ek^{(n)}_{\beta}\ak^{(n)3\beta}+\TP^3[\ek]^{(n-1)}_{\beta}\ak^{(n)3\beta}+\TP^3\ek^{(n-1)}_{\beta}[\ak]^{(n)3\beta}\right) \\
&\lesssim  |[\VV]^{(n)}|_{\dot{H}^{-0.5}}\left(\frac{1}{\kk}\PP_0\left|[\eta]^{(n-1)}\right|_{2.5}+\left\|[\ak]\right\|_2\right).
\end{align*}This finalizes the tangential estimates.

\subsubsection*{Step 6: Elliptic estimates of $[\p_tq]^{(n)}$}

Since $[q]^{(n)}$ vanishes on the boundary, we can still use Lemma \ref{GLL}, the elliptic estimates to reduce the spatial derivative to time derivative by replacing the Laplacian term with $\p_t^2$ plus source terms. We only list the wave equation of $[q]^{(n)}$ and omit the computation.
\begin{equation}\label{diffqnn}
\begin{aligned}
-\Jrk^{(n)}R'(q^{(n)})\p_t^2[q]^{(n)}-\Delta_{\ak^{(n)}}[q]^{(n)}=&\frac12\Delta_{\ak^{(n)}}[|b|^2]^{(n)}\\
-&\dive_{\ak^{(n)}}\left((b^{(n)}\cdot\nabla_{\ak^{(n)}})[b]^{(n)}\right)+\p_t\left(\Jrk^{(n)}R'(q^{(n)})\right)\p_t[q]^{(n)}\\
+&\dive_{\ak^{(n)}}\left(\nabla_{\Ak^{(n)}}Q^{(n)}-(b^{(n)}\cdot\nabla_{[\Ak]^{(n)}})b^{(n)}-([b]^{(n-1)}\cdot\nabla_{\Ak^{(n-1)}})b^{(n)}\right)\\
-&\left(\dive_{[\ak]^{(n)}}v^{(n)}+[\Jrk R'(q)]^{(n)}\p_t q^{(n)}\right).
\end{aligned}
\end{equation}

\subsubsection*{Step 7: Common control of heat and wave equations}

Differentiate $\p_t^3$ in \eqref{diffbn} and \eqref{diffqnn}, we are able to get similar estimates of $[W]^{(n+1)}$ and $H^{(n+1)}$ as in Section \ref{noncommon}. We omit the proof here.

Finally, we conclude that
\[
[\EE]^{(n+1)}\lesssim_\kk\PP_0 T^2\left( [\EE]^{(n)}+[\EE]^{(n-1)}\right),
\] where we pick $T_\kk$ suitably small such that the coefficient $\leq 1/4$. This ends the proof of Proposition \ref{diffconv} and Corollary \ref{lwpkk}.

\section{Local well-posedness of the original system}

As stated in Corollary \ref{lwpkk}, the local well-posedness of the nonlinear approximation system \eqref{nonlinearkk} is established in an $\kk$-dependent time interval $[0,T_\kk]$. Combining the uniform-in-$\kk$ nonlinear a priori estimates Proposition \ref{nonlinear1}, we know that there exists a $\kk$-\textbf{independent} time $T_1>0$, such that the local existence of the solution $(\eta,v,b,q)$ to the original equation \eqref{CRMHDL} holds in $[0,T_1]$ by letting $\kk\to 0$. It remains to prove the uniqueness of the solution. Let us recall the original equation first
\[
\begin{cases}
\p_t \eta=v &~~~\text{in } \Omega, \\
\rho_0J^{-1}\p_t v=(b\cdot\pa) b-\pa Q,~~Q=q+\frac{1}{2}|b|^2 &~~~\text{in } \Omega, \\
\frac{J R'(q)}{\rho_0}\p_tq+\dive_a v=0 &~~~\text{in } \Omega, \\
\p_t b-\Delta_a b=(b\cdot\pa) v-b\dive_{a}v, &~~~\text{in } \Omega, \\
\dive_a b=0 &~~~\text{in } \Omega, \\
q=0,~b=\mathbf{0},~-\p_3 Q|_{t=0}\geq c_0>0 &~~~\text{on } \Gamma, \\
(\eta,v, b,q)|_{\{t=0\}}=(\text{Id},v_0, b_0,q_0).
\end{cases}
\]

Suppose $(\eta^i,v^i.b^i,q^i),~i=1,2$ solves \eqref{CRMHDL} with the same initial data $(\text{Id},v_0, b_0,q_0)$. Then we consider the system of $([\eta],[v],[b],[q])$ by setting $[f]:=f^1-f^2$. Then we have

The flow map:
\[
\p_t [\eta]=[v].
\]

The momentum equation:
\[
\rho_0(J^1)^{-1}\p_t [v]=(b^1\cdot\nabla_{a^1}) [b]-\nabla_{a^1}[Q] -\rho_0[J^{-1}]\p_t v^2+(b^1\cdot\nabla_{[a]})b^2+[b]\cdot\nabla_{a^2}b^2-\nabla_{[a]}Q^2.
\]

The continuity equation:
\[
\frac{J^1 R'(q^1)}{\rho_0}\p_t[q]+\dive_{a^1} [v]=\left[\frac{J R'(q)}{\rho_0}\right]\p_t q^2-\dive_{[a]}v^2 ~~~\text{in } \Omega.
\]

The equation of magnetic field:

\begin{align*}
\p_t [b]-\Delta_{a^1} [b]=&(b^1\cdot\nabla_{a^1}) [v]-b\dive_{a}[v]\\
&+\dive_{a^1}\left(\nabla_{[a]}b^2\right)+\dive_{[a]}\left(\nabla_{a^2}b^2\right)\\
&+(b^1\cdot\nabla_{[a]}) v^2+([b]\cdot\nabla_{a^2})v^2\\
&-b^1\dive_{[a]}v^2-[b]\dive_{a^2}v^2,
\end{align*}
and
\[
\dive_{a^1}[b]=-\dive_{[a]}b^2.
\]

The boundary conditions:
\[
[q]=0,~[b]=\mathbf{0},~-\p_3Q_1\text{ and }-\p_3Q_2|_{t=0}\geq c_0>0,
\] 
and zero initial data.

Define the energy functional
\[
[\EE](T):=[E](T)+[H](T)+[W](T)+\left\|\p_t^{2-k}\left(\left(b^1\cdot\nabla_{a^1}\right)[b]\right)\right\|_{k}^2,
\]
where
\begin{align*}
[E](t)&:=\left\|[\eta]\right\|_2^2+\left|\ak^{3\alpha}\TP^2[\eta]_{\alpha}\right|_0^2+\sum_{k=0}^2 \left(\left\|\p_t^{2-k}[v]\right\|_{k}^2+\left\|\p_t^{2-k}[b]\right\|_{k}^2+\left\|\p_t^{2-k} [q]\right\|_k^2\right),\\
[H](T)&:=\int_0^T\io\left|\p_t^3 [b]\right|^2\dy\dt+\left\|\p_t^2 [b]\right\|_1^2,\\
[W](T)&:=\left\|\p_t^3 [q]\right\|_0^2+\left\|\p_t^2 [q]\right\|_1^2.
\end{align*}

The energy estimate of $[\EE]$ is almost the same as $\EE_\kk$ except that $[Q]$ no longer satisfies Taylor sign condition. So what we need to do is to investigate the boundary integral $$\ig [\QQ](a^{1})^{3\alpha} [\VV]_{\alpha}\dS,$$ where we define the Alinhac good unknowns 
\[
\VV^i=\TP^2v^i-\TP^2\eta^i\cdot\nabla_{a^i} v^i,~~\QQ^i=\TP^2Q^i-\TP^2\eta^i\cdot\nabla_{a^i} Q^i, 
\]and
\[
[\VV]:=\VV^1-\VV^2,~~[\QQ]:=\QQ^1-\QQ^2.
\]
 The boundary terms then becomes
\begin{align*}
\ig [\QQ](a^{1})^{3\alpha} [\VV]_{\alpha}&=-\ig\p_3[Q]\TP^2\eta^2_{\beta}(a^2)^{3\beta}(a^2)^{3\alpha}[\VV]_{\alpha}\dS-\ig \p_3 Q^1(\TP^2[\eta]_{\beta}(a^1)^{3\beta}+\TP^2\eta^2_{\beta}[a]^{3\beta})(a^1)^{3\alpha}[\VV]_{\alpha}\dS \\
&\lesssim -\frac{1}{2}\frac{d}{dt}\ig \p_3 Q^1|(a^{1})^{3\alpha}\TP^2[\eta]_{\alpha}|_0^2\dS\\
&~~~~-\ig\p_3 Q^1(a^1)^{3\gamma}\TP^2[\eta]_{\gamma}(\TP^2\eta^2_{\beta}[a]^{\mu\beta}\p_\mu v^1_{\alpha}-\TP^2\eta^2_{\beta}(a^2)^{\mu\beta}\p_\mu[v]_{\alpha})(a^1)^{3\alpha}\dS \\
&~~~~-\ig \p_3 Q^1(\TP^2[\eta]_{\beta}(a^1)^{3\beta}+\TP^2\eta^2_{\beta}[a]^{3\beta})(a^1)^{3\alpha}[\VV]_{\alpha}\dS\\
&\lesssim-\frac{c_0}{2}\frac{d}{dt}\ig |(a^{1})^{3\alpha}\TP^2[\eta]_{\alpha}|_0^2\dS+P(\text{initial data})P([\EE](t)).
\end{align*}
Here in the second step we use the precise formula of $[\VV]$, and in the third step we use Taylor sign condition for $Q^1$. Thus similarly we get
\[
\sup_{t\in[0,T_1]}[\EE](t)\leq \text{initial data}+\int_0^{T_0}P([\EE](t))\dt.
\] Since the initial data of the system of $([\eta],[v],[b],[q])$ is 0, we know $[\EE](t)=0$ for all $t\in[0,T_1]$.

Conclusively, the local well-posedness of the free-boundary compressible resistive MHD system \eqref{CMHD} is established in Lagrangian coordinates \eqref{CRMHDL} with Sobolev initial data.

\end{document}